\documentclass[11pt, leqno]{amsart}
\setlength{\textwidth}{14.9cm}
\setlength{\textheight}{21.3cm}
\hoffset=-35pt
\usepackage{amsfonts,amssymb, amscd}
\usepackage{amsmath}
\usepackage{lmodern}
\usepackage{mathrsfs}
\usepackage{amsthm}
\usepackage{qsymbols}
\usepackage{mathtools}
\mathtoolsset{showonlyrefs}
\usepackage{dsfont}
\usepackage{graphicx}
\usepackage{latexsym}
\usepackage{chngcntr}
\usepackage[noadjust]{cite}
\usepackage{paralist}
\usepackage[parfill]{parskip}
\usepackage{bm}
\usepackage{tikz-cd}
\usepackage{nicefrac}
\usepackage{float}
\usepackage{csquotes}

\usepackage{xcolor}

\usepackage[shortlabels]{enumitem}
\setlist[enumerate,1]{label = \normalfont(\roman*), ref = (\roman*)}
\newtheorem{theorem}{Theorem}[section]
\newtheorem{lemma}[theorem]{Lemma}
\newtheorem{proposition}[theorem]{Proposition}
\newtheorem{corollary}[theorem]{Corollary}

\theoremstyle{definition}
\newtheorem{definition}[theorem]{Definition}
\newtheorem{remark}[theorem]{Remark}
\newtheorem{example}[theorem]{Example}
\newtheorem{agreement}{Agreement}
\usepackage[plainpages=false,pdfpagelabels,
backref=page,
citecolor=red]{hyperref}
\usepackage{xcolor}
\hypersetup{
 colorlinks,
 linkcolor={cyan!90!black},
 citecolor={magenta},
 urlcolor={green!40!black}
} %
\usepackage{doi}
\usepackage{underscore}
\newcommand{\R}{\mathbb{R}}
\newcommand{\C}{\mathbb{C}}
\renewcommand{\L}{\mathrm{L}}
\newcommand{\Lloc}{\mathrm{L}_{\mathrm{loc}}}
\newcommand{\Lx}{\mathrm{L}_{\vphantom{t}x}}
\newcommand{\Wx}{\mathrm{W}_{\vphantom{t}x}}
\newcommand{\Hx}{\mathrm{H}_{\vphantom{t}x}}
\newcommand{\Lt}{\mathrm{L}_t}
\newcommand{\Wt}{\mathrm{W}_t}
\newcommand{\Cont}{\mathrm{C}}
\newcommand{\Contx}{\mathrm{C}_{\vphantom{t}x}}

\newcommand{\Contt}{\mathrm{C}_t}

\newcommand{\A}{\mathrm{A}}
\newcommand{\e}{\mathrm{e}}

\renewcommand{\d}{\,\mathrm{d}}

\newcommand{\I}{\mathrm{I}}
\newcommand{\II}{\mathrm{II}}

\newcommand{\eps}{\varepsilon}
\newcommand{\B}{\mathrm{B}}
\newcommand{\Q}{\mathrm{Q}}
\renewcommand{\H}{\mathrm{H}}

\newcommand{\E}{\mathcal{E}} %

\newcommand{\Sec}{\mathrm{S}}
\newcommand{\cB}{\mathcal{B}}
\newcommand{\cC}{\mathcal{C}}

\newcommand{\cL}{\mathcal{L}}

\newcommand{\FT}{\mathcal{F}}

\newcommand{\Rbound}{\mathcal{R}}
\newcommand{\SP}{\, |\,}
\newcommand{\MO}{\mathcal{M}}

\newcommand{\z}{\mathbf{z}}
\renewcommand\Re{\operatorname{Re}}

\newcommand{\ind}{\mathbf{1}}

\renewcommand{\tt}{{t^*}}

\newcommand{\SetSep}{\; ; \;}

\DeclareMathOperator{\dist}{d}

\DeclareMathOperator{\dom}{D}

\let\div\olddiv
\DeclareMathOperator{\div}{div}
\def\Xint#1{\mathchoice
{\XXint\displaystyle\textstyle{#1}}%
{\XXint\textstyle\scriptstyle{#1}}%
{\XXint\scriptstyle\scriptscriptstyle{#1}}%
{\XXint\scriptscriptstyle%
\scriptscriptstyle{#1}}%
\!\int}
\def\XXint#1#2#3{{\setbox0=\hbox{$#1{#2#3}{%
\int}$ }
\vcenter{\hbox{$#2#3$ }}\kern-.6\wd0}}
\def\barint{\,\Xint-} %
\newcommand{\sysalpha}{k} %
\newcommand{\sysbeta}{\ell}
\newcommand{\shift}{\kappa} %
\newcommand{\M}{M} %

\hyphenation{maxi-mal}

\title[Weighted non-autonomous $\L^q(\L^p)$ maximal regularity for complex systems]{Weighted non-autonomous $\boldsymbol{\L^q(\L^p)}$ maximal regularity\\ for complex systems under mixed regularity\\ in space and time}
\author{Sebastian Bechtel}
\address{Delft Institute of Applied Mathematics, Delft University of Technology, P.O. Box 5031, 2600 GA Delft, The Netherlands}
\email{s.bechtel@tudelft.nl}

\makeatletter
\@namedef{subjclassname@2020}{\textup{2020} Mathematics Subject Classification}
\makeatother

\subjclass[2020]{Primary: 35B65. Secondary: 35J47, 35B45, 47D06.}
\dedicatory{}
\keywords{non-autonomous maximal regularity, Lions problem, second-order elliptic systems, commutator estimates, weighted estimates, pseudo differential operators}

\hypersetup{
pdftitle={Weighted non-autonomous L^q(L^p) maximal regularity for complex systems under mixed regularity in space and time},
pdfauthor={Sebastian Bechtel},
pdfcreator={LaTeX with hyperref (gitREF, gitSHA)},
pdfkeywords={non-autonomous maximal regularity, Lions problem, second-order elliptic systems, commutator estimates, weighted estimates, pseudo differential operators},
pdfsubject={MSC2020 Primary: 35B65. Secondary: 35J47, 35B45, 47D06.},
}

\begin{document}
\begin{abstract}
We show weighted non-autonomous $\L^q(\L^p)$ maximal regularity for families of complex second-order systems in divergence form under a mixed regularity condition in space and time. To be more precise, we let $p,q \in (1,\infty)$ and we consider coefficient functions in $\Contt^{\beta + \varepsilon}$ with values in $\Contx^{\alpha + \varepsilon}$ subject to the parabolic relation $2\beta + \alpha = 1$.  If $p < \nicefrac{d}{\alpha}$, we can likewise deal with spatial $\Hx^{\alpha + \eps, \nicefrac{d}{\alpha}}$ regularity. The starting point for this result is a weak $(p,q)$-solution theory with uniform constants. Further key ingredients are a commutator argument that allows us to establish higher a priori spatial regularity, operator-valued pseudo differential operators in weighted spaces, and a representation formula due to Acquistapace and Terreni. Furthermore, we show $p$-bounds for semigroups and square roots generated by complex elliptic systems under a minimal regularity assumption for the coefficients.
\end{abstract}
\maketitle

\section{Introduction}
\label{Sec: Introduction}

Fix a finite time $T>0$ and a dimension $d \geq 1$. Dong and Kim studied in a series of articles~\cite{Dong-Kim-ARMA, Dong-Kim-JFA, Dong-Kim-CoV, Dong-Kim-TAMS} solvability of the parabolic system in divergence form
\begin{align}
	\label{Eq: problem introduction DK}
	\partial_t u - \div_x A(t,x) \nabla_x u + \kappa u = f + \div_x F \quad \mathrm{in} \quad (-\infty, T) \times \R^d.
\end{align}
Here, $\kappa > 0$ is sufficiently large and $A \colon (-\infty,T) \times \R^d \to \C^{dm\times dm}$ satisfies $\| A \|_\infty \leq \Lambda$ and is elliptic in the following sense: there exists $\lambda > 0$ such that
\begin{align}
	\label{Eq: ellipticity}
	\sum_{\sysalpha,\sysbeta=1}^m \Re (A(t,x)^{\sysalpha \sysbeta} \xi^\sysalpha \SP \xi^\sysbeta) \geq \lambda |\xi|^2 \qquad (\xi \in \C^{dm}).
\end{align}
The number $m$ is the size of the system.
It turns out that a $\textrm{VMO}$ condition for $A$ (made precise in Lemma~\ref{lem: integral inequality}) is sufficient to guarantee unique solvability in the class $\Lt^q(\Wx^{1,p})$ for right-hand sides $f,F \in \Lt^q(\Lx^p)$ with $p,q \in (1,\infty)$. Given a parabolic Muckenhoupt weight, they also treat weighted estimates. We restrict our attention to temporal Muckenhoupt weights $w\in A_q$ in the sequel.

On the finite time interval $(0,T)$ their result implies well-posedness of the problem
\begin{align}
	\label{Eq: problem introduction weak}
	\begin{split}
	\partial_t u - \div_x A(t,x) \nabla_x u &= f + \div_x F \quad \mathrm{in} \quad (0, T) \times \R^d, \\
	u(0) &= 0.
	\end{split}
\end{align}
Observe that the right-hand side is in the class $\Lt^q(w;\Wx^{-1,p})$. It follows from the equation that $\partial_t u \in \Lt^q(w;\Wx^{-1,p})$ likewise. In other words,~\eqref{Eq: problem introduction weak} has maximal regularity over $\Lt^q(w;\Wx^{-1,p})$. If $f=0$, then the right-hand side is from the class $\Lt^q(w;\dot{\mathrm{W}}_x^{-1,p})$ and it follows again from the equation that $\partial_t u \in \Lt^q(w;\dot{\mathrm{W}}_x^{-1,p})$. However, if $F = 0$, then the right-hand side belongs to $\Lt^q(w;\Lx^p)$, but the higher regularity $\partial_t u \in \Lt^q(w;\Lx^p)$ of the time derivative is not known. It is the purpose of this article to investigate under which additional conditions one can show the improved regularity $\partial_t u \in \Lt^q(w;\Lx^p)$ for the problem
\begin{align}
	\label{Eq: non-autonomous equation}
	\tag{P}
	\begin{split}
		\partial_t u - \div_x A(t,x) \nabla_x u &= f, \qquad \mathrm{in} \quad (0, T) \times \R^d, \\
		u(0) &= 0.
	\end{split}
\end{align}
To make the notation more precise, we define for each fixed $\tt \in (0,T)$ an elliptic operator in divergence form in the following way:
consider the bounded sesquilinear form\footnote{Here, $\nabla_x$ denotes the gradient in the variable $x$. For the sake of readability, let us agree to omit the underlying sets $(0,T)$ and $\R^d$ in the notation of function space; instead, we will indicate the underlying set by the indices $t$ and $x$. For instance, we will simply write $\Wx^{1,2}$ instead of $\Wx^{1,2}(\R^d)$ and so on.}
\begin{align}
	a_\tt \colon \Wx^{1,2} \times \Wx^{1,2} \to \C, \qquad a_\tt(u,v) = \int_{\R^d} A(\tt, x) \nabla_x u(x) \cdot \overline{\nabla_x v(x)} \d x.
\end{align}
Using the form  $a_\tt$, we define the operator
\begin{align}
	\cL_\tt \colon \Wx^{1,2}\to \Wx^{-1,2} \quad \text{via} \quad \langle \cL_\tt u, v \rangle_{\Wx^{-1,2},\Wx^{1,2}} = a_\tt(u,v) \qquad (u,v\in \Wx^{1,2}).
\end{align}
Here, $\Wx^{-1,2}$ is the space of conjugate-linear functionals on $\Wx^{1,2}$. Eventually, the operators $-\div_x A(t,x) \nabla_x$ in~\eqref{Eq: non-autonomous equation} are defined by $\cL_t$.

In the case $p=q=2$ and with $w=1$ this question is known as \emph{Lions' maximal regularity problem} and was investigated by many authors~\cite{AO19,AE16,DZ17,Fackler15,HO15,OS10}, see also the survey article~\cite{Lions-Survey}.
For counterexamples that highlight the need of a certain regularity we refer to~\cite{Fackler16,BMV}.
Our main result reads as follows.
\begin{theorem}
	\label{Thm: main result}
	Let $p,q \in (1,\infty)$, $w\in \A_q$, $\alpha,\beta,\eps > 0$ such that $2\beta+\alpha=1$. For the coefficient function $A$, assume that
	\begin{align}
		A \in \begin{cases}
			\Cont_t^{\beta + \eps}(\Hx^{\alpha + \eps, \nicefrac{d}{\alpha}}), & \text{if } p < \nicefrac{d}{\alpha}, \\
			\Cont_t^{\beta + \eps}(\Contx^{\alpha + \eps}), & \text{else}.
		\end{cases}
	\end{align}
	Then, given $f\in \Lt^q(w;\Lx^p)$, the unique weak $(p,q)$-solution $u$ of~\eqref{Eq: non-autonomous equation} satisfies $\partial_t u \in \Lt^q(w;\Lx^p)$ in conjunction with the estimate $$\|\partial_t u \|_{\Lt^q(w;\Lx^p)} \lesssim \| f\|_{\Lt^q(w;\Lx^p)},$$ that is to say, problem~\eqref{Eq: non-autonomous equation} admits weighted maximal regularity.
	Implicit constants only depend on the parameters fixed in Agreement~\ref{Agreement: fix constants} below.
\end{theorem}
\begin{agreement}
	\label{Agreement: fix constants}
	Throughout this article, we consider the numbers $\Lambda$ and $\lambda$, as well as the numbers $\alpha$, $\beta$, and $\eps$ from Theorem~\ref{Thm: main result}, as fixed. Moreover, we reserve the symbol $\M$ for the $\Cont_t^{\beta + \eps}(\Hx^{\alpha + \eps, \nicefrac{d}{\alpha}})$ respectively $\Cont_t^{\beta + \eps}(\Contx^{\alpha + \eps})$-norm of $A$. We refer to the numbers $d$ and $m$ as dimensions, and they are also considered fixed, likewise the integrability parameters $p,q$, and the $\A_q$-weight $w$. Estimates do not depend on $w$ itself but only on an upper bound of its $\A_q$ characteristic $[w]_{\A_q}$, see Definition~\ref{Def: Muckenhoupt weight}.
\end{agreement}

Before we come to a comparison of our main result with the literature, we would like to comment on non-trivial initial values in the following remark first.

\begin{remark}
	By linearity, a non-trivial initial value $u_0$ can be included if we solve the initial value problem
	\begin{align}
		\partial_t u - \div_x A(t,x) \nabla_x u &= 0, \qquad \mathrm{in} \quad (0, T) \times \R^d, \\
		u(0) &= u_0.
	\end{align}
	When $w = 1$, then
	by the perturbation argument presented in~\cite{AO19}, which is applicable only using the regularity condition $A \in \Contt^\eps(\Lx^\infty)$, the above initial value problem is solvable provided $u_0 \in (\Lx^p, \dom(L_0))_{1-\frac{1}{q}, q}$, where $(\cdot,\cdot)_{\theta,r}$ denotes the $(\theta,r)$-real interpolation space and $L_0 \coloneqq -\div_x A(0,\cdot) \nabla_x$. If $w$ is a power weight, see Example~\ref{ex:power weight}, a similar statement can be formulated. We refrain from giving more details on this matter since it is not related to our mixed regularity condition in space and time.
\end{remark}

In the unweighted case, Fackler~\cite{Fackler18} has shown maximal regularity if $A$ is uniformly in $\mathrm{VMO}_x$ and satisfies in addition the regularity condition
\begin{align}
	A \in \begin{cases}
		\Wt^{\nicefrac{1}{2} + \eps,2}(\Lx^\infty), & \text{if } p \leq 2, \\
		\Wt^{\nicefrac{1}{2} + \eps,p}(\Lx^\infty), & \text{else}.
	\end{cases}
\end{align}
His condition is essentially the borderline case $\alpha=0$ and $\beta = \nicefrac{1}{2}$ of the parabolic relation in Theorem~\ref{Thm: main result} when $p\leq 2$. The reason why -- in contrast to Fackler -- we have to work with a H\"{o}lder condition in time will be the presence of the weight (see for instance Lemma~\ref{Lem: weighted convolution}). In the other borderline case $\alpha = 1$ and $\beta = 0$ the domains of the elliptic operators are independent of time. Consequently, maximal regularity follows from perturbation techniques~\cite{PS01}. In this sense, our regularity condition interpolates between previously known sufficient conditions, and extends these results to the time-weighted setting. Weights in time are interesting for non-linear equations with rough initial values~\cite{Critical-Spaces}.

With the same parabolic relation, the unweighted and Hilbertian case on $\R^d$ was treated by Dier and Zacher~\cite{DZ17}. Our spatial regularity condition always coincides with their hypothesis. Using Fackler's bootstrapping argument from~\cite[Thm.~6.4 \& Prop.~5.1]{Fackler18} we should be able to match their temporal regularity hypothesis. Consequently, our approach would recover their unweighted result and extend it to the non-Hilbertian setting.

\subsection{Roadmap}
\label{Subsec: Roadmap}

In this roadmap, we intend to give the reader an extensive overview of our strategy. Our proof follows a classical approach due to Acquistapace and Terreni, but incorporates an a priori improvement of weak solutions in the spatial variable using a commutator argument.

The starting point is a weak solution theory for the generalized problem~\eqref{Eq: shifted non-autonomous equation}. This generalization permits us to use an approximation argument later on. Classically, this is due to Lions in the Hilbertian situation. Fackler used the result of Pr\"uss and Schnaubelt~\cite{PS01} to have a $(p,q)$-version of Lions' result at hand. We cannot do this, as~\cite{PS01} does not yield implied constants that are uniform in the coefficients. However, we will need such a control for the a priori improvement of weak solutions in the spatial variable. We will come back to this at the very end of this roadmap. Hence, instead, we employ a framework of Dong and Kim~\cite{Dong-Kim-TAMS} to treat complex systems in divergence form over spaces of the type $\Lt^q(w;\Wx^{-1,p})$. Another advantage of the result of Dong and Kim are weighted estimates in time for weak solutions. This will be done in Section~\ref{Sec: existence weak solutions}, and consists of relating their notions with ours, as well as verifying an oscillation condition.

As is classical in the Acquistapace--Terreni approach, we derive a representation formula for weak $(p,q)$-solution in Section~\ref{Subsec: AT formula}. Fix $\tt \in (0,T)$. The formula reads
\begin{align}
	u(\tt) &= \int_0^\tt \e^{-(\tt-s) (\cB_\tt + \shift)} \bigl( \cB_\tt - \cB_s \bigr) u(s) \d s + \int_0^\tt \e^{-(\tt-s) (\cB_\tt + \shift)} f(s) \d s,
\end{align}
where the operator $\cB_\tt + \shift$ replaces the operator $\cL_\tt$ when passing from~\eqref{Eq: non-autonomous equation} to~\eqref{Eq: shifted non-autonomous equation} with regularized coefficients.
For maximal regularity, we have to estimate the term $(\cB_\tt + \shift) u(\tt)$. Formally, this leads to the operators
\begin{align}
	S_1(u)(\tt) \mapsto &\int_0^\tt (\cB_\tt + \shift) \e^{-(\tt-s) (\cB_\tt + \shift)} (\cB_\tt - \cB_s) u(s) \d s, \\
	S_2(f)(\tt) \mapsto &(\cB_\tt + \shift) \int_0^\tt \e^{-(\tt-s) (\cB_\tt + \shift)} f(s) \d s.
\end{align}
The commutation between $(\cB_\tt + \shift)$ and the integral in $S_1$ will be justified during the proof of our main result. Consequently, to establish maximal regularity, we have to bound the operators $S_1$ and $S_2$. This is the topic of Section~\ref{Sec: estimates solution formula}. Observe, however, that the operator $S_2$ acts on the data $f$, but $S_1$ acts on the weak $(p,q)$-solution $u$. This has the following effect: for $S_2$, we plainly desire to show $\Lt^q(w;\Lx^p)$-bounds. These will follow from a weighted and operator-valued pseudo-differential operator result. For $S_1$, however, the target space is still $\Lt^q(w;\Lx^p)$, but higher regularity of weak solutions lets us vary the norm of the data space. To be more precise, in the classical approach as employed by Fackler~\cite{Fackler18}, the data space is $\Lt^q(w;\Wx^{1,p})$. The fundamental gain in our approach is that we will replace that data space by the space $\Lt^q(w;\Wx^{1+\alpha,p})$. This has the effect that less restrictive kernel bounds for $S_1$ compared to~\cite{Fackler18} suffice. We give more details on this in a moment.

Let us come back to the operator $S_2$. The classical approach is to rewrite this operator as a pseudo-differential operator. This will be presented in Section~\ref{Subsec: S2 bdd}. To do so, we have to restrict to a class of more regular right-hand sides $f$. This is, however, not a restriction, since we can use a standard approximation argument for the equation. This will be explained in Step~1 in the proof of Theorem~\ref{Thm: main result} in Section~\ref{Sec: proof main result}. We emphasize that this approximation argument does not rely, yet, on the explicit control of implicit constants for weak $(p,q)$-solutions. Eventually,~\cite{PS06} leads to boundedness of $S_2$ provided we can verify that $(\tau, s) \mapsto 2\pi i \tau (2\pi i \tau + (\cB_s + \shift))^{-1}$ satisfies some $R$-boundedness and regularity conditions. The precise assumption and its verification are presented in Lemma~\ref{Lem: R-Yamazaki}. This uses two ingredients. First, that the coefficients are $\Contt^\eps(\Lx^\infty)$. Second, that the operators $(\cB_\tt + \shift)$ are jointly $R$-sectorial. Let us remark that the results in~\cite{PS06} are not weighted, but we will explain the necessary changes.

Uniform $R$-sectoriality is treated in Section~\ref{Subsec: uniform R-bounds}. On the one hand, we have to carefully trace the constants in well-known results on $R$-boundedness (more precisely, the approach based on off-diagonal bounds from~\cite{KW}). On the other hand, we combine the elliptic solvability theory of Dong and Kim (see Proposition~\ref{Prop: DK elliptic}) with recent advances around the Kato square root property~\cite{Lp-Kato} to eventually prove $\Lx^p$-boundedness for the semigroup generated by $-(\cB_\tt + \shift)$ with uniform constants in Theorem~\ref{Thm: Lp bounds semigroup}. This result is complemented by further insights on elliptic operators with minimal spatial regularity in Section~\ref{Sec: Elliptic}. In contrast to~\cite{Fackler18}, we are able to also treat complex systems. This is because we do not rely on the Gaussian bounds from~\cite{Auscher-Tchamitchian} anymore.

We come back to the operator $S_1$. As already mentioned, the plan is to show the boundedness $$S_1 \colon \Lt^q(w;\Wx^{1+\alpha,p})\to \Lt^q(w;\Lx^p).$$ This will turn out to be sufficient owing to the a priori estimate $\| u \|_{\Lt^q(w;\Wx^{1+\alpha,p})} \lesssim \| f \|_{\Lt^q(w;\Lx^p)}$ for weak solutions -- this is the higher spatial regularity that was already alluded before. The (weighted) boundedness for $S_1$ follows from a good bound of convolution type for its integral kernel (this is the reason for the Hölder condition in time), and Lemma~\ref{Lem: weighted convolution}. The kernel bound is established in Lemmas~\ref{Lem: estimate kernel of S1 part 1} and~\ref{Lem: estimate kernel of S1 part 2}. Lemma~\ref{Lem: estimate kernel of S1 part 1} is in some sense the central ingredient of this paper, as it is the only result that uses the full mixed regularity in time and space. There, we use the spatial regularity of our coefficients to have $\Wx^{\alpha,p}$-multipliers at our disposal (Lemma~\ref{Lem: multiplier}), which eventually leads to estimates against $\Wx^{1+\alpha,p}$. The spatial Sobolev condition for the coefficients is optimal (up to an $\eps$) for this multiplier result.

The missing piece is the higher spatial regularity of weak solutions, the subject of Section~\ref{Sec: higher regularity}. Recall for this that the $\Wx^{1+\alpha,p}$-norm can be given by $\| \cdot \|_{\Lx^p} + \| \partial^\alpha_x \cdot \|_{\Wx^{1,p}}$, where $\partial^\alpha_x$ is the fractional derivative of order $\alpha$. Our plan is to control the latter term by showing that $\partial^\alpha_x u(t,x)$ is a weak $(p,q)$-solution for some admissible right-hand side. Formally, one has
\begin{align}
\label{Eq: formal commutator argument intro}
	\partial_t (\partial^\alpha_x u) - \div_x B(t,x) \nabla_x (\partial^\alpha_x u) + \shift (\partial^\alpha_x u) = \partial^\alpha_x f - \div_x [B(t,\cdot), \partial^\alpha_x] \nabla_x u.
\end{align}
Then, the right-hand side is in $\Lt^q(w;\Wx^{-1,p})$ if the commutator $$[B(t,\cdot), \partial^\alpha_x] \coloneqq B(t,\cdot) \partial^\alpha_x - \partial^\alpha_x B(t,\cdot)$$ is $\Lt^q(w;\Lx^p)$-bounded (up to some absorption term in the case of $\Hx^{\alpha+\eps,\nicefrac{d}{\alpha}}$ coefficients). Owing to the spatial regularity of the coefficients, the latter fact is true according to Lemma~\ref{Lem: commutator estimate}. Nevertheless, there remain some technical difficulties. In the first place, $u$ is only in $\Lt^q(w;\Wx^{1,p})$, so neither can we plug $\partial^\alpha_x u$ into the equation, nor can we justify the necessary calculations to show~\eqref{Eq: formal commutator argument intro}. The way out are an approximation argument in which we use regularized coefficients in conjunction with the difference quotient method (see Steps~1 and~2 in the proof of Proposition~\ref{Prop: higher regularity}), and the fact that on the whole space $\partial_x^\alpha$ and $\nabla$ commute. Note that this step also excludes spatial weights, since then the norm would not be translation invariant anymore. Afterwards, when we want to take the limit in order to get back to our original equation, it is crucial to have control over the implied constants in the weak $(p,q)$-solution theory from Theorem~\ref{Thm: weak solutions} in terms of the coefficients.

\subsection*{Notation} The finite time $T > 0$ and dimension $d \geq 1$ as well as system size $m$ were already fixed in the introduction. The variables $x$ and $t$ are supposed to be quantified over $\R^d$ and $(0,T)$, respectively. By $\tt$ we indicate a fixed (but arbitrary) number in $(0,T)$. For $\varphi \in (0,\pi)$ write $\Sec_\varphi \coloneqq \{ z \in \C \setminus \{ 0 \} \colon |\arg(z)| < \varphi \}$ for the open sector of opening angle $\varphi$ around the positive real axis; also put $\Sec_0 \coloneqq (0,\infty)$. Write $\z \colon z \mapsto z$ for the identity map. It will be clear from the context on which set $\z$ is defined, usually on an open sector. If $T$ is an operator admitting a functional calculus, we write $f(T)$ or $[f](T)$ for the operator $T$ plugged into the function $f$ via its functional calculus. Often, $f$ is defined by an expression that involves the function $\z$, for instance $f = \z (1 + \z)^{-1}$.

\section*{Acknowledgments}

The author was partially supported by the Studienstiftung des deutschen Volkes, the ANR project RAGE: ANR-18-CE-0012-01, and the Humboldt foundation.
The author thanks Moritz Egert for hospitality and valuable discussions on the topic during a stay in Orsay in 2019.
The author thanks Fabian Gabel and Hannes Meinlschmidt for discussions on the topic.
Finally, the author thanks the anonymous referee for their remarks.

\section{Function spaces and weights}
\label{Sec: spaces and weights}

In this section, we review some facts from function space theory and Muckenhoupt weights, thereby introducing also our notation and some further conventions. However,
we assume that the reader is familiar with standard function space and weighted theory. For further background, the reader can, for instance, consult the monographs~\cite{Triebel} for function spaces and~\cite{RubioDeFrancia} for Muckenhoupt weights.

\subsection{Spatial smoothness spaces}

For $s\in \R$ and $p\in (1,\infty)$, write $\Hx^{s,p}$ for the Bessel potential space of order $s$ and integrability $p$. For a positive integer $k$ one has $\Hx^{k,p} = \Wx^{k,p}$. We also put $\Wx^{s,p} \coloneqq \Hx^{s,p}$. Our convention is that we use the $\Wx$-scale to denote regularity of solutions, and the $\Hx$-scale to measure regularity of coefficients. The fractional Sobolev spaces respect the usual lifting property~\cite[Sec.~2.3.4]{Triebel}. Also, the $\Lx^2$ inner product extends to a duality pairing between the spaces $\Hx^{s,p}$ and $\Hx^{-s,p'}$. Moreover, the $\Hx^{s,p}$ spaces interpolate naturally by means of the complex interpolation method due to Calder\'{o}n--Lions.

Introduce the functional
\begin{align}
	S^\alpha f(x) \coloneqq \Bigl( \int_0^\infty \Bigl( \int_{|y| \leq 1} |f(x+ry) - f(x)| \d y \Bigr)^2 \frac{\d r}{r^{1+2\alpha}} \Bigr)^\frac{1}{2}.
\end{align}
If $0 < \alpha < 1$, then the space $\Hx^{\alpha,p}$ consists of all $f\in \Lx^p$ such that $S^\alpha f \in \Lx^p$, and $f\mapsto \| f \|_{\Lx^p} + \| S^\alpha f \|_{\Lx^p}$ defines an equivalent norm on $\Hx^{\alpha,p}$, see~\cite[Thm.~2.3]{Strichartz}. This leads to the following multiplier result.
\begin{lemma}[Multiplier on fractional Sobolev spaces]
\label{Lem: multiplier}
	Let $p\in (1,\infty)$, $0<\alpha<1$, and $\eps > 0$. Let $X = \Hx^{\alpha+\eps,\nicefrac{d}{\alpha}}$ if $p < \nicefrac{d}{\alpha}$ and $X = \Contx^{\alpha+\eps}$ otherwise. Then functions in $X$ are multipliers on $\Wx^{\alpha,p}$ and one has the estimate
	\begin{align}
		\| mf \|_{\Wx^{\alpha,p}} \lesssim \| m \|_X \| f \|_{\Wx^{\alpha,p}},
	\end{align}
	where the implicit constant depends on $\alpha$, $p$, $\eps$, and dimension.
\end{lemma}
\begin{proof}
	We appeal to the aforementioned characterization. First, $\| mf \|_{\Lx^p} \leq \| m \|_{\Lx^\infty} \| f \|_{\Lx^p}$, and $\| m \|_{\Lx^\infty} \lesssim \| m \|_X$ is clear when $X$ is a H\"older space, and follows from the (fractional) Sobolev embedding theorem when $X$ is a Sobolev space.

	Next, an expansion of $S^\alpha (mf)(x)$ and the triangle inequality show $$S^\alpha (mf)(x) \leq \| m \|_{\Lx^\infty} S^\alpha f(x) + |f(x)| S^\alpha m(x),$$ compare with~\cite[Thm.~2.1]{Strichartz}. The first term can be estimated with the arguments from the beginning of the proof, this time using $S^\alpha f \in \Lx^p$. For the second term, we distinguish cases for $X$.

	\textbf{Case 1}: $X=\Contx^{\alpha+\eps}$.
	In the definition of $S^\alpha m(x)$, we split the integral in $r$ at height $1$. If $r \leq 1$, we use the H\"older regularity of $m$, to estimate this part by a constant (independent of $x$). Similarly, when $r \geq 1$, we use boundedness of $m$. In summary, $S^\alpha m(x)$ is bounded by a constant depending linearly on $\| m \|_{\Contx^{\alpha+\eps}}$, which concludes this case.

	\textbf{Case 2}: $X=\Hx^{\alpha+\eps,\nicefrac{d}{\alpha}}$.
	With the relation $\nicefrac{1}{p} - \nicefrac{\alpha}{d} \eqqcolon \nicefrac{1}{q}$ (observe that $q$ is finite by hypothesis on $p$), we use H\"older's inequality to give $\| f S^\alpha m \|_{\Lx^p} \leq \| f \|_{\Lx^q} \| S^\alpha m \|_{\Lx^{\nicefrac{d}{\alpha}}}$. By choice of $q$, one has the Sobolev embedding $\| f \|_{\Lx^q} \lesssim \| f \|_{\Hx^{\alpha, p}}$, which concludes the proof.
\end{proof}

\begin{definition}
\label{Def: fractional derivative}
	The operator $\partial^\alpha_x$ is defined as the (unbounded) Fourier multiplication operator on $\Lx^2$ with symbol $|\xi|^\alpha$. It extrapolates\footnote{Here, this means that $\partial^\alpha_x$ extends from $\Wx^{\alpha,p} \cap \Wx^{\alpha,2}$ to a bounded operator $\Wx^{\alpha,p} \to \Lx^p$ by continuity.} to a bounded operator $\Wx^{\alpha,p} \to \Lx^p$ and we keep writing $\partial^\alpha_x$.
\end{definition}

The mapping $f\mapsto \| f \|_{\Lx^p} + \| \partial_x^\alpha \|_{\Lx^p}$ yields another equivalent norm on $\Wx^{\alpha,p}$, see ~\cite[p.~133]{Stein}.

Sometimes, we also use the Besov spaces $\B^{s}_{p,p}$ with $s\geq 0$. They consist of all functions $f$ in $\L^p$ such that the norm
\begin{align}
	\| f \|_{\B^{s}_{p,p}} \coloneqq \| f \|_{\Lx^p} + \Bigl( \int_{\R^d} \int_{\R^d} \left| \frac{f(y)-f(x)}{|y-x|^{s}} \right|^p \frac{\d y \d x}{|y-x|^d} \Bigr)^\frac{1}{p}
\end{align}
is finite. By real interpolation, one has for $t > s \geq 0$ the continuous inclusion $\Hx^{t,p} \subseteq \B^{s}_{p,p}$.

\subsection{Muckenhoupt weights and parabolic spaces}

\begin{definition}[Muckenhoupt weights]
\label{Def: Muckenhoupt weight}
	Let $q \in (1, \infty)$. A locally integrable function $w \colon \R \to [0, \infty)$ is a \emph{Muckenhoupt weight} for $q$, write $w\in \A_q$, if the quantity
	\begin{align}
		[w]_{\A_q} \coloneqq \sup_I \left( \frac{1}{|I|} \int_I w \d x \right) \left( \frac{1}{|I|} \int_I w^{-\frac{1}{q-1}} \d x \right)^{q-1}
	\end{align}
	is finite, where the supremum is taken over all intervals $I \subseteq \R$. If $q$ is clear from the context, define the \emph{dual weight} to $w$ by $w' \coloneqq w^{-\frac{1}{q-1}}$.
\end{definition}

\begin{example}[Power weights]
	\label{ex:power weight}
	Let $q\in (1, \infty)$ and $-1 < \kappa < q-1$. Consider the weight $w(t) = t^\kappa$. Then $w\in \A_q$. Weights of this type are called \emph{power weights}. Such weights are prototypical for the application of our theory in non-linear problems.
\end{example}

Let $X$ be a spatial smoothness space, $q\in (1,\infty)$ and $w\in \A_q$. We consider the weighted parabolic spaces $\Lt^q(w;X) \coloneqq \L^q(0,T, w; X)$ and $\Wt^{1,q}(w;X)$, where the latter space consists of all $u \in \Lt^q(w;X)$ with $\partial_t u$ again in $\Lt^q(w;X)$. Note that functions in $\Lt^q(w;X)$ are locally integrable by the $\A_q$-condition, hence the distributional derivative is well-defined.

If $Y \subseteq X$ is dense, then $\Cont_0^\infty(\R; Y)$ is dense in $\L^q(\R,w;X)$. One has the usual duality relation $(\Lt^{q'}(w';X^*))^* = \Lt^{q}(w;X)$, which extends the pairing between unweighted spaces. The following well-known lemma is a handy substitute for Young's convolution inequality in the context of weighted spaces (here, $X=\C$).

\begin{lemma}
\label{Lem: weighted convolution}
	Let $k \colon \R \to [0, \infty)$ be measurable, radial, decreasing and integrable. Then
	\begin{align}
		|(k \ast f)(x)| \lesssim \| k \|_1 \MO f(x) \qquad (f\in \Lloc^1),
	\end{align}
	where $\MO$ is the maximal operator. In particular, if $q\in (1, \infty)$ and $w\in \A_q$, one has the weighted estimate
	\begin{align}
		\| k \ast f \|_{\Lt^q(\R, w)} \lesssim \| k \|_1 \| f \|_{\Lt^q(\R, w)} \qquad (f\in \Lt^q(\R, w)).
	\end{align}
\end{lemma}

\section{Uniform estimates for elliptic operators}
\label{Sec: Elliptic}

In Section~\ref{Sec: Introduction} we have introduced the elliptic operators $\{ \cL_t\}_{0<t<T}$. We will associate parts in $\Lx^2$ with these operators, and show \emph{uniform} bounds for their associated semigroups and square roots. We will also transfer semigroup bounds to the space $\Wx^{-1,p}$. The cornerstone for the results in this section is the well-posedness result for parabolic systems in divergence form due to Dong and Kim~\cite{Dong-Kim-TAMS}.

\subsection{Elliptic coefficients}

We stay slightly more general here, which will become handy for technical reasons later on, for instance in Section~\ref{Sec: higher regularity}.
That being said, we introduce the following class of regular elliptic coefficients, which includes the coefficients of the non-autonomous problems studied in this article.

\begin{definition}
\label{Def: coefficient class}
	Let $\gamma > 0$ and $N\geq 0$. Denote by $\E(\Lambda, \lambda, \gamma, N)$ the class of \emph{elliptic coefficients} with coefficient bounds $\Lambda$ and $\lambda$ that are $\Cont^\gamma$ with norm at most $N$. More precisely, this class consists of all functions $B\colon \R^d \to \C^{dm \times dm}$ which satisfy
	\begin{align}
		|B(x)| \leq \Lambda \quad \& \quad \sum_{\sysalpha,\sysbeta = 1}^m \Re (B(x)^{\sysalpha \sysbeta} \xi^\sysalpha \SP \xi^\sysbeta) \geq \lambda |\xi|^2 \qquad (\xi \in \C^{dm}),
	\end{align}
	and the regularity condition
	\begin{align}
		\frac{|B(x+h)-B(x)|}{|h|^\gamma} \leq N \qquad (h \in \R^d \setminus \{ 0 \}).
	\end{align}
\end{definition}

\begin{remark}
\label{Rem: coefficient class}
	Note that $A(\tt, \cdot) \in \E(\Lambda, \lambda, \eps, \M)$. In the case $p < \nicefrac{d}{\alpha}$, this follows from embedding results for smoothness spaces, see~\cite[Thm.~2.8.1.~(e)]{Triebel}.
\end{remark}

\subsection{Elliptic systems and weak \texorpdfstring{\boldsymbol{$(p,q)$}}{(p,q)}-solutions}

We associate with a coefficient function $B$ a form and an operator $\Wx^{1,2} \to \Wx^{-1,2}$.

\begin{definition}
\label{Def: b and B}
	Let $B\in \E(\Lambda, \lambda, \gamma, N)$. Define the form
	\begin{align}
		b \colon \Wx^{1,2} \times \Wx^{1,2} \to \C, \qquad b(u,v) = \int_{\R^d} B(x) \nabla_x u(x) \cdot \overline{\nabla_x v(x)} \d x,
	\end{align}
	and associate with it the operator
	\begin{align}
		\cB \colon \Wx^{1,2}\to \Wx^{-1,2} \quad \text{via} \quad \langle \cB u, v \rangle_{\Wx^{-1,2},\Wx^{1,2}} = b(u,v)  \qquad (u,v\in \Wx^{1,2}).
	\end{align}
	The form $b$ is likewise bounded on $\Wx^{1,p} \times \Wx^{1,p'}$, so that $\cB$ is also a bounded operator $\Wx^{1,p} \to \Wx^{-1,p}$. We do not distinguish these objects notation-wise.
\end{definition}

Given a family $\{ \cB_t \}_{0<t<T}$ induced by coefficients $B(t, \cdot) \in \E(\Lambda, \lambda, \gamma, N)$ and a parameter $\shift \in \R$, associate with them the non-autonomous evolution problem
\begin{align}
\label{Eq: shifted non-autonomous equation}
\tag{P'}
	\partial_t u(t) + \cB_t u(t) + \shift u(t) = f(t), \qquad u(0) = 0.
\end{align}
The following definition makes precise what we understand under a solution to~\eqref{Eq: shifted non-autonomous equation}. With the choices $\cB_t = \cL_t$ and $\shift=0$, this clarifies in particular the solution concept for the problem~\eqref{Eq: non-autonomous equation} from the introduction.
\begin{definition}
\label{Def: weak solution}
	Given $f\in \Lt^q(w;\Wx^{-1,p})$, $p,q \in (1,\infty)$, and $\kappa \in \R$, call a function
	$u\in \Lt^q(w;\Wx^{1,p})$
	a \emph{weak $(p,q)$-solution} of~\eqref{Eq: shifted non-autonomous equation},
	if $u(0)=0$,
	and if the integral equation
	\begin{align}\label{Eq: weak pq-solution}
	\tag{IE}
	\begin{split}
		&\int_0^T -\varphi'(s) ( u(s) \SP g ) + \varphi(s) b_s(u(s), g)
		+ \shift \varphi(s)(u(s) \SP g) \d s \\
		&\qquad\qquad
		= \int_0^T \varphi(s) \langle f(s), g \rangle_{\Wx^{-1,p}, \Wx^{1,p'}} \d s
	\end{split}
	\end{align}
	holds for all $\varphi \in \Cont_0^\infty(0,T; \C)$ and $g\in \Cont_0^\infty(\R^d; \C)$.
\end{definition}

\begin{remark}
\label{Rem: weak solution}
We give some more clarifying comments regarding Definition~\ref{Def: weak solution}.
\begin{enumerate}
	\item Functions in $\Lt^q(w)$ with $w\in \A_q$ are locally integrable, hence the pairings in~\eqref{Eq: weak pq-solution} are well-defined.
	\item It follows from duality that a weak $(p,q)$-solution $u$ of~\eqref{Eq: shifted non-autonomous equation} has a weak derivative $\partial_t u$ in $\Lt^q(w;\Wx^{-1,p})$ that coincides with $f(t)-\cB(t)u(t) - \shift u(t)$ for almost all $t$. \label{Item: weak derivative}
	\item\label{it: continuous} A weak $(p,q)$-solution is continuous at $0$ with values in $\Wx^{-1,p}$, which renders the initial condition meaningful. For the weighted case, this is presented in~\cite[Lem.~4.1]{GV17a}.
	\item Existence and uniqueness of weak $(p,q)$-solutions are independent of the parameter $\shift$. Indeed, if $u$ is a weak $(p,q)$-solution to the parameter $\shift$, then $v(t) = \e^{\shift t} u(t)$ is a weak $(p,q)$-solution to the right-hand side $\e^{\shift s} f$ with $\shift=0$, and vice versa. Note that this imports a dependence on $T$ for the implicit constants. \label{Item: shift equation}
	\item The integral equation~\eqref{Eq: weak pq-solution} extends to $g\in \Wx^{1,p'}$ by continuity. \label{Item: more general g}
\end{enumerate}
\end{remark}

The parameter $\shift$ is supposed to be taken sufficiently large (in particular, we tacitly assume $\shift \geq 1$). This is quantified by the results in~\cite{Dong-Kim-TAMS}. In particular, we can ensure ellipticity in this way. We emphasize that the choice of $\shift$ can be made uniform in the quantities mentioned in Agreement~\ref{Agreement: fix constants}.

Let us agree for the rest of this section that $B$ denotes any fixed coefficient function from the class $\E(\Lambda, \lambda, \eps, \M)$. Implicit constants are allowed to depend on $p$, $\Lambda$, $\lambda$, $\eps$, $M$, and dimensions.

As a consequence of ellipticity, there is some $\omega \in [0,\nicefrac{\pi}{2})$ depending on $\Lambda$, $\lambda$, and $\shift$ such that the numerical range of $b + \shift (\cdot \SP \cdot )_2$ is contained in the closed sector $\overline{\Sec}_\omega$ of opening angle $2\omega$. Furthermore, using Definition~\ref{Def: coefficient class} and the Lax--Milgram lemma, $\cB + \shift + \rho$ is invertible for all $\rho \geq 0$. In particular, $\cB + \shift$ is itself invertible as an operator $\Wx^{1,2} \to \Wx^{-1,2}$.

As a consequence of the H\"older regularity of the coefficients, $\cB + \shift$ extrapolates moreover to an isomorphism $\Wx^{1,p} \to \Wx^{-1,p}$ for \emph{all} $p\in (1,\infty)$. The argument divides into two steps. First, the autonomous problem associated with $\cB + \shift$ is well-posed according to~\cite{Dong-Kim-TAMS}. We will give further information on that result and its applicability in our context in Section~\ref{Sec: existence weak solutions}, see in particular Lemma~\ref{lem: integral inequality}.
Second, the well-posedness of the original elliptic problem together with an estimate for its solutions follow by applying a cutoff argument to a stationary solution~\cite[Proof of Thm.~2.2]{Dong-Kim-CoV}. The result can then be summarized as follows.

\begin{proposition}
\label{Prop: DK elliptic}
	Let $p\in (1,\infty)$. The operator $\cB + \shift$ extrapolates to an invertible operator $\Wx^{1,p} \to \Wx^{-1,p}$. Given $f\in \Wx^{-1,p}$, write $u \in \Wx^{1,p}$ for the unique solution to the equation $(\cB + \shift) u = f$. Then, one has the estimate $\| u \|_{\Wx^{1,p}} \lesssim \| f \|_{\Wx^{-1,p}}$.
\end{proposition}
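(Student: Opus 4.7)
The proof follows the strategy outlined just before the statement: one deduces the elliptic estimate from the autonomous parabolic well-posedness of Dong and Kim~\cite{Dong-Kim-TAMS} by a time-cutoff argument applied to a stationary solution, in the spirit of~\cite[Thm.~2.2]{Dong-Kim-CoV}. The Hilbert case $p=2$ has already been noted via Lax--Milgram; the task is to upgrade this to every $p \in (1,\infty)$ with constants uniform in $B \in \E(\Lambda,\lambda,\eps,M)$.

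First I would verify that $B$, viewed as a coefficient constant in time on $\R\times \R^d$, meets the hypotheses of the Dong--Kim parabolic theorem: the $\Cont^\eps$ spatial regularity is ample and no time regularity is required. Invoking their theorem with $\shift$ chosen large enough (as permitted by the paragraph preceding the proposition) yields
\begin{align}
\|v\|_{\Lt^q(\R;\Wx^{1,p})} \lesssim \|(\partial_t + \cB + \shift) v\|_{\Lt^q(\R;\Wx^{-1,p})}
\end{align}
for sufficiently regular $v$, with implicit constant depending only on $\Lambda$, $\lambda$, $\eps$, $M$, $p$, $q$, and the dimensions.

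Given $f \in \Wx^{-1,2}\cap \Wx^{-1,p}$, let $u \in \Wx^{1,2}$ be the Lax--Milgram solution of $(\cB + \shift)u = f$, fix a nontrivial $\eta \in \Cont_0^\infty(\R;\C)$, and set $v(t,x) = \eta(t)u(x)$. Then $v$ satisfies $\partial_t v + (\cB + \shift)v = \eta'(t) u + \eta(t) f$, so the parabolic estimate above gives
\begin{align}
\|\eta\|_{\Lt^q}\,\|u\|_{\Wx^{1,p}} \lesssim \|\eta'\|_{\Lt^q}\,\|u\|_{\Lx^p} + \|\eta\|_{\Lt^q}\,\|f\|_{\Wx^{-1,p}}.
\end{align}
An interpolation inequality $\|u\|_{\Lx^p} \leq \delta\|u\|_{\Wx^{1,p}} + C_\delta \|u\|_{\Wx^{-1,p}}$ followed by absorption of the $\Wx^{1,p}$ term reduces matters to bounding $\|u\|_{\Wx^{-1,p}}$ by $\|f\|_{\Wx^{-1,p}}$, which, for $\shift$ sufficiently large, is the strict accretivity of $\cB + \shift$ that follows from the $p=2$ theory together with a duality pairing (equivalently, from applying the parabolic estimate to the formal adjoint problem and testing against data in $\Wx^{1,p'}$). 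A density argument using that $\Wx^{-1,2}\cap \Wx^{-1,p}$ is dense in $\Wx^{-1,p}$ then promotes existence from the dense subset to all of $\Wx^{-1,p}$, and uniqueness is immediate from the a priori bound.

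The principal obstacle lies not in any single step but in the bookkeeping: every constant in this chain must depend only on $\Lambda$, $\lambda$, $\eps$, and $M$, and not on the particular coefficient $B$. The uniformity built into Dong and Kim's theorem, combined with the choice of a uniformly admissible $\shift$, is precisely what keeps this control in place; without it, the commutator argument in Section~\ref{Sec: higher regularity} could not close.
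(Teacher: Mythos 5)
Your overall strategy---apply the Dong--Kim parabolic theorem to the time-independent coefficient and extract the elliptic estimate by testing with $v(t,x)=\eta(t)u(x)$---is exactly the route the paper indicates (the paper only sketches it, pointing to the proof of \cite[Thm.~2.2]{Dong-Kim-CoV}). The gap is in your treatment of the error term $\eta'(t)u$. After the interpolation inequality and absorption of the $\delta\|u\|_{\Wx^{1,p}}$ term, you reduce everything to proving $\|u\|_{\Wx^{-1,p}}\lesssim\|f\|_{\Wx^{-1,p}}$, and neither justification you offer closes this. The $p=2$ theory controls nothing on the $p$-scale. The duality route amounts to writing $\langle u,g\rangle=\overline{\langle f,(\cB^*+\shift)^{-1}g\rangle}$ for $g\in\Wx^{1,p'}$ and then estimating $\|(\cB^*+\shift)^{-1}g\|_{\Wx^{1,p'}}\lesssim\|g\|_{\Wx^{-1,p'}}$---but that is precisely the statement of the proposition for the adjoint coefficient $B^*$ (which lies in the same class) and the exponent $p'$, i.e., you invoke what you are proving. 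Likewise, "applying the parabolic estimate to the formal adjoint problem" only yields adjoint \emph{parabolic} well-posedness; turning that into the adjoint \emph{elliptic} bound requires the same cutoff argument and hence runs into the same missing ingredient.

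The repair is standard and is what the cited Dong--Kim argument actually does; it bypasses the interpolation step entirely. Use the $\shift$-weighted form of the a priori estimate recorded in \eqref{eq: apriori Dong-Kim}: its left-hand side carries the term $\shift\,\|\eta\|_{\Lt^q}\|u\|_{\Lx^p}$, while the contribution of $\eta'u$ to the right-hand side is $\|\eta'\|_{\Lt^q}\|u\|_{\Lx^p}$. Since $\eta$ is a fixed universal cutoff and $\shift$ may be taken large depending only on the quantities of Agreement~\ref{Agreement: fix constants} (and on the universal number $\|\eta'\|_{\Lt^q}/\|\eta\|_{\Lt^q}$), this term is absorbed directly into the left-hand side. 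Alternatively, replace $\eta$ by the dilates $\eta_k=\eta(\cdot/k)$ and let $k\to\infty$, so that $\|\eta_k'\|_{\Lt^q}/\|\eta_k\|_{\Lt^q}\to 0$ and the error term vanishes in the limit. With either fix, the remaining parts of your argument (density of $\Wx^{-1,2}\cap\Wx^{-1,p}$ in $\Wx^{-1,p}$, uniqueness from the a priori bound applied to arbitrary $\Wx^{1,p}$-solutions, uniformity of all constants in $B$) are sound.
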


\begin{remark}
\label{Rem: compatible}
	The solutions provided by Proposition~\ref{Prop: DK elliptic} are \emph{compatible} to Lax--Milgram solutions in the following sense. Given $f\in \Wx^{-1,p} \cap \Wx^{-1,2}$, let $u$ be the solution in $\Wx^{1,p}$ provided by Proposition~\ref{Prop: DK elliptic}, and $v$ be the solution in $\Wx^{1,2}$ provided by the Lax--Milgram lemma. Then $u$ and $v$ coincide. Indeed, this is a consequence of local compatibility in complex interpolation scales~\cite[Thm.~8.1]{KMM} and the fact that Proposition~\ref{Prop: DK elliptic} provides a solution for \emph{all} $p\in (1,\infty)$.
\end{remark}

\begin{remark}
\label{Rem: DK works with half shift}
        The result in~\cite{Dong-Kim-TAMS} only requires that $\shift$ is larger than a certain threshold quantified by the parameters fixed in Agreement~\ref{Agreement: fix constants}. Hence, to ensure that all results in Section~\ref{Sec: Elliptic} remain true when $\shift$ is replaced by $\nicefrac{\kappa}{2}$, we pick $\shift$ a bit larger for good measure. We will exploit this observation in Section~\ref{Sec: estimates solution formula}.
\end{remark}

\subsection{The elliptic operator on \texorpdfstring{\boldsymbol{$\Lx^2$}}{L2} and mapping properties}
\label{Subsec: elliptic operators}

In virtue of the embedding $\Lx^2 \subseteq \Wx^{-1,2}$,
define the part of $\cB$ in $\Lx^2$ and denote it as an abuse of notation also by the symbol $B$ (it will be clear from the context if $B$ denotes the coefficient function or the part in $\Lx^2$). Of course, the part of $\cB + \shift$ in $\Lx^2$ coincides with $B+\shift$.
One has that $B+\shift$ is a densely defined, invertible, and m-$\omega$-sectorial operator in $\Lx^2$ with domain $\dom(B+\shift)=\dom(B)$. In particular, $-(B+\shift)$ generates a holomorphic semigroup of contractions $\{\e^{-z (B+\shift)} \}_{z\in \Sec_{\nicefrac{\pi}{2}-\omega}}$ on $\Lx^2$. We will tacitly employ some properties of the sectorial functional calculus of $B+\shift$. The reader can consult~\cite[Chap.~7]{Haase} for further background.

Owing to~\cite[Lem.~7.3]{Lp-Kato}, we deduce $\Lx^p$-bounds for the semigroup generated by $-(B+\shift)$ as a consequence of Proposition~\ref{Prop: DK elliptic} and Remark~\ref{Rem: compatible}.

\begin{theorem}
\label{Thm: Lp bounds semigroup}
	Let $p\in (1,\infty)$ and $\varphi \in [0, \nicefrac{\pi}{2}-\omega)$. One has the estimate
	\begin{align}
		\| \e^{-z (B+\shift)} f \|_{\Lx^p} \lesssim \| f \|_{\Lx^p} \qquad (z\in \Sec_\varphi, f\in \Lx^p \cap \Lx^2).
	\end{align}
\end{theorem}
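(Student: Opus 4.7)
The plan is to apply \cite[Lem.~7.3]{Lp-Kato}, so the task reduces to verifying its hypotheses in our setting. That lemma needs two ingredients: (i) the isomorphism property $\cB + \shift \colon \Wx^{1,p} \to \Wx^{-1,p}$ for every $p\in (1,\infty)$, with constants depending only on the quantities fixed in Agreement~\ref{Agreement: fix constants}, and (ii) the compatibility of the resulting solutions with the $\Lx^2$-based Lax--Milgram theory. Item~(i) is supplied by Proposition~\ref{Prop: DK elliptic}, and item~(ii) is Remark~\ref{Rem: compatible}.

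First, I would translate the elliptic isomorphism into $\Lx^p$-resolvent bounds for $B + \shift$. Applying Proposition~\ref{Prop: DK elliptic} to the shifted operator $\cB + \shift + \lambda$, which remains of the same type with ellipticity uniform in $\lambda$ lying in an appropriate sector around the positive real axis, and then interpolating the resulting $\Wx^{1,p}$--$\Wx^{-1,p}$ bound against the embeddings $\Lx^p \hookrightarrow \Wx^{-1,p}$ and $\Wx^{1,p} \hookrightarrow \Lx^p$ (with the correct powers of $\lambda$ in the standard scaling) yields an estimate of the form
\begin{align}
\| (\lambda + B + \shift)^{-1} \|_{\Lx^p \to \Lx^p} \lesssim |\lambda|^{-1}
\end{align}
on a sector of opening strictly larger than $\nicefrac{\pi}{2} + \omega$. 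Compatibility across $p$ (Remark~\ref{Rem: compatible}) ensures that the operator so constructed is the genuine $\Lx^p$-resolvent of the part of $\cB + \shift$ in $\Lx^p$, in particular that all realizations are consistent with the m-$\omega$-sectorial realization on $\Lx^2$.

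Second, once sectoriality of angle $\omega$ on $\Lx^p$ is in hand, the semigroup bound on the full sector $\Sec_\varphi$ with $\varphi < \nicefrac{\pi}{2} - \omega$ follows by the standard Dunford calculus: represent $\e^{-z (B+\shift)}$ as a contour integral along a path $\Gamma \subseteq \C \setminus \overline{\Sec}_\omega$ adapted to $\arg(z)$; the resolvent bound combined with $\int_\Gamma |\e^{-z\lambda}| |\lambda|^{-1} \, |\d \lambda| \lesssim 1$ produces the desired inequality for $f \in \Lx^p \cap \Lx^2$.

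The main obstacle is the passage from the $\Wx^{1,p} \to \Wx^{-1,p}$ isomorphism to a quantitative $\Lx^p$-resolvent estimate with the correct $|\lambda|^{-1}$ decay uniformly along an open sector; this is the technical content packaged into \cite[Lem.~7.3]{Lp-Kato}, which is precisely why a direct citation suffices once the hypotheses above have been verified.
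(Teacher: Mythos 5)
Your proposal coincides with the paper's proof: Theorem~\ref{Thm: Lp bounds semigroup} is obtained there exactly by invoking \cite[Lem.~7.3]{Lp-Kato}, with its hypotheses supplied by Proposition~\ref{Prop: DK elliptic} and the compatibility statement of Remark~\ref{Rem: compatible} (the internal mechanism of that lemma is left as a black box, so your interpolation/Dunford sketch is extra and not relied upon). The one point you omit is that \cite[Lem.~7.3]{Lp-Kato} is stated only for $p \geq 2$; the paper covers $p \leq 2$ by a duality argument with $\cB^* + \shift$ (or by reordering the $\H^\infty$-calculus and $(\cB+\shift)^{-1}$ in the cited computation).
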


\begin{remark}
	In~\cite[Lem.~7.3]{Lp-Kato}, only the case $p \geq 2$ is presented. The case $p \leq 2$ either follows by a duality argument with $\cB^* + \shift$, or by repeating the calculation in~\cite{Lp-Kato}, but changing the order in which $\H^\infty$-calculus and $(\cB+\shift)^{-1}$ are applied.
\end{remark}

\subsection{Square roots and bounds on \texorpdfstring{\boldsymbol{$\Wx^{-1,p}$}}{W-1p}}
\label{Subsec: square roots and semigroup}

As an m-$\omega$-sectorial operator, $B+\shift$ possesses a square root $(B+\shift)^\frac{1}{2}$. It acts as an isomorphism $\Wx^{1,2} \to \Lx^2$ according to the solution of the Kato square root problem~\cite{Kato}. As a consequence of coefficient regularity, $(B+\shift)^\frac{1}{2}$ extrapolates to an isomorphism $\Wx^{1,p} \to \Lx^p$ for \emph{all} $p\in (1,\infty)$.
Similar ideas were already employed in~\cite{Fackler18}, but relying on the Gaussian property, which was only established in the scalar case $m=1$ and is notably more technical. Instead, we use recent results established by the author in~\cite[Thm.~1.1]{Lp-Kato}.
Indeed, in the case $p\leq 2$, its application is justified by Theorem~\ref{Thm: Lp bounds semigroup}, whereas in the case $p\geq 2$,
we appeal to Proposition~\ref{Prop: DK elliptic} in conjunction with Remark~\ref{Rem: compatible}.

\begin{theorem}
\label{Thm: square root Lp bounds}
	Let $p\in (1,\infty)$. Then $(B+\shift)^\frac{1}{2}$ extrapolates to a (compatible) isomorphism $\Wx^{1,p} \to \Lx^p$.
\end{theorem}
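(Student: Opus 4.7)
The statement is an application of the main extrapolation result for Kato square roots from~\cite[Thm.~1.1]{Lp-Kato}, where the hypotheses are already at hand in the excerpt. My plan is to treat the ranges $p \leq 2$ and $p \geq 2$ separately, matching each to the corresponding hypothesis of the cited theorem, and then to deduce compatibility from agreement on a dense common subspace.

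As a starting point, recall that $B+\shift$ is a densely defined, invertible, m-$\omega$-sectorial operator on $\Lx^2$, so its square root $(B+\shift)^{1/2}$ is well defined via the sectorial calculus, and the solution of the Kato square root problem~\cite{Kato} together with the fact that $\dom(B+\shift)^{1/2}=\Wx^{1,2}$ shows that $(B+\shift)^{1/2}\colon \Wx^{1,2}\to \Lx^2$ is an isomorphism with norm controlled by $\Lambda,\lambda$. This is the base case from which we extrapolate in $p$.

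For $p\in (1,2]$, the required hypothesis of~\cite[Thm.~1.1]{Lp-Kato} is an $\Lx^p$-boundedness of the holomorphic semigroup generated by $-(B+\shift)$ on a sector, which is furnished by Theorem~\ref{Thm: Lp bounds semigroup} (applied to $B$ itself and also to its adjoint $B^*$ when needed). For $p\in [2,\infty)$, the hypothesis is an a priori $\Wx^{1,p}$-regularity estimate for solutions of the elliptic equation $(\cB+\shift)u=f$, which is precisely the content of Proposition~\ref{Prop: DK elliptic}; the compatibility of the $\Wx^{1,p}$-solution with the Lax--Milgram solution in $\Wx^{1,2}$ (Remark~\ref{Rem: compatible}) is what allows~\cite[Thm.~1.1]{Lp-Kato} to conclude that $(B+\shift)^{1/2}$ extrapolates to an isomorphism $\Wx^{1,p}\to\Lx^p$ in this range as well. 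Since our coefficients lie in $\E(\Lambda,\lambda,\eps,\M)$, the implicit constants depend only on the parameters listed in Agreement~\ref{Agreement: fix constants}.

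Finally, for compatibility, I would argue that the extrapolated isomorphism $\Wx^{1,p}\to\Lx^p$ agrees with the original $\Wx^{1,2}\to\Lx^2$ isomorphism on the intersection $\Wx^{1,p}\cap\Wx^{1,2}$, which is dense in $\Wx^{1,p}$: on smooth, compactly supported functions (or any convenient common core) both maps are given by the sectorial calculus applied to $B+\shift$, so they coincide there by construction, and by density and continuity they coincide on the full intersection. The main obstacle is nothing more than verifying the precise form of the hypotheses required by~\cite[Thm.~1.1]{Lp-Kato}; the work of matching them to Theorem~\ref{Thm: Lp bounds semigroup} and Proposition~\ref{Prop: DK elliptic} is already indicated in the text preceding the theorem, so the proof should be essentially a pointer to these three results.
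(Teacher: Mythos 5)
Your proposal is correct and follows essentially the same route as the paper: both apply~\cite[Thm.~1.1]{Lp-Kato} with the case $p\leq 2$ justified by Theorem~\ref{Thm: Lp bounds semigroup} and the case $p\geq 2$ by Proposition~\ref{Prop: DK elliptic} together with Remark~\ref{Rem: compatible}. Your additional density argument for compatibility is a harmless elaboration of what the paper leaves implicit.
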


Theorem~\ref{Thm: square root Lp bounds} allows us to translate the $\Lx^p$-bounds for $\{ \e^{-z (B+\shift)} \}_{z\in \Sec_\varphi}$ from Theorem~\ref{Thm: Lp bounds semigroup} to $\Wx^{-1,p}$-bounds.
\begin{proposition}
\label{Prop: semigroup W-1p bounds}
	Let $p\in (1,\infty)$ and $\varphi \in [0,\nicefrac{\pi}{2}-\omega)$. One has the estimate
	\begin{align}
		\| \e^{-z (B+\shift)} f \|_{\Wx^{-1,p}} \lesssim \| f \|_{\Wx^{-1,p}} \qquad (z\in \Sec_\varphi, f\in \Wx^{-1,p} \cap\Lx^2).
	\end{align}
	In particular, $\{ \e^{-z (B+\shift)} \}_{z\in \Sec_\varphi}$ extrapolates to a semigroup on $\Wx^{-1,p}$ with generator $-(\cB+\shift)$.
\end{proposition}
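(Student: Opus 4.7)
The plan is to transfer the $\Lx^p$-bounds of Theorem~\ref{Thm: Lp bounds semigroup} to the scale $\Wx^{-1,p}$ by conjugating the semigroup with the square root of $\cB + \shift$. The ingredient that I still need is the version of Theorem~\ref{Thm: square root Lp bounds} mapping $\Lx^p$ to $\Wx^{-1,p}$ rather than $\Wx^{1,p}$ to $\Lx^p$, which I will obtain by duality. Since the adjoint coefficient $B^*$ lies in the same class $\E(\Lambda,\lambda,\eps,\M)$, Theorem~\ref{Thm: square root Lp bounds} applies to $\cB^* + \shift$ and furnishes a compatible isomorphism $(\cB^* + \shift)^{\frac{1}{2}} : \Wx^{1,p'} \to \Lx^{p'}$. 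Taking the $\Lx^2$-adjoint yields a compatible isomorphism $(\cB + \shift)^{\frac{1}{2}} : \Lx^p \to \Wx^{-1,p}$, and hence the boundedness of a compatible $(\cB + \shift)^{-\frac{1}{2}} : \Wx^{-1,p} \to \Lx^p$.

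Next, for $f \in \Wx^{-1,p} \cap \Lx^2$, I will use the $\Lx^2$ functional calculus of the m-sectorial operator $\cB + \shift$ to write
\begin{align*}
\e^{-z(\cB + \shift)} f = (\cB + \shift)^{\frac{1}{2}} \, \e^{-z(\cB + \shift)} \, (\cB + \shift)^{-\frac{1}{2}} f,
\end{align*}
where each factor acts on $\Lx^2$. Taking $\Wx^{-1,p}$-norms and chaining the three bounded mappings
\begin{align*}
\Wx^{-1,p} \xrightarrow{(\cB + \shift)^{-1/2}} \Lx^p \xrightarrow{\e^{-z(\cB + \shift)}} \Lx^p \xrightarrow{(\cB + \shift)^{1/2}} \Wx^{-1,p}
\end{align*}
then produces the claimed estimate, with the middle factor controlled uniformly in $z \in \Sec_\varphi$ by Theorem~\ref{Thm: Lp bounds semigroup}. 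The case $\varphi = 0$ poses no extra difficulty since one may simply embed $\Sec_0 = (0,\infty) \subseteq \Sec_{\varphi'}$ for any $\varphi' \in (0, \nicefrac{\pi}{2} - \omega)$.

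The main technical obstacle I anticipate is the bookkeeping of compatibilities: on $\Wx^{-1,p} \cap \Lx^2$ the element $(\cB + \shift)^{-\frac{1}{2}} f$ produced by the $\Lx^2$ functional calculus must agree with the one produced by the $\Wx^{-1,p} \to \Lx^p$ extension constructed above, and similarly for the subsequent two factors. This will follow from the compatibility built into Theorem~\ref{Thm: square root Lp bounds} (which transports to the $\Lx^p \to \Wx^{-1,p}$ direction by the standard duality argument above) together with the compatibility of the $\Lx^p$-semigroup from Theorem~\ref{Thm: Lp bounds semigroup} with the $\Lx^2$-semigroup on $\Lx^p \cap \Lx^2$. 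Once these identifications are in place, the proof reduces to the three-step chain displayed above.
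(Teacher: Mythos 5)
Your proposal is correct and follows essentially the same route as the paper: the paper also conjugates the semigroup with $(B+\shift)^{\pm\frac{1}{2}}$, proves $\|(B+\shift)^{-\frac{1}{2}}f\|_{p}\lesssim\|f\|_{\Wx^{-1,p}}$ by dualizing Theorem~\ref{Thm: square root Lp bounds} for $\cB^*+\shift$ on $\Wx^{1,p'}$, and then chains this with the $\Lx^p$-bound of Theorem~\ref{Thm: Lp bounds semigroup} and a second duality step for the outgoing factor $(B+\shift)^{\frac{1}{2}}:\Lx^p\to\Wx^{-1,p}$. The compatibility bookkeeping you flag is handled in the paper exactly as you anticipate, via Remark~\ref{Rem: compatible} and the observation that $\e^{-z(B+\shift)}$ is the part of $\e^{-z(\cB+\shift)}$ in $\Lx^2$.
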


\begin{proof}
	Let $z\in \Sec_\varphi$ and $f\in \Wx^{-1,p} \cap\Lx^2$. As a primer, let us show
	\begin{align}
	\label{Eq: square root for negative regularity order}
		\| (B+\shift)^{-\frac{1}{2}} f \|_p \lesssim \| f \|_{\Wx^{-1,p}}.
	\end{align}
	We employ a duality argument. To this end, let $h\in \Lx^{p'} \cap \Lx^2$. Note that the coefficient class $\E(\Lambda, \lambda, \eps, M)$ is invariant under taking adjoints. Calculate using Kato's square root property and Theorem~\ref{Thm: square root Lp bounds} (applied with $B^*$ and $p'$ instead of $B$ and $p$) that
	\begin{align}
	\label{Eq: square root dual estimate}
		|( (B+\shift)^{-\frac{1}{2}} f \SP h)| &=  |( f \SP (B^*+\shift)^{-\frac{1}{2}} h )| \\
		&\leq \| f \|_{\Wx^{-1,p}} \| (B^*+\shift)^{-\frac{1}{2}} h \|_{\Wx^{1,p'}} \\
		&\lesssim \| f \|_{\Wx^{-1,p}} \| h \|_{p'}.
	\end{align}
	Duality lets us conclude this first claim.

	Next, write $$\e^{-z (B+\shift)} f = \e^{-z (B+\shift)} (B+\shift)^\frac{1}{2} (B+\shift)^{-\frac{1}{2}} f = (B+\shift)^\frac{1}{2} \e^{-z (B+\shift)} (B+\shift)^{-\frac{1}{2}} f.$$ Let $g\in \Wx^{1,p'} \cap \Lx^2$, and calculate similarly as above, but using furthermore Theorem~\ref{Thm: Lp bounds semigroup}, that
	\begin{align}
		|\langle \e^{-z (B+\shift)} f, g \rangle| &= |( \e^{-z (B+\shift)} (B+\shift)^{-\frac{1}{2}} f \SP (B^*+\shift)^\frac{1}{2} g )| \\
		&\leq \| \e^{-z (B+\shift)} (B+\shift)^{-\frac{1}{2}} f \|_p \| (B^*+\shift)^\frac{1}{2} g \|_{p'} \\
		&\lesssim \| (B+\shift)^{-\frac{1}{2}} f \|_p \| g \|_{\Wx^{1,p'}}.
	\end{align}
	Duality and~\eqref{Eq: square root for negative regularity order} lead to $\| \e^{-z (B+\shift)} f \|_{\Wx^{-1,p}} \lesssim \| (B+\shift)^{-\frac{1}{2}} f \|_p \lesssim \| f \|_{\Wx^{-1,p}}$.
\end{proof}

\subsection{Uniform \texorpdfstring{\boldsymbol{$R$}}{R}-sectoriality}
\label{Subsec: uniform R-bounds}

As a preparation for Section~\ref{Subsec: S2 bdd}, we show $R$-sectoriality for the set of operators $\{ B + \shift \SetSep B\in \cC \}$, where $\cC$ consists of all operators associated with coefficients in $\E(\Lambda, \lambda, \eps, \M)$, and where the $R$-bound only depends on the quantified parameters from Agreement~\ref{Agreement: fix constants}. For further background on $R$-boundedness and $R$-sectoriality, the reader can consult~\cite{KW}.

\begin{proposition}[$R$-sectoriality of $\cC$]
\label{Prop: R-sectoriality}
	Let $p\in (1,\infty)$ and $\varphi \in [0, \nicefrac{\pi}{2}-\omega)$. Then, the set $\{ \e^{-z (B + \shift)} \SetSep z\in \Sec_\varphi, B\in \cC \}$ satisfies the square function estimate
	\begin{align}
	\label{Eq: SFE semigroup}
		\Bigl\| \Bigl( \sum_{j=1}^k |\e^{-z_j (B_j + \shift)} f_j|^2 \Bigr)^\frac{1}{2} \Bigr\|_p \lesssim \Bigl\| \Bigl( \sum_{j=1}^k |f_j|^2 \Bigr)^\frac{1}{2} \Bigr\|_p \quad \bigl(z_j \in \Sec_\varphi, B_j \in \cC, f_j \in \Lx^p \cap \Lx^2 \bigr).
	\end{align}
	In particular, for $z\in \Sec_\varphi$ and $B\in \cC$ fixed, the operator $\e^{-z (B + \shift)}$ extends from $\Lx^p \cap \Lx^2$ to a bounded operator $S_B(z)$ on $\Lx^p$, $\{ S_B(z) \}_{z\in \Sec_\varphi}$ is a strongly continuous and analytic semigroup on $\Lx^p$, and the set $\{ S_B(z) \SetSep z\in \Sec_\varphi, B\in \cC \}$ is $R$-bounded with $R$-bound depending only on the parameters fixed in Agreement~\ref{Agreement: fix constants}.
\end{proposition}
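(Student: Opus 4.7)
The plan is to use the Kunstmann--Weis strategy~\cite{KW} of deducing $R$-boundedness from $\Lx^p$-boundedness together with $\Lx^2$ off-diagonal bounds, while carefully tracking the dependence of every constant on the parameters fixed in Agreement~\ref{Agreement: fix constants}. By Khintchine's inequality, the square function estimate~\eqref{Eq: SFE semigroup} on $\Lx^p$ is equivalent, up to a $p$-dependent constant, to $R$-boundedness of the family, so it suffices to prove the latter.

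First I would establish $\Lx^2$ off-diagonal bounds of Gaffney--Davies type. The classical perturbation argument, in which the form $b + \shift(\argdot\SP\argdot)_2$ is conjugated by a weight $\e^{\rho \eta}$ for a bounded $1$-Lipschitz function $\eta$ and $\rho>0$ small depending only on $\Lambda$ and $\lambda$, produces bounds of the form
\begin{align}
\| \ind_F \, \e^{-z(B+\shift)} (\ind_E f) \|_{\Lx^2} \lesssim \e^{-c\,\dist(E,F)^2/|z|} \, \| \ind_E f \|_{\Lx^2} \qquad (z\in \Sec_\varphi),
\end{align}
valid for all measurable sets $E,F\subseteq \R^d$, with constants depending only on $\Lambda$, $\lambda$, and $\varphi$. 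The holomorphic extension from the positive real axis to the sector $\Sec_\varphi$ follows by a Phragm\'en--Lindel\"of argument anchored in the uniform $\Lx^2$-contractivity of the semigroup on $\Sec_{\nicefrac{\pi}{2}-\omega}$.

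Next, Riesz--Thorin interpolation between the $\Lx^2$ bound above and the uniform $\Lx^p$ boundedness of Theorem~\ref{Thm: Lp bounds semigroup} transfers the off-diagonal bounds to the $\Lx^p$-scale, retaining Gaussian decay in $\dist(E,F)^2/|z|$ and uniform constants; for $p\leq 2$ one interpolates instead using $B^*$ and $p'$, noting that $\E(\Lambda,\lambda,\eps,\M)$ is invariant under taking adjoints. Combined with the uniform $\Lx^p$-boundedness, these $\Lx^p$ off-diagonal bounds are exactly the input of the Kunstmann--Weis square function criterion (see, e.g., \cite[\S 4]{KW}), yielding~\eqref{Eq: SFE semigroup} first for real positive parameters $(z_j)\subseteq (0,\infty)$. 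Analyticity of the semigroup on the sector, combined with a Phragm\'en--Lindel\"of argument applied to the vector-valued Rademacher square function, then extends the estimate to arbitrary $(z_j)\subseteq \Sec_\varphi$, giving the $R$-bounded analytic semigroup $\{S(z)\}_{z\in\Sec_\varphi}$ on $\Lx^p$.

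The main obstacle will lie in verifying uniformity at the final step: the $R$-boundedness criterion of~\cite{KW} is usually stated qualitatively, so its proof has to be revisited to make sure that every implicit constant depends only on $\Lambda$, $\lambda$, $\varphi$, $p$, $d$, and $m$, and not on any finer feature of $B$, such as its modulus of continuity $\M$. Once this check is performed, every quantity in the statement is controlled by the parameters fixed in Agreement~\ref{Agreement: fix constants}, which is precisely the uniformity claimed in the proposition.
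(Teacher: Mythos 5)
Your overall strategy --- uniform $\Lx^p$-bounds plus off-diagonal estimates fed into the Kunstmann--Weis machinery, with constants tracked throughout --- is the same as the paper's, and your first step (Gaffney--Davies $\Lx^2\to\Lx^2$ off-diagonal bounds on the sector with constants depending only on $\lambda$, $\Lambda$, $\varphi$) is fine. The gap is in the interpolation step. Interpolating the $\Lx^2\to\Lx^2$ Gaffney bound with the uniform $\Lx^{p}\to\Lx^{p}$ bound of Theorem~\ref{Thm: Lp bounds semigroup} only produces \emph{same-exponent} off-diagonal bounds $\Lx^p\to\Lx^p$. These are \emph{not} the input of the square-function criterion in~\cite{KW}: that criterion requires $\Lx^r\to\Lx^s$ off-diagonal bounds with a genuine gain of integrability, $r<p<s$, together with the homogeneity factor $|z|^{\nicefrac{d}{2s}-\nicefrac{d}{2r}}$ as in~\eqref{Eq: ODE}. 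The gain is what allows the off-diagonal pieces to be resummed via a pointwise domination by $(\mathcal{M}|f|^r)^{\nicefrac{1}{r}}$ and the Fefferman--Stein maximal inequality on $\Lx^{\nicefrac{p}{r}}(\ell^{\nicefrac{2}{r}})$, and this breaks down at $r=p$; uniform boundedness plus same-exponent off-diagonal decay is not known to imply $R$-boundedness for $p\neq 2$.

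The missing ingredient is \emph{hypercontractivity}: from Theorem~\ref{Thm: Lp bounds semigroup} and the self-improvement result \cite[Prop.~3.2~(1)]{Memoirs} one first obtains $\Lx^2\to\Lx^s$ bounds for some $s>2$ (with $c=0$ but with the correct power of $|z|$). Interpolating \emph{these} with the Gaffney estimates yields $\Lx^2\to\Lx^{s}$ off-diagonal bounds with Gaussian decay; duality (using that the class $\E(\Lambda,\lambda,\eps,\M)$ is stable under adjoints) gives the $\Lx^r\to\Lx^2$ counterpart for $r<2$, and composing the two halves of the semigroup, $\e^{-z(B+\shift)}=\e^{-\frac{z}{2}(B+\shift)}\e^{-\frac{z}{2}(B+\shift)}$, produces the required $\Lx^r\to\Lx^s$ bounds with $1\leq r<2<s\leq\infty$ and $p\in(r,s)$. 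With those in hand, the criterion of~\cite{KW} (with the dependence of constants tracked as in \cite[Sec.~5]{D-to-N}) applies directly to the whole family indexed by $z\in\Sec_\varphi$, so the detour through real $z$ followed by a Phragm\'en--Lindel\"of extension of the vector-valued estimate in your last step is also unnecessary.
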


\begin{remark}
\label{Rem: R-sectorial}
	Proposition~\ref{Prop: R-sectoriality} shows in particular that the semigroup in $\Lx^p$ is $R$-sectorial of the same angle as the semigroup on $\Lx^2$. Hence, we keep writing $\omega$ instead of, say, $\omega_R$.
\end{remark}

Before we come to the justification of Proposition~\ref{Prop: R-sectoriality}, let us record an important consequence that we will need later on in Section~\ref{Subsec: S2 bdd}.

\begin{corollary}
\label{Cor: R-bounded resolvent}
	Let $p\in (1,\infty)$ and $\psi \in [\nicefrac{\pi}{2}, \pi-\omega)$. Let $-B_p^\shift$ denote the generator of the semigroup $\{ S_B(t) \}_{t>0}$ from Proposition~\ref{Prop: R-sectoriality}. Then the set $\{ z (z+B_p^\shift)^{-1} \SetSep z\in \Sec_\psi, B\in \cC \}$ of operators on $\Lx^p$ is $R$-bounded, and the $R$-bound depends only on the quantities fixed in Agreement~\ref{Agreement: fix constants}.
\end{corollary}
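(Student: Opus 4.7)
The plan is to derive $R$-sectoriality on $\Sec_\psi$ from the $R$-boundedness of the analytic semigroup on $\Sec_\varphi$ provided by Proposition~\ref{Prop: R-sectoriality}, via a rotated Laplace representation of the resolvent. This is a classical principle in the theory of $R$-sectorial operators (see, e.g., \cite{KW}); the point here is to make sure the uniformity statement of Proposition~\ref{Prop: R-sectoriality} carries over unchanged.

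Given $\psi \in [0, \pi - \omega)$, I would first fix an auxiliary angle $\varphi \in (0, \nicefrac{\pi}{2}-\omega)$ with $\nicefrac{\pi}{2} + \varphi > \psi$; such a $\varphi$ exists because $\psi < \pi - \omega = \nicefrac{\pi}{2} + (\nicefrac{\pi}{2} - \omega)$. Set $\delta := \tfrac{1}{2}(\nicefrac{\pi}{2} + \varphi - \psi) > 0$. By Proposition~\ref{Prop: R-sectoriality}, the family $\{S(z)\}_{z \in \Sec_\varphi}$ is $R$-bounded on $\Lx^p$ with an $R$-bound $C$ that depends only on the parameters of Agreement~\ref{Agreement: fix constants}.

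For each $\lambda \in \Sec_\psi$, I would choose a rotation $\theta = \theta(\lambda)$ satisfying $|\theta| \leq \varphi - \delta$ and $|\arg \lambda + \theta| \leq \nicefrac{\pi}{2} - \delta$; concretely, $\theta = -\mathrm{sgn}(\arg \lambda) \cdot (\varphi - \delta)$ when $|\arg \lambda| > \nicefrac{\pi}{2} - \delta$, and $\theta = 0$ otherwise. Since $B_p^\shift$ is sectorial of angle $\omega$, the resolvent $(\lambda + B_p^\shift)^{-1}$ exists for all $\lambda \in \Sec_\psi$ (because $\psi < \pi - \omega$). A contour-rotation argument based on Cauchy's theorem, the uniform bound $\|S(z)\|_{\Lx^p \to \Lx^p} \leq C$ on $\Sec_\varphi$, and the exponential decay of $|e^{-\lambda z}|$ along the chosen ray then yields the representation
\begin{align}
\lambda (\lambda + B_p^\shift)^{-1} = \int_0^\infty \lambda e^{i\theta} e^{-\lambda r e^{i\theta}} S(r e^{i\theta}) \d r.
\end{align}

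To conclude, observe that the scalar kernel $r \mapsto \lambda e^{i\theta} e^{-\lambda r e^{i\theta}}$ has $L^1(0,\infty)$-norm equal to $|\lambda|/\Re(\lambda e^{i\theta}) \leq 1/\sin \delta$, uniformly for $\lambda \in \Sec_\psi$. Since $\{S(r e^{i\theta}) : r > 0,\, |\theta| \leq \varphi - \delta\} \subseteq \{S(z)\}_{z \in \Sec_\varphi}$ is $R$-bounded with bound $C$, the standard fact that integration of an $R$-bounded family against a uniformly $L^1$-bounded family of scalar kernels preserves $R$-boundedness (Kahane's contraction principle combined with the convex-hull lemma, cf.~\cite{KW}) yields the $R$-boundedness of $\{\lambda (\lambda + B_p^\shift)^{-1}\}_{\lambda \in \Sec_\psi}$ with $R$-bound of order $C / \sin \delta$. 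Since $\delta$ depends only on $\psi$ and $\varphi$, and $C$ is uniform in the Agreement parameters, so is the final $R$-bound. The main technical point is the justification of the contour rotation; once this is in place the rest is a routine consequence of $R$-boundedness of the semigroup.
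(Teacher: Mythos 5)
Your proposal is correct and follows essentially the same route as the paper: the paper also represents $(z+B_p^\shift)^{-1}$ by a Laplace integral along a rotated ray, splitting $\arg(z)$ so that the scalar kernel decays uniformly while the semigroup arguments stay in the $R$-bounded sector $\Sec_\varphi$, and then invokes the standard fact that integrating an $R$-bounded family against uniformly $\L^1$-bounded scalar kernels preserves $R$-boundedness (cited as~\cite[Ex.~2.15]{KW}). Your version merely makes the choice of rotation angle and the $\nicefrac{1}{\sin\delta}$ bound on the kernel explicit.
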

\begin{proof}
	Fix $z\in \Sec_\psi$ and $B\in \cC$. Split $\arg(z) = \varphi + \tilde \varphi$, where $|\varphi| \in [0, \psi - \nicefrac{\pi}{2})$ and $|\tilde \varphi| \in [0, \nicefrac{\pi}{2})$. The operator $(z+B_p^\shift)^{-1}$ can be represented using the Laplace transform~\cite[Prop.~3.4.1~d)]{Haase} via
	\begin{align}
		(z+B_p^\shift)^{-1} = \e^{-i \varphi} \int_0^\infty \e^{-t |z| \e^{i \tilde \varphi}} S_B(t \e^{-i \varphi}) \d t.
	\end{align}
	Then, the claim follows from~\cite[Ex.~2.15]{KW}. Indeed, they show that $z(z+B_p^\shift)^{-1}$ is contained in the strong closure of the absolute convex hull of the semigroup generated by $-B_p^\shift$. Hence, $\{ z (z+B_p^\shift)^{-1} \SetSep z\in \Sec_\psi, B\in \cC \}$ is contained in the strong closure of the absolute convex hull of $\{ S_B(z) \SetSep z\in \Sec_{\psi-\nicefrac{\pi}{2}}, B\in \cC \}$. But taking the strong closure of the absolute convex hull of a set of operators preserves $R$-boundedness with the same $R$-bound, so we conclude using Proposition~\ref{Prop: R-sectoriality}.
\end{proof}

Given $1\leq r < 2 < s \leq \infty$ such that $p\in (r,s)$, and $B \in \cC$, Proposition~\ref{Prop: R-sectoriality} is a consequence of so-called \emph{$\Lx^r \to \Lx^s$ off-diagonal estimates} for $\{ \e^{-z (B+\shift)} \}_{z\in \Sec_\varphi}$. The general approach in the context of homogeneous spaces was presented in~\cite{KW}, and for dependence of the implied constants see~\cite[Sec.~5]{D-to-N}. To be more precise, we suppose that, for some $c>0$ and for all measurable sets $E,F \subseteq \R^d$ and $z\in \Sec_\varphi$, one has the bound
\begin{align}
\label{Eq: ODE}
	\| \ind_F \e^{-z (B + \shift)} \ind_E f \|_s \lesssim |z|^{\nicefrac{d}{2s}-\nicefrac{d}{2r}} \e^{-c \frac{\dist(E,F)^2}{|z|}} \| \ind_E f \|_r \qquad (f\in \Lx^r \cap \Lx^2).
\end{align}

Inequality~\eqref{Eq: ODE} for $r=s=2$ is known under the name \emph{Gaffney estimates} and is well-known in the literature. A version of this result that carefully keeps track of the implicit constants can be found in~\cite[Prop.~3.2]{Lp-Kato}. Likewise,~\eqref{Eq: ODE} is known for $r=2$, $s\in (2,\infty)$, and with $c=0$, as a consequence of the $\Lx^p$-bounds for the semigroup provided by Theorem~\ref{Thm: Lp bounds semigroup} and \cite[Prop.~3.2~(1)]{Memoirs}. In this case, we speak of \emph{hypercontractivity} of the semigroup. Finally,~\eqref{Eq: ODE} is then a consequence of interpolation of Gaffney estimates with hypercontractivity, taking duality and composition into account.

\section{Existence and uniqueness of weak $(p,q)$-solutions}
\label{Sec: existence weak solutions}

In this section, we consider a family of operators $\{ \cB_t \}_{0<t<T}$ associated with coefficients $B(t,\cdot) \in \E(\Lambda, \lambda, \eps, M)$ that \emph{depend $\Contx^{\eps}$ on $t$}.\footnote{Say that a family $\{B_t\}_{0 < t < T} \subseteq \E(\Lambda, \lambda, \alpha, M)$ \emph{depends $\Contx^\beta$ on $t$} if $B_t \in \E(\Lambda, \lambda, \alpha, M)$ and the mapping $t \mapsto B_t$ is $\beta$-Hölder continuous with values in $\Contx^\alpha$, that is, the scalar-valued function $t \mapsto \|B_t\|_{\Contx^\alpha}$ lies in the class $\Contt^\beta$.}
The prototype for such a family of operators is the family $\{ \cL_t \}_{0<t<T}$ from Section~\ref{Sec: Introduction} (keep Remark~\ref{Rem: coefficient class} in mind).
We aim to prove the existence and uniqueness of solutions to the associated problem \eqref{Eq: shifted non-autonomous equation} in the sense of Definition~\ref{Def: weak solution}.
To do so, we recast our original problem in the framework originating from the works of Dong and Kim \cite{Dong-Kim-ARMA, Dong-Kim-JFA, Dong-Kim-CoV, Dong-Kim-TAMS}.
This includes the introduction of a global extension in time of our original problem on $\R$ as outlined in \cite[Rem.~1]{Dong-Kim-ARMA}.
Implicit constants in this section are allowed to depend on $p$, $q$, $[w]_{\A_q}$, $\Lambda$, $\lambda$,  $\alpha, \beta$, Hölder regularity, and dimensions.

We begin by extending our coefficient family $\{B_t\}_{0 < t < T}$ to all of $\R$. We extend constantly at the endpoints, that is, we set $B_t \coloneqq B_0$ for all $t < 0$ and $B_t \coloneqq B_T$ for all $t > T$.
For such $t$, we associate of course also a form $b_t$ with $B_t$.
Note that this extension does not affect the assumed Hölder regularity of the coefficients.
Furthermore, we isometrically extend the right-hand side $f \in \Lt^q(w;\Wx^{-1,p})$ outside of $(0,T)$ by zero to arrive at a function in $\L^q(\R, w; \Wx^{-1,p})$, which we denote by $F$. Also in the sequel, we will systematically denote functions on $\R$ by capital letters to better distinguish them from their local analogs.
Given the extensions of $\{B_t\}_{0 < t < T}$ and $f$, we look for solutions $U \in \L^q(\R, w; \Wx^{1,p})$ fulfilling the extended integral equation
\begin{align}\label{Eq: extended weak pq-solution}
  \tag{EIE}
  \begin{split}
  &\int_\R -\Phi'(s) ( U(s) \SP g ) + \Phi(s) b_s(U(s), g)
  + \shift \Phi(s)(U(s) \SP g) \d s \\
  &\qquad\qquad= \int_\R \Phi(s) \langle F(s), g \rangle_{\Wx^{-1,p}, \Wx^{1,p'}} \d s,
\end{split}
\end{align}
where we use test functions $\Phi \in \Cont_0^\infty(\R)$ and $g \in \Cont_0^\infty(\R^d)$.
Dong and Kim solved a similar problem in \cite{Dong-Kim-TAMS}.
They show that, for a given $F \in \mathbb{H}^{-1}_{p,q,w}(\R \times \R^d)$ with $F = F_0 + \sum_{i = 1}^d \partial_i F_i$, $F_j \in \L^q(\R, w; \Lx^p)$, there exists a solution $U \in \mathring{\mathcal{H}}^1_{p,q,w}(\R \times \R^d)$ satisfying the integral equation
\begin{align}
	\label{eq: weak Dong-Kim}
        \tag{DKIE}
	\qquad \int_\R - ( U(s) \SP\Psi'(s)) + b_s(U(s), \Psi(s)) + \shift (U(s) \SP \Psi(s) ) \d s
	= \int_\R  \langle F(s) , \Psi(s)\rangle \d s
\end{align}
for all test functions $\Psi \in \Cont_0^\infty(\R\times \R^d)$.
We explain and compare the used function spaces in the sequel of this section.
For the notion of weak solutions employed by Dong and Kim, see also~\cite[p.~896]{Dong-Kim-ARMA} and \cite[p.~3286]{Dong-Kim-JFA}.
Furthermore, solutions to \eqref{eq: weak Dong-Kim} are subject to the a priori estimate
\begin{align}
	\label{eq: apriori Dong-Kim}
	\shift \| U\|_{\L^q(\R, w; \Lx^p)} +  \sum_{i = 1}^d \shift^{\nicefrac{1}{2}} \| \partial_i U \|_{\L^q(\R, w; \Lx^p)}
	\lesssim
	\| F_0 \|_{\L^q(\R, w; \Lx^p)} + \sum_{i = 1}^d \shift^{\nicefrac{1}{2}} \| F_i \|_{\L^q(\R, w; \Lx^p)}
\end{align}
according to \cite[Thm.~7.2]{Dong-Kim-TAMS}, where the implicit constant depends on $p$, $q$, $[w]_{\A_q}$, $\Lambda$, $\lambda$,  dimension,  and the parameters $\gamma$ and $R_0$ appearing in Lemma~\ref{lem: integral inequality}.
In particular, choosing $F = 0$ in~\eqref{eq: apriori Dong-Kim} shows the uniqueness of solutions to~\eqref{eq: weak Dong-Kim}.

The rest of this section is divided into two steps:
First, we will relate the solution concepts of \eqref{eq: weak Dong-Kim} and~\eqref{Eq: extended weak pq-solution} and show that the former implies the latter. Eventually, this leads to a solution for the original problem~\eqref{Eq: shifted non-autonomous equation}.
Second, we will check the validity of the regularity assumptions on $\{B_t\}$ from~\cite[Thm.~7.2]{Dong-Kim-TAMS} to harvest the results of the first step.
At the end of the day, this will prove the following theorem.
\begin{theorem}
\label{Thm: weak solutions}
  Given $f \in \Lt^q(w;\Wx^{-1,p})$, there exists a unique weak $(p,q)$-solution $u$ to~\eqref{Eq: shifted non-autonomous equation}, and one has the estimate
	\begin{align}
	\label{Eq: MR estimate weak solution}
		\| \partial_t u \|_{\Lt^q(w;\Wx^{-1,p})}
		+  \| \nabla_x u \|_{\Lt^q(w;\Lx^{p})}
		+ \shift \| u\|_{\Lt^q(w;\Lx^p)}
		\lesssim \| f \|_{\Lt^q(w;\Wx^{-1,p})}\,.
	\end{align}
\end{theorem}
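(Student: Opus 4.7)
The plan is to reduce \eqref{Eq: shifted non-autonomous equation} on $(0,T)\times\R^d$ to the parabolic framework of Dong and Kim on $\R\times\R^d$, invoke their well-posedness result \cite[Thm.~7.2]{Dong-Kim-TAMS}, and then transfer the outcome back to our setting. As already indicated, I extend $f$ by zero to $F\in\L^q(\R;\Wx^{-1,p})$ and the coefficients $B_t$ by the constant values $B_0$ and $B_T$ outside $(0,T)$; the extended family still has the regularity $\Contt^\beta(\Contx^\alpha)$ since constant extension preserves Hölder norms. If I can produce a unique $U\in\mathring{\mathcal{H}}^1_{p,q,1}$ solving~\eqref{eq: weak Dong-Kim} obeying~\eqref{eq: apriori Dong-Kim}, then $u\coloneqq U|_{(0,T)}$ will be the required weak $(p,q)$-solution.

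\textbf{Step 1: verify Dong--Kim's hypothesis.} The essential prerequisite for \cite[Thm.~7.2]{Dong-Kim-TAMS} is a smallness-of-mean-oscillation condition on the coefficients over parabolic cylinders, and this is the content of the forthcoming Lemma~\ref{lem: integral inequality}. Here I use the regularity of $\{B_t\}$ decisively: since $B(t,\cdot)\in\Contx^\alpha$ uniformly in $t$ and $t\mapsto B(t,\cdot)$ is $\Contt^\beta$ with values in $\Contx^\alpha$, the mean oscillation over a parabolic cylinder of radius $r$ is controlled polynomially, uniformly in its center. Choosing the shift $\shift$ large enough to exceed the threshold quantified in \cite{Dong-Kim-TAMS} (and a bit more, per Remark~\ref{Rem: DK works with half shift}), Dong--Kim's theorem produces a unique $U$ satisfying~\eqref{eq: weak Dong-Kim} with estimate~\eqref{eq: apriori Dong-Kim}.

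\textbf{Step 2: equivalence of weak formulations.} To pass between~\eqref{eq: weak Dong-Kim} and our~\eqref{Eq: extended weak pq-solution}, I decompose the right-hand side as $F=F_0+\sum_i\partial_i F_i$ with $F_j\in\L^q(\R;\Lx^p)$ and $\|F_0\|_{\L^q(\Lx^p)}+\kappa^{1/2}\sum_i\|F_i\|_{\L^q(\Lx^p)}\lesssim\|F\|_{\L^q(\Wx^{-1,p})}$; this follows from a Bessel-potential argument (write $F=(\shift-\Delta)G$ for $G=(\shift-\Delta)^{-1}F\in\L^q(\Wx^{1,p})$ and let $F_0=\shift G$, $F_i=-\partial_iG$), with constants independent of $\shift\geq 1$. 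The Dong--Kim test functions $\Psi\in\Cont_0^\infty(\R\times\R^d)$ can then be replaced by tensor products $\Phi(s)g(x)$ by bilinearity and density of such products; choosing $\Phi$ compactly supported in $(0,T)$ recovers our integral equation~\eqref{Eq: weak pq-solution} for $u=U|_{(0,T)}$.

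\textbf{Step 3: initial condition, uniqueness, and estimate.} I expect the main obstacle to be the initial condition $u(0)=0$, as it requires a causality argument inside the Dong--Kim framework. My approach is to exploit that $F$ vanishes on $(-\infty,0)$ and the coefficients are constant there, so $U|_{(-\infty,0)}$ solves an autonomous homogeneous equation in $\mathring{\mathcal{H}}^1_{p,q,1}$. An application of~\eqref{eq: apriori Dong-Kim} on $(-\infty,0)$, obtained by multiplying $U$ with a smooth temporal cutoff $\chi_\tau$ supported in $(-\infty,\tau)$ with $\tau<0$ and letting $\tau\uparrow0$, forces $U\equiv 0$ on $(-\infty,0)$. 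The continuity of $u$ into $\Wx^{-1,p}$ at $0$ from Remark~\ref{Rem: weak solution}\ref{it: continuous} then yields $u(0)=0$. For uniqueness, given two weak $(p,q)$-solutions, their difference extends by zero to a global solution on $\R$ with $F=0$ (continuity at $0$ makes the extension admissible), and~\eqref{eq: apriori Dong-Kim} forces it to vanish. Finally,~\eqref{Eq: MR estimate weak solution} follows from~\eqref{eq: apriori Dong-Kim} combined with the above decomposition of $f$, while the bound on $\partial_t u$ is a consequence of Remark~\ref{Rem: weak solution}\ref{Item: weak derivative} and the bound $\|\cB_t u\|_{\Wx^{-1,p}}\lesssim\|u\|_{\Wx^{1,p}}$ coming from the uniform boundedness of $B$.
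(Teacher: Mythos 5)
Your overall route---constant-in-time extension of the coefficients, zero extension of $f$, verification of the mean oscillation condition, transfer between the Dong--Kim formulation \eqref{eq: weak Dong-Kim} and \eqref{Eq: extended weak pq-solution} via density of tensor-product test functions, and causality to recover $u(0)=0$---is exactly the paper's; your Bessel-potential decomposition $F=\shift G-\div \nabla G$ is simply an explicit version of the isomorphism $\mathbb{H}^{-1}_{p,q,1}\cong\L^q(\R;\Wx^{-1,p})$ that the paper imports from Adams.

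Two of your detailed arguments do not close as written. First, for uniqueness you extend the difference $w=u_1-u_2$ of two weak $(p,q)$-solutions \emph{by zero to all of $\R$}. Continuity at $0$ indeed legitimizes the extension to $(-\infty,0)$, but at the right endpoint one has in general $w(T)\neq 0$, so the zero extension to $(T,\infty)$ has a distributional time derivative containing the singular term $-w(T)\otimes\delta_T$ and is therefore \emph{not} a solution of \eqref{eq: weak Dong-Kim} with $F=0$; the a priori estimate cannot be applied to it. The repair is standard but must be said: invoke Dong--Kim's uniqueness on the half-line $(-\infty,T)$ (equivalently, test only with $\Phi$ supported in $(-\infty,T)$), which is also how the paper's terse ``by uniqueness'' is to be read. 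Second, your cutoff argument for $U\equiv 0$ on $(-\infty,0)$: multiplying by $\chi_\tau$ produces the commutator source term $\chi_\tau' U$, which does not tend to $0$ as $\tau\uparrow 0$, so the a priori estimate applied to $\chi_\tau U$ does not by itself force $U=0$ there; again the clean argument is uniqueness of the homogeneous problem on the time interval $(-\infty,0)$. Everything else (the oscillation lemma, the equivalence of weak formulations, and the final estimate including the $\partial_t u$ term via Remark~\ref{Rem: weak solution}~\ref{Item: weak derivative} and the uniform bound $\|\cB_t u\|_{\Wx^{-1,p}}\lesssim\|u\|_{\Wx^{1,p}}$) matches the paper.
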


\textbf{Step 1}: Compatibility with Dong and Kim.
In order to solve \eqref{eq: weak Dong-Kim}, Dong and Kim consider right-hand sides $F$ in the spaces $\mathbb{H}^{-1}_{p,q,w}(\R \times \R^d)$.
These spaces are isomorphic to the spaces $\L^q(\R, w; \Wx^{-1,p})$ as can bee seen from a parabolic variant of \cite[Thm.~3.9]{Adams}.
This means that the admissible right-hand sides for \eqref{eq: weak Dong-Kim} and \eqref{Eq: extended weak pq-solution} coincide.
Now, \cite[Sec.~8]{Dong-Kim-TAMS} gives the existence of a solution $U$ to \eqref{eq: weak Dong-Kim} in the regularity class $\mathring{\mathcal{H}}^1_{p,q,w}(\R \times \R^d)$, which denotes the closure of $\Cont_0^\infty(\R \times \R^d)$ in the space $\mathcal{H}^1_{p,q,w}(\R \times \R^d)$.
Since we work spatially in $\R^d$, $\mathring{\mathcal{H}}^1_{p,q,w}(\R \times \R^d) = \mathcal{H}^1_{p,q,w}(\R \times \R^d)$.
A function $U \in \mathcal{H}^1_{p,q,w}(\R \times \R^d)$ is by its very definition an element of $\L^q(\R, w; \Wx^{1,p})$.
Conversely a function in $\L^q(\R, w; \Wx^{1,p})$ that satisfies~\eqref{Eq: extended weak pq-solution} is a member of $\mathcal{H}^1_{p,q,w}(\R \times \R^d)$.
For complete definitions of the above function spaces, the reader can consult \cite[p.~3284]{Dong-Kim-JFA} and \cite[Sec.~4]{Dong-Kim-TAMS}.

Comparing the classes of test functions employed in \eqref{Eq: extended weak pq-solution} and \eqref{eq: weak Dong-Kim} reveals that Dong and Kim use a larger class of test functions in their integral formulation.
In particular, this shows that a solution to \eqref{eq: weak Dong-Kim} is also a solution to \eqref{Eq: extended weak pq-solution}.
On the other hand, recall that a function $U \in \L^q(\R, w; \Wx^{1,p})$ solving~\eqref{Eq: extended weak pq-solution}
is also an admissible function for~\eqref{eq: weak Dong-Kim}.
Using the fact that the tensors $\Phi(t)g(x)$ with $\Phi \in \Cont_0^\infty(\R)$ and $g \in \Cont_0^\infty(\R^d)$ are dense in
$\L^{q'}(\R, w'; \Wx^{-1,p'})$, we deduce by continuity (compare with Remark~\ref{Rem: weak solution}~\ref{Item: more general g}) that~\eqref{Eq: extended weak pq-solution} in particular remains to hold for test functions in
$\Cont_0^\infty(\R \times \R^d)$. Hence, we get that $U$ is also a solution for~\eqref{eq: weak Dong-Kim}, and is as such again unique.

Next, we focus on the a priori estimate~\eqref{eq: apriori Dong-Kim} and its relation to the maximal regularity estimate in Theorem~\ref{Thm: weak solutions}.
Recall $\shift \geq 1$. Then, we have
\begin{align}\label{eq: a priori estimation}
	\|U \|_{\L^q(\R, w; \Wx^{1,p})}
	\lesssim
	\shift \|U \|_{\L^q(\R, w; \Lx^p)} + \sum_{i = 1}^d \shift^{1/2} \| \partial_i U \|_{\L^q(\R, w; \Lx^p)}
	\lesssim \shift \| F \|_{\L^q(\R, w; \Wx^{-1,p})}.
\end{align}

Up to now, we have only worked out the existence and uniqueness of solutions to the extended integral equation~\eqref{Eq: extended weak pq-solution}.
Hence, it remains to get back to~\eqref{Eq: shifted non-autonomous equation}.
Recall that $F = 0$ outside of the interval $(0,T)$
by construction.
Consequently, $U = 0$ on $(-\infty, 0)$ by uniqueness, hence $U(0) = 0$ by continuity (see Remark~\ref{Rem: weak solution}~\ref{it: continuous}).
Additionally, the solution $U \in \L^q(\R, w; \Wx^{1,p})$ that has been constructed via the method above gives rise to a restriction $u=U|_{(0,T)} \in \Lt^q(w;\Wx^{1,p})$.
Then $u$ satisfies $u(0) = U(0) = 0$ by continuity and solves~\eqref{Eq: weak pq-solution}.
This shows that $u$ is the unique $(p,q)$-solution of \eqref{Eq: shifted non-autonomous equation}.

\textbf{Step 2}: Verification of the assumptions of Dong and Kim.
The following lemma shows that the mean oscillation condition in Assumption \cite[Asm.~7.1]{Dong-Kim-TAMS} is fulfilled. Hence, \cite[Thm.~7.2 \& Sec.~8]{Dong-Kim-TAMS} is applicable in our setting.

\begin{lemma}\label{lem: integral inequality}
	Let $\gamma \in (0,\nicefrac{1}{4})$.
        Then there exists $R_0 \in (0,1]$ depending only on $\gamma$ and the Hölder regularity of $\{B_t\}_{t \in \R}$ such that, for any $(t,x) \in \R^{d+1}$ and $r \in (0,R_0]$, we have
	\begin{align}
		\barint_{\Q_r(t,x)} \left| B^{\sysalpha\sysbeta}_s(y_1, \hat{y}) - \barint_{\Q_r^\prime(t,\hat{x})} B^{\sysalpha\sysbeta}_\tau(y_1, \hat{z}) \d \hat{z} \d \tau  \right| \d y \d s
		\leq \gamma \quad(\sysalpha, \sysbeta = 1,\dots,m),
	\end{align}
	where $\Q_r$ and $\Q_r^\prime$ denote the \emph{parabolic cylinders} given by
	\begin{align}
		\Q_r(t,x) \coloneqq (t - r^2, t) \times \B_r(x)
		\quad\text{and}\quad
		\Q_r^\prime(t,\hat{x}) \coloneqq (t - r^2, t) \times \B_r^\prime(\hat{x}),
	\end{align}
        respectively, and $x = (x_1, \hat{x})$ with $x_1 \in \R$ and $\hat{x} \in \R^{d - 1}$.
\end{lemma}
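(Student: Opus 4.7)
The plan is to control the double oscillation pointwise by pulling the difference inside the inner average via Jensen's inequality. Since $\barint_{\Q_r^\prime(t,\hat x)} B^{\sysalpha\sysbeta}_\tau(y_1, \hat z) \d \hat z \d \tau$ is a constant with respect to the outer integration variables, the left-hand side is dominated by
\begin{align}
  \barint_{\Q_r(t,x)} \barint_{\Q_r^\prime(t, \hat x)} |B^{\sysalpha\sysbeta}_s(y_1, \hat y) - B^{\sysalpha\sysbeta}_\tau(y_1, \hat z)| \d \hat z \d \tau \d y \d s,
\end{align}
so it suffices to produce a uniform pointwise bound on $|B^{\sysalpha\sysbeta}_s(y_1, \hat y) - B^{\sysalpha\sysbeta}_\tau(y_1, \hat z)|$ over the ranges $(s,y) \in \Q_r(t,x)$ and $(\tau, \hat z) \in \Q_r^\prime(t, \hat x)$.

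To bound this difference, I would insert the intermediate term $B^{\sysalpha\sysbeta}_\tau(y_1, \hat y)$ and use the triangle inequality to reduce matters to the two cleaner pieces
\begin{align}
  |B^{\sysalpha\sysbeta}_s(y_1, \hat y) - B^{\sysalpha\sysbeta}_\tau(y_1, \hat y)| \quad \text{and} \quad |B^{\sysalpha\sysbeta}_\tau(y_1, \hat y) - B^{\sysalpha\sysbeta}_\tau(y_1, \hat z)|.
\end{align}
The first term is estimated by $M |s-\tau|^\beta \leq M r^{2\beta}$ using that $t \mapsto B_t$ is $\beta$-Hölder with values in $\Contx^\alpha \hookrightarrow \Lx^\infty$ (and hence pointwise in $s,\tau$); the second by $M |\hat y - \hat z|^\alpha \leq M (2r)^\alpha$, since each $B_\tau$ sits in $\E(\Lambda, \lambda, \alpha, M)$ and both $\hat y$ and $\hat z$ lie within distance $r$ of $\hat x$. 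Observe that the common coordinate $y_1$ drops out by design of the partial spatial average, so only transverse Hölder regularity is invoked.

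Combining gives the uniform pointwise bound $M r^{2\beta} + M 2^\alpha r^\alpha$, and averaging preserves it. To conclude, I would pick $R_0 \in (0,1]$ small enough so that $M R_0^{2\beta} + M 2^\alpha R_0^\alpha \leq \gamma$; this depends only on $\gamma$, on the Hölder exponents $\alpha, \beta$, and on the Hölder norm $M$, as the statement requires. No substantial obstacle arises: the lemma is a direct manifestation of the joint spatial and temporal Hölder regularity of $\{B_t\}_{t\in\R}$, and the parabolic relation $2\beta + \alpha = 1$ plays no role here, only strict positivity of both exponents is used.
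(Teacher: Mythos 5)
Your proof is correct and follows essentially the same route as the paper's: pull the absolute value inside the inner average, insert the intermediate term $B^{\sysalpha\sysbeta}_\tau(y_1,\hat y)$, bound the two resulting differences by $r^{2\beta}$ (temporal Hölder regularity, uniformly in $x$) and $(2r)^\alpha$ (spatial Hölder regularity, uniformly in $t$), and then shrink $R_0$. The paper's proof is identical in substance, down to the observation that only separate (not joint) Hölder regularity in each variable is used.
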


\begin{proof}
    Let $r > 0$ and $(t,x) \in \R^{d + 1}$.
    Fix $(s,y) \in \R^{d + 1}$.
	We decompose the integrand as
	\begin{align}
		&\Bigr| B_s^{\sysalpha\sysbeta}(y_1, \hat{y}) - \barint_{\Q_r^\prime(t,\hat{x})} B^{\sysalpha\sysbeta}_\tau(y_1, \hat{z}) \d \hat{z} \d \tau \Bigr| \\
                &\quad\quad\leq
		\barint_{\Q_r^\prime(t,\hat{x})}
		\left| B^{\sysalpha\sysbeta}_s(y) - B^{\sysalpha\sysbeta}_\tau(y) \right|
		+\left| B^{\sysalpha\sysbeta}_\tau(y_1, \hat{y}) - B^{\sysalpha\sysbeta}_\tau(y_1, \hat{z}) \right| \d \hat{z}  \d \tau.
	\end{align}
	Now, for the first term, we have using regularity of $B$ that
        \begin{align}\label{eq: temporal hoelder}
		\left| B^{\sysalpha\sysbeta}_s(y) - B^{\sysalpha\sysbeta}_\tau(y) \right|
                \leq \|B^{\sysalpha\sysbeta}_s - B^{\sysalpha\sysbeta}_\tau \|_\infty
                \lesssim | s - \tau |^\eps
                \lesssim | s - t |^\eps + | t - \tau |^\eps
                \lesssim r^{2 \eps}
	\end{align}
        and, for the second term,
        \begin{align}\label{eq: spatial hoelder}
		\left| B^{\sysalpha\sysbeta}_\tau(y_1, \hat{y}) - B^{\sysalpha\sysbeta}_\tau(y_1, \hat{z}) \right|
                \lesssim | \hat{y} - \hat{z} |^\eps \| B_\tau^{\sysalpha\sysbeta} \|_{\Cont_x^\eps}
                \lesssim (2r)^\eps.
        \end{align}
        Observe that both estimates are uniform in $s$ and $y$, to calculate the average over $\Q_r(t,x)$ as
        \begin{align}
	  \barint_{\Q_r(t,x)} \Bigl| B^{\sysalpha\sysbeta}_s(y_1, \hat{y}) - \barint_{\Q_r^\prime(t,\hat{x})} B^{\sysalpha\sysbeta}_\tau(y_1, \hat{z}) \d \hat{z} \d \tau  \Bigr| \d y \d s
	  \lesssim  r^{2\eps} + (2r)^\eps,
        \end{align}
        where the implicit constant depends on the Hölder regularity of $B$ and $\eps$.
        Now, given $\gamma \in (0,\nicefrac{1}{4})$, choose $R_0 \in (0, 1]$ small enough (depending on the implicit constant) to conclude.
\end{proof}

\begin{remark}
Note that the proof of Lemma~\ref{lem: integral inequality} did not need the full mixed Hölder regularity of $\{B_t\}_{t \in \R}$.
Indeed, the calculations in the proof show that estimates \eqref{eq: temporal hoelder} and \eqref{eq: spatial hoelder} both only rely on Hölder regularity in one of the two variables, uniformly with respect to the other variable.
\end{remark}

\section{Estimates for the solution formula}
\label{Sec: estimates solution formula}

In this section, we consider a family of operators $\{ \cB_t \}_{0<t<T}$ associated with coefficients $B(t,\cdot) \in \E(\Lambda, \lambda, \eps, M)$. Moreover, $B\in \Contt^{\beta+\eps}(\Hx^{\alpha+\eps, \nicefrac{d}{\alpha}})$ if $p < \nicefrac{d}{\alpha}$ and $B\in \Contt^{\beta+\eps}(\Contx^{\alpha+\eps})$ otherwise. The prototype for such a family of operators is the family $\{ \cL_t \}_{0<t<T}$ from Section~\ref{Sec: Introduction}. First, we derive a solution formula for weak $(p,q)$-solutions to the associated non-autonomous problem. Second, we derive suitable estimates for it, which depend heavily on the regularity assumption for the coefficients. Implicit constants are throughout this section allowed to depend on $p$, $q$, $[w]_{\A_q}$, $\Lambda$, $\lambda$, $\alpha$, $\beta$, $\eps$, H\"older regularity, and dimensions.

\subsection{Representation formula by Acquistapace and Terreni}
\label{Subsec: AT formula}

For a weak $(p,q)$-solution $u$ of~\eqref{Eq: shifted non-autonomous equation}, we rely on a well-known representation formula due to Acquistapace and Terreni in $\Wx^{-1,p}$ given pointwise by
\begin{align}
\label{Eq: representation formula}
\tag{$\heartsuit$}
	u(\tt) &= \int_0^\tt \e^{-(\tt-s) (\cB_\tt + \shift)} \bigl( \cB_\tt - \cB_s \bigr) u(s) \d s + \int_0^\tt \e^{-(\tt-s) (\cB_\tt + \shift)} f(s) \d s.
\end{align}
In the unweighted situation, the proof is well-known in the literature~\cite{AT87,Blasio06,HO15,Fackler18}, but we give a streamlined version that directly works with absolute continuity.

\begin{proof}[Proof of~\eqref{Eq: representation formula}]
Consider on $[0,\tt]$ the function $v(s) = \e^{-(\tt-s) (\cB_\tt + \shift)} u(s)$. Moreover, let $0\leq \tau < \tt$. We claim the identity
\begin{align}
\label{Eq: S2 AC}
	v(\tau) = v(0) + \int_0^\tau (\cB_\tt + \shift) \e^{-(\tt-s) (\cB_\tt + \shift)} u(s) + \e^{-(\tt-s) (\cB_\tt + \shift)} u'(s) \d s.
\end{align}
Before we turn to the proof of~\eqref{Eq: S2 AC}, we show how it implies~\eqref{Eq: representation formula}. Note that the function $(\cB_\tt + \shift) \e^{-(\tt-s) (\cB_\tt + \shift)} u(s) + \e^{-(\tt-s) (\cB_\tt + \shift)} u'(s)$ is in $\Lt^q(w;\Wx^{-1,p})$, since $u$ is a weak $(p,q)$-solution (keep Remark~\ref{Rem: weak solution}~\ref{Item: weak derivative} in mind) and the semigroup is bounded on $\Wx^{-1,p}$ owing to Proposition~\ref{Prop: semigroup W-1p bounds}. Hence, by Lebesgue's theorem, we can take the limit $\tau\to \tt$ on the right-hand side of~\eqref{Eq: S2 AC}. Equally, we can take this limit on the left-hand side, owing to the facts that $u$ is uniformly continuous over $[0,\tt]$ with values in $\Wx^{-1,p}$, and the family $\{ \e^{-(\tt-s) (\cB_\tt + \shift)} \}_{0\leq s \leq \tt}$ is strongly continuous and bounded as a family of operators on $\Wx^{-1,p}$. Then, plugging in the actual definition of $v$
and using Remark~\ref{Rem: weak solution}~\ref{Item: weak derivative}
yield~\eqref{Eq: representation formula}.

Let us come back to the proof of~\eqref{Eq: S2 AC}. On the interval $[0,\tau]$, $s\mapsto \e^{-(\tt-s) (\cB_\tt + \shift)}$, considered as a family of operators on $\Wx^{-1,p}$, has a bounded derivative due to Proposition~\ref{Prop: semigroup W-1p bounds} and analyticity. As $u$ is a weak $(p,q)$-solution, $u\colon [0,\tau] \to \Wx^{-1,p}$ is likewise absolutely continuous (see~\cite[Lem.~4.1]{GV17a} for the time-weighted argument). Hence, observing $u(0)=0$, deduce~\eqref{Eq: S2 AC} from Lemma~\ref{Lem: product rule AC} below.
\end{proof}

\begin{lemma}
\label{Lem: product rule AC}
	Let $X$, $Y$ be Banach spaces, $\tau>0$, $\{ T(s) \}_{0\leq s \leq \tau}$ be a differentiable family of operators $X\to Y$ with bounded derivative, and $g \colon [0, \tau] \to X$ be absolutely continuous. Then $s\mapsto T(s)g(s) \in Y$ is an absolutely continuous function on $[0,\tau]$ with derivative $T'(s)g(s) + T(s) g'(s)$.
\end{lemma}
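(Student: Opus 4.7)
The plan is to apply the standard product-rule decomposition
\[
T(b)g(b) - T(a)g(a) = T(b)[g(b) - g(a)] + [T(b) - T(a)]g(a)
\]
twice: first to sums over disjoint subintervals (to obtain absolute continuity), then to difference quotients (to compute the derivative formula).

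First, for absolute continuity, the hypothesis that $T$ is differentiable on the compact interval $[0,\tau]$ with bounded derivative implies, via the Banach-valued mean value inequality, that $T$ is Lipschitz in operator norm with constant $L = \sup_s \|T'(s)\|_{\mathcal{L}(X,Y)}$, and in particular uniformly bounded by some constant $M$. The absolutely continuous function $g$ is continuous, hence bounded by some $G$ on $[0,\tau]$. For any finite collection of disjoint subintervals $\{(a_j,b_j)\}_j$, applying the decomposition above together with the triangle inequality gives
\[
\sum_j \|T(b_j)g(b_j) - T(a_j)g(a_j)\|_Y \leq M \sum_j \|g(b_j) - g(a_j)\|_X + L G \sum_j (b_j - a_j).
\]
Given $\varepsilon > 0$, the absolute continuity of $g$ controls the first sum once the total length $\sum_j (b_j - a_j)$ is made small enough, while the same smallness controls the second sum directly; this is precisely the definition of absolute continuity for $s \mapsto T(s)g(s)$.

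Second, I would compute the derivative at any $s_0 \in [0,\tau]$ at which $g$ is differentiable (almost every point, by absolute continuity of $g$). Dividing the decomposition by $h$ yields
\[
\frac{T(s_0+h)g(s_0+h) - T(s_0)g(s_0)}{h} = T(s_0+h) \cdot \frac{g(s_0+h) - g(s_0)}{h} + \frac{T(s_0+h) - T(s_0)}{h} \cdot g(s_0).
\]
As $h \to 0$, the first summand tends to $T(s_0)g'(s_0)$ in $Y$, thanks to strong continuity of $T$ together with the uniform bound $M$ and norm convergence of the difference quotient of $g$ in $X$; the second summand tends to $T'(s_0)g(s_0)$ by the assumed operator-norm differentiability of $T$ at $s_0$, evaluated at the fixed vector $g(s_0) \in X$. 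Combined with the absolute continuity established in the first step, this identifies the almost-everywhere derivative as $T'(s)g(s) + T(s)g'(s)$, proving the lemma. There is no substantial obstacle; the only item warranting a moment of care is the Banach-valued mean value inequality used to promote pointwise differentiability with bounded derivative into global Lipschitz continuity of $T$ in operator norm.
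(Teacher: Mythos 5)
Your argument is correct, but it takes a genuinely different route from the paper's. You verify the $\eps$--$\delta$ definition of absolute continuity directly (using that $T$ is Lipschitz in operator norm by the mean value inequality and that $g$ is absolutely continuous, hence bounded) and then compute the derivative pointwise via the two-term difference-quotient decomposition. The paper instead writes $T(s)=T(0)+\int_0^s T'(u)\d u$ and $g(s)=g(0)+\int_0^s g'(u)\d u$, expands $\int_0^\tau T'(s)g(s)+T(s)g'(s)\d s$, and collapses the resulting double integral by the Fubini--Tonelli theorem to obtain $T(\tau)g(\tau)-T(0)g(0)$; run on $[0,\sigma]$ for every $\sigma$, this produces the integral representation~\eqref{Eq: S2 AC} --- which is what is actually used downstream --- in one stroke. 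Two points in your version deserve a word of care. First, for a general Banach space $X$, $\eps$--$\delta$ absolute continuity of $g$ does not imply almost-everywhere differentiability (that requires the Radon--Nikod\'ym property), so your phrase ``almost every point, by absolute continuity of $g$'' should be read with the convention, implicit in the statement and used by the paper, that $g$ admits an integrable density $g'$ with $g(s)=g(0)+\int_0^s g'(u)\d u$; in the application $X=\Wx^{-1,p}$ is reflexive, so nothing is lost. Second, having shown that $Tg$ is absolutely continuous and a.e.\ differentiable with derivative $T'g+Tg'$, you still need the vector-valued fundamental theorem of calculus to recover the integral identity the paper requires; this follows by testing against functionals in $Y^*$, but it is an extra step that the paper's Fubini computation avoids.
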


\begin{proof}
	The assumption on $T$ implies in particular that $s\mapsto T(s)$ is absolutely continuous on $[0,\tau]$. Now, use absolute continuity of both $T$ and $g$, and the Fubini--Tonelli theorem, to give
	\begin{align}
		&\int_0^\tau T'(s) g(s) + T(s) g'(s) \d s \\
		={} &\int_0^\tau T'(s) \Bigl( g(0) + \int_0^s g'(u) \d u \Bigr) \d s + \int_0^\tau \Bigl( T(0) + \int_0^s T'(u) \d u \Bigr) g'(s) \d s \\
		={} &\int_0^\tau T'(s) \d s \, g(0) + T(0) \int_0^\tau g'(s) \d s + \int_0^\tau \int_0^\tau T'(s) g'(u) \d u \d s.
	\end{align}
	All remaining integrals can now be evaluated using absolute continuity, and we only remain with $T(\tau)g(\tau) - T(0)g(0)$ after having canceled all superfluous terms. Rearranging terms gives the claim.
\end{proof}

Motivated by~\eqref{Eq: representation formula}, we are going to consider the operators
\begin{align}
\label{Eq: Def S1 and S2}
\begin{split}
	S_1(u)(\tt) \mapsto &\int_0^\tt (\cB_\tt + \shift) \e^{-(\tt-s) (\cB_\tt + \shift)} (\cB_\tt - \cB_s) u(s) \d s, \\
	S_2(f)(\tt) \mapsto &(\cB_\tt + \shift) \int_0^\tt \e^{-(\tt-s) (\cB_\tt + \shift)} f(s) \d s.
\end{split}
\end{align}
Up to some technicalities, boundedness of $S_1$ and $S_2$ will lead to the maximal regularity estimate for $u$ later on in Section~\ref{Sec: proof main result}.

\subsection{Estimates for the kernel of \texorpdfstring{\boldsymbol{$S_1$}}{S1}}

The following lemma is simple, but central in our argument, as it is the only result that uses the full \emph{simultaneous regularity} in the spatial and temporal variables.

\begin{lemma}
\label{Lem: estimate kernel of S1 part 1}
	Let $s\in (0, \tt)$.
	The operator $\cB_\tt - \cB_s$ acts as a bounded operator $\Wx^{1+\alpha,p} \to \Wx^{-1+\alpha,p}$ along with the estimate
	\begin{align}
		\| \cB_\tt - \cB_s \|_{\Wx^{1+\alpha,p} \to \Wx^{-1+\alpha,p}} \lesssim |\tt-s|^{\beta+\eps}.
	\end{align}
\end{lemma}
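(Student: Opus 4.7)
The operator in question is just $\cB_\tt - \cB_s = -\div_x C \nabla_x$ with coefficient difference $C \coloneqq B(\tt, \cdot) - B(s, \cdot)$. The temporal regularity assumption translates into the pointwise bound $\|C\|_{\Contx^{\alpha+\eps}} \leq M |\tt - s|^{\beta + \eps}$. My plan is to factor this operator as the composition
\begin{align}
\Wx^{1+\alpha,p} \xrightarrow{\nabla_x} \Wx^{\alpha,p} \xrightarrow{C \,\cdot\,} \Wx^{\alpha,p} \xrightarrow{\div_x} \Wx^{-1+\alpha,p}
\end{align}
of bounded linear maps. All the time dependence is absorbed into the norm of the middle map.

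First, I would use the form interpretation to pass to duality: for $u \in \Wx^{1+\alpha,p}$ and a test function $v \in \Wx^{1-\alpha, p'}$ we have
\begin{align}
\langle (\cB_\tt - \cB_s) u, v \rangle = \int_{\R^d} C(x) \nabla_x u(x) \cdot \overline{\nabla_x v(x)} \d x.
\end{align}
Second, I would invoke the lifting property of Bessel potential spaces to see that $\nabla_x$ is bounded from $\Wx^{1+\alpha,p}$ into $\Wx^{\alpha,p}$ and from $\Wx^{1-\alpha,p'}$ into $\Wx^{-\alpha,p'}$. Third, I would use that multiplication by $C$ is a bounded operator on $\Wx^{\alpha,p}$ with operator norm controlled by $\|C\|_{\Contx^{\alpha+\eps}}$; this is the classical pointwise multiplier theorem for Bessel potential spaces (see, e.g., Triebel's monograph). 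Combining these three facts with the duality pairing between $\Wx^{\alpha,p}$ and $\Wx^{-\alpha,p'}$, I arrive at
\begin{align}
|\langle (\cB_\tt - \cB_s) u, v \rangle|
&\lesssim \| C \|_{\Contx^{\alpha+\eps}} \| u \|_{\Wx^{1+\alpha,p}} \| v \|_{\Wx^{1-\alpha,p'}} \\
&\lesssim |\tt - s|^{\beta + \eps} \| u \|_{\Wx^{1+\alpha,p}} \| v \|_{\Wx^{1-\alpha,p'}},
\end{align}
and taking the supremum over $v$ in the unit ball of $\Wx^{1-\alpha,p'}$ yields the claimed bound.

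The main obstacle is the pointwise multiplier step: a $\Contx^\alpha$ coefficient would in general not multiply $\Wx^{\alpha,p}$ into itself, so the strictly positive $\eps$ is genuinely needed here, and this is exactly where the hypothesis $\Contt^{\beta+\eps}(\Contx^{\alpha+\eps})$ (rather than merely $\Contt^{\beta}(\Contx^{\alpha})$) is consumed. Everything else is standard functional-analytic bookkeeping around the lifting and duality properties of Bessel potential spaces.
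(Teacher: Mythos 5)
Your proposal is correct and follows essentially the same route as the paper: both reduce to the duality pairing $\langle (\cB_\tt - \cB_s)u, v\rangle = \int C\,\nabla u\cdot\overline{\nabla v}$, invoke the pointwise multiplier property of $\Contx^{\alpha+\eps}$ on $\Wx^{\alpha,p}$ with operator norm controlled by the H\"older norm of $C = B(\tt,\cdot)-B(s,\cdot)$, and conclude via the $\Wx^{\alpha,p}$--$\Wx^{-\alpha,p'}$ duality and the temporal H\"older bound $\|C\|_{\Contx^{\alpha+\eps}}\lesssim |\tt-s|^{\beta+\eps}$. Your closing remark that the strictly positive $\eps$ is what makes the multiplier step work is exactly the point the paper also emphasizes.
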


\begin{proof}
	Let $s\in (0, \tt)$ and $f\in \Wx^{1+\alpha, p} \cap \Wx^{1,2}$.
	Put $X = \Hx^{\alpha+\eps,\nicefrac{d}{\alpha}}$ if $p < \nicefrac{d}{\alpha}$ and $X = \Contx^{\alpha+\eps}$ otherwise,
	and recall from Lemma~\ref{Lem: multiplier} that an $X$-function is a multiplier on the space $\Wx^{\alpha,p}$, and that its operator norm can be controlled by its $X$-norm. Hence, for $g\in \Wx^{1-\alpha, p'} \cap \Wx^{1,2}$, estimate
	\begin{align}
		|\langle (\cB_\tt - \cB_s) f, g \rangle| &= \bigl|\int_{\R^d} (B(\tt, x) - B(s, x)) \nabla f(x) \cdot \overline{\nabla g(x)} \d x \bigr| \\
		&\leq \| (B(\tt, \cdot) - B(s, \cdot)) \nabla f \|_{\Wx^{\alpha,p}} \| \nabla g \|_{\Wx^{-\alpha, p'}} \\
		&\lesssim \| B(\tt, \cdot) - B(s, \cdot) \|_X \| \nabla f \|_{\Wx^{\alpha,p}} \| g \|_{\Wx^{1-\alpha, p'}}.
	\end{align}
	Using the regularity of $A$ and duality, we deduce
	\begin{align}
		\| (\cB_\tt - \cB_s) f \|_{\Wx^{-1+\alpha,p}} \lesssim |\tt-s|^{\beta+\eps} \| \nabla f \|_{\Wx^{\alpha,p}} \leq |\tt-s|^{\beta+\eps} \| f \|_{\Wx^{1+\alpha,p}}. &\qedhere
	\end{align}
\end{proof}

\begin{lemma}
\label{Lem: estimate kernel of S1 part 2}
	Let $s \in (0,\tt)$. The operator $(\cB_\tt + \shift) \e^{-(\tt-s) (\cB_\tt + \shift)}$ acts as a bounded operator $\Wx^{-1+\alpha,p} \to \Lx^p$, and satisfies the estimate
	\begin{align}
		\| (\cB_\tt + \shift) \e^{-(\tt-s) (\cB_\tt + \shift)} \|_{\Wx^{-1+\alpha,p} \to \Lx^p} \lesssim |\tt-s|^{-\beta-1}.
	\end{align}
\end{lemma}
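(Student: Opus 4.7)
Thanks to the parabolic scaling $2\beta + \alpha = 1$, one has $\beta + 1 = (3-\alpha)/2$, and I propose to factor
\[
(\cB_\tt + \shift) \e^{-(\tt-s)(\cB_\tt + \shift)} = \bigl[ (\cB_\tt + \shift)^{(3-\alpha)/2} \e^{-(\tt-s)(\cB_\tt + \shift)} \bigr] \circ (\cB_\tt + \shift)^{-(1-\alpha)/2}.
\]
The plan is to bound the rightmost factor from $\Wx^{-1+\alpha,p}$ into $\Lx^p$ (uniformly in $\tt$), and to bound the left factor on $\Lx^p$ with operator norm $\lesssim |\tt-s|^{-(3-\alpha)/2} = |\tt-s|^{-\beta-1}$. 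All fractional powers are taken with respect to the sectorial functional calculus of compatible realizations of $\cB_\tt + \shift$ on the scale ranging from $\Wx^{-1,p}$ up to $\Lx^p$, and the identity above is first established on a dense subspace such as $\Wx^{-1+\alpha,p} \cap \Wx^{-1,2}$ and extended by continuity.

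For the negative fractional power, I would argue via the adjoint: since $\E(\Lambda, \lambda, \eps, \M)$ is stable under taking adjoints, Theorem~\ref{Thm: square root Lp bounds} provides the compatible isomorphism $(\cB_\tt^* + \shift)^{\nicefrac{1}{2}} \colon \Wx^{1,p'} \to \Lx^{p'}$. Interpolating this isomorphism with the identity on $\Lx^{p'}$ (via Stein interpolation, or equivalently through the bounded sectorial functional calculus of the invertible operator $\cB_\tt^* + \shift$) at parameter $1-\alpha$ yields boundedness of $(\cB_\tt^* + \shift)^{(1-\alpha)/2} \colon \Wx^{1-\alpha,p'} \to \Lx^{p'}$, whose inverse $(\cB_\tt^* + \shift)^{-(1-\alpha)/2} \colon \Lx^{p'} \to \Wx^{1-\alpha,p'}$ is therefore also bounded. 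By $\Lx^p$--$\Lx^{p'}$ duality and the duality pairing between $\Wx^{-1+\alpha,p}$ and $\Wx^{1-\alpha,p'}$,
\[
\bigl\| (\cB_\tt + \shift)^{-(1-\alpha)/2} f \bigr\|_{\Lx^p} \lesssim \|f\|_{\Wx^{-1+\alpha,p}} \sup_{\|g\|_{\Lx^{p'}} \leq 1} \bigl\| (\cB_\tt^* + \shift)^{-(1-\alpha)/2} g \bigr\|_{\Wx^{1-\alpha,p'}} \lesssim \|f\|_{\Wx^{-1+\alpha,p}},
\]
which is the required mapping property.

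For the left-hand factor, Proposition~\ref{Prop: R-sectoriality} together with Corollary~\ref{Cor: R-bounded resolvent} supplies uniform sectoriality on $\Lx^p$ of the part $B_p^\shift$ of $\cB_\tt + \shift$. The standard sectorial estimate, obtained from the contour integral representation against the resolvent, then yields
\[
\bigl\| (B_p^\shift)^{(3-\alpha)/2} \e^{-(\tt-s) B_p^\shift} \bigr\|_{\Lx^p \to \Lx^p} \lesssim |\tt-s|^{-(3-\alpha)/2},
\]
with constant uniform in $\tt$. Composing the two bounds across the factorization delivers the claim. The main technical obstacle is verifying the compatibility between fractional powers of the $\Wx^{-1,p}$- and $\Lx^p$-realizations of $\cB_\tt + \shift$, so that the above factorization is indeed meaningful on a common dense subspace; this is standard for parts of a single compatible sectorial operator, but needs to be invoked carefully via the Kato property of Theorem~\ref{Thm: square root Lp bounds} combined with the invertibility ensured by the shift $\shift$.
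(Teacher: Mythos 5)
Your factorization
\begin{align}
	(\cB_\tt + \shift)\,\e^{-(\tt-s)(\cB_\tt+\shift)} = \bigl[(\cB_\tt+\shift)^{\beta+1}\e^{-(\tt-s)(\cB_\tt+\shift)}\bigr]\,(\cB_\tt+\shift)^{-\nicefrac{(1-\alpha)}{2}}
\end{align}
is consistent ($\beta+1=\nicefrac{(3-\alpha)}{2}$), and the sectorial bound $\|(B_p^\shift)^{\beta+1}\e^{-(\tt-s)B_p^\shift}\|_{\Lx^p\to\Lx^p}\lesssim|\tt-s|^{-\beta-1}$ is unproblematic given the uniform sectoriality from Section~2. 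The gap is in the mapping property of the negative fractional power. Your duality argument requires $(\cB_\tt^*+\shift)^{-\nicefrac{(1-\alpha)}{2}}\colon\Lx^{p'}\to\Wx^{1-\alpha,p'}$ to be bounded, i.e.\ the continuous inclusion $\dom\bigl((\cB_\tt^*+\shift)^{\nicefrac{(1-\alpha)}{2}}\bigr)\hookrightarrow\Wx^{1-\alpha,p'}$. First, this does \emph{not} follow from boundedness of $(\cB_\tt^*+\shift)^{\nicefrac{(1-\alpha)}{2}}\colon\Wx^{1-\alpha,p'}\to\Lx^{p'}$ (the opposite inclusion): passing to the inverse needs the restriction to $\Wx^{1-\alpha,p'}$ to be \emph{onto} $\Lx^{p'}$, so you in fact need the two-sided identification $\dom\bigl((\cB_\tt^*+\shift)^{\nicefrac{(1-\alpha)}{2}}\bigr)=[\Lx^{p'},\Wx^{1,p'}]_{1-\alpha}$. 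Second, that identification is precisely the statement $\dom(T^\theta)=[X,\dom(T)]_\theta$ for a sectorial operator, which requires bounded imaginary powers of $\cB_\tt^*+\shift$ on $\Lx^{p'}$ — this is also exactly what the boundary estimates in your proposed Stein interpolation amount to, and "the bounded sectorial functional calculus" is not available for free. Without BIP one only has the Komatsu sandwich $(X,\dom(T))_{\theta,1}\subseteq\dom(T^\theta)\subseteq(X,\dom(T))_{\theta,\infty}$, whose endpoints are Besov spaces $\mathrm{B}^{1-\alpha}_{p',1}$ and $\mathrm{B}^{1-\alpha}_{p',\infty}$, neither of which coincides with the Bessel potential space $\Wx^{1-\alpha,p'}$ for $p'\neq 2$. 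The paper neither proves nor cites BIP (or a bounded $\H^\infty$-calculus) on $\Lx^p$ for these operators; it is plausibly obtainable from the $\Lx^2$-calculus and the off-diagonal estimates of Section~2, but that is a substantive extra ingredient, not a formality.

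The paper's own proof is arranged specifically to avoid this issue. It first passes to $\|(\cB_\tt+\shift)^{\nicefrac{3}{2}}\e^{-(\tt-s)(\cB_\tt+\shift)}f\|_{\Wx^{-1,p}}$ using~\eqref{Eq: square root for negative regularity order}, represents this via a double contour integral, and bounds the factor $\|z^{1-\nicefrac{\alpha}{2}}(\cB_\tt+\shift)\e^{-z(\cB_\tt+\shift)}f\|_{\Wx^{-1,p}}$ by the $(\nicefrac{\alpha}{2},\infty)$-real interpolation norm of $f$ between $\Wx^{-1,p}$ and $\dom(\cB_\tt+\shift)$ via the semigroup characterization in Proposition~\ref{Prop: comparability real interpolation spaces}; the only interpolation input is the always-valid one-sided embedding $[\Wx^{-1,p},\Wx^{1,p}]_{\nicefrac{\alpha}{2}}\hookrightarrow(\Wx^{-1,p},\Wx^{1,p})_{\nicefrac{\alpha}{2},\infty}$, so no identification of fractional power domains is needed. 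To repair your argument, either establish BIP on $\Lx^p$ or replace the step involving $(\cB_\tt+\shift)^{-\nicefrac{(1-\alpha)}{2}}$ by a real-interpolation estimate of this type.
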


\begin{proof}
	By duality, it suffices to show that $((B_\tt)^* + \shift) \e^{-(\tt-s) ((B_\tt)^* + \shift)}$ maps $\Lx^{p'} \to \Wx^{1-\alpha,p'}$ with norm controlled by $|\tt-s|^{-\beta-1}$. We are going to show that $((B_\tt)^* + \shift) \e^{-(\tt-s) ((B_\tt)^* + \shift)}$ maps $\Lx^{p'} \to \Lx^{p'}$ with estimate against $|\tt-s|^{-1}$ and $\Lx^{p'} \to \Wx^{1,p'}$ with norm controlled by $|\tt-s|^{-\nicefrac{3}{2}}$. Then the claim is a consequence of complex interpolation, keeping the relation $\nicefrac{(\alpha-3)}{2} = -\beta-1$ in mind.

	Note that as a consequence of Theorem~\ref{Thm: Lp bounds semigroup}, the $\mathrm{H}^\infty$-calculus of $(B_\tt)^*$ is bounded on $\Lx^{p'}$ with control of the implicit constants. Hence, in conjunction with Theorem~\ref{Thm: square root Lp bounds}, calculate
	\begin{align}
		\| ((B_\tt)^* + \shift) \e^{-(\tt-s) ((B_\tt)^* + \shift)} f \|_{\Wx^{1,p'}} &\lesssim \| ((B_\tt)^* + \shift)^\frac{3}{2} \e^{-(\tt-s) ((B_\tt)^* + \shift)} f \|_{\Lx^{p'}} \\
		&= (\tt-s)^{-\nicefrac{3}{2}} \| [\z^{\nicefrac{3}{2}} \e^{-\z}]((\tt-s)((B_\tt)^*+\shift)) f \|_{\Lx^{p'}} \\
		&\lesssim (\tt-s)^{-\nicefrac{3}{2}} \| f\|_{\Lx^{p'}}.
	\end{align}
	The calculation for $\Lx^{p'} \to \Lx^{p'}$ is similar. This completes the proof.
\end{proof}

\subsection{Boundedness of \texorpdfstring{\boldsymbol{$S_2$}}{S2}}
\label{Subsec: S2 bdd}

Recall the operator $S_2$ from~\eqref{Eq: Def S1 and S2}.
The aim of this subsection is to show the following.
\begin{proposition}
\label{Prop: S2 bdd}
	Let $p,q\in (1,\infty)$, $w\in \A_q$, then one has the estimate
	\begin{align}
	\label{Eq: S2 boundedness estimate}
		\| S_2 f \|_{\Lt^q(w;\Lx^p)} \lesssim \| f \|_{\Lt^q(w;\Lx^p)} \qquad (f\in \Cont_0^\infty(\Lx^p\cap \Lx^2)).
	\end{align}
	Implicit constants only depend on the quantities from Agreement~\ref{Agreement: fix constants}.
\end{proposition}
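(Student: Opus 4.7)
Set $T_\tt := \cB_\tt + \shift$. The strategy is to express $S_2$ as an operator-valued pseudo-differential operator in the time variable and then apply the Fourier multiplier theorem of Hyt\"onen and Portal~\cite{HP}. Fix $f \in \Cont_0^\infty(\Lx^p \cap \Lx^2)$, extended by zero to all of $\R$. Since $T_\tt$ is sectorial on $\Lx^p$ of angle strictly less than $\nicefrac{\pi}{2}$ (Proposition~\ref{Prop: R-sectoriality}), the Laplace transform
\begin{align}
(2\pi \imag \tau + T_\tt)^{-1} = \int_0^\infty \e^{-2\pi \imag \tau s} \e^{-s T_\tt} \d s
\end{align}
converges absolutely in operator norm for every $\tau \in \R$, and together with Fourier inversion for $f$ this gives
\begin{align}
\int_0^\tt \e^{-(\tt-s) T_\tt} f(s) \d s = \int_\R \e^{2\pi \imag \tau \tt} (2\pi \imag \tau + T_\tt)^{-1} \hat{f}(\tau) \d \tau.
\end{align}
Applying $T_\tt$ to both sides and using the algebraic identity $T_\tt (2\pi \imag \tau + T_\tt)^{-1} = I - 2\pi \imag \tau (2\pi \imag \tau + T_\tt)^{-1}$, we arrive at
\begin{align}
S_2 f(\tt) = f(\tt) - \int_\R \e^{2\pi \imag \tau \tt} \, m(\tt, \tau) \, \hat{f}(\tau) \d \tau,
\end{align}
with the operator-valued symbol $m(\tt, \tau) := 2\pi \imag \tau (2\pi \imag \tau + T_\tt)^{-1}$.

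The first summand is trivially controlled by $\| f \|_{\Lt^q(\Lx^p)}$. For the second, the main theorem of~\cite{HP} provides $\Lt^q(\Lx^p)$-boundedness of the operator-valued pseudo-differential operator with symbol $m$, provided that $m$ is an \emph{$R$-Yamazaki symbol}. Establishing this fact is precisely the content of Lemma~\ref{Lem: R-Yamazaki}, and it rests on two ingredients: \emph{(i)} the uniform $R$-boundedness of the family $\{ m(\tt, \tau) : \tt \in (0,T),\ \tau \in \R \}$ on $\Lx^p$, which is a direct consequence of Corollary~\ref{Cor: R-bounded resolvent} applied with $z = 2\pi \imag \tau$; and \emph{(ii)} a Hölder-type control of $\tt \mapsto m(\tt,\tau)$, obtained by plugging the $\Contt^\eps(\Lx^\infty)$-regularity of the coefficients into the resolvent identity
\begin{align}
m(\tt_1,\tau) - m(\tt_2,\tau) = m(\tt_1,\tau) (\cB_{\tt_2} - \cB_{\tt_1}) (2\pi \imag \tau + T_{\tt_2})^{-1}.
\end{align}

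The Fourier calculation of the first paragraph is essentially formal, the only subtlety being the exchange of $T_\tt$ with the integral -- legitimate thanks to the smoothness of $f$, the decay of $\hat{f}$, and the uniform resolvent bounds. The genuine difficulty therefore lies entirely in the $R$-Yamazaki verification, which ultimately traces back to the uniform $R$-sectoriality of Corollary~\ref{Cor: R-bounded resolvent} (built on the off-diagonal and uniform semigroup bounds of Sections~\ref{Subsec: elliptic operators}--\ref{Subsec: uniform R-bounds}) and to the assumption that the coefficients are $\Contt^\eps$ in time with values in $\Lx^\infty$. Once Lemma~\ref{Lem: R-Yamazaki} is available, Proposition~\ref{Prop: S2 bdd} is immediate.
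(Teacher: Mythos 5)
Your argument is correct and follows essentially the same route as the paper: extend $f$ by zero, use the Laplace-transform representation of the resolvent to rewrite $S_2$ as an operator-valued pseudo-differential operator with symbol $2\pi\imag\tau(2\pi\imag\tau+\cB_s+\shift)^{-1}$ (after peeling off the identity term), verify the $R$-Yamazaki condition via Lemma~\ref{Lem: R-Yamazaki}, and conclude with~\cite{HP}. The only cosmetic difference is that your sketch of the H\"older control in $t$ via the plain resolvent identity glosses over the fact that $\cB_{\tt_2}-\cB_{\tt_1}$ only maps $\Wx^{1,p}\to\Wx^{-1,p}$, so the actual verification (as in Lemma~\ref{Lem: R-Yamazaki}) requires sandwiching this difference between square roots -- but since you defer to that lemma for the verification, this is not a gap.
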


In the unweighted case, it is well-known in the literature~\cite{HM00,PS06,Fackler18} that such bounds for $S_2$ follow from the boundedness of some pseudo-differential operator with operator-valued kernel. For the reader's convenience, we include a proof. For further background on such pseudo-differential operators, the reader may consult~\cite{HP,PS06} and the references therein. In Remark~\ref{Rem: pseudo weighted}, we will comment on the extension to the weighted setting.

For technical reasons, we extend $f$ by $0$ outside of $(0,T)$, and we extend the operator family $\{ \cB_t \}_{0<t<T}$ to $\R$ constantly at the endpoints (we performed the same extension already in Section~\ref{Sec: existence weak solutions}). Using the vector-valued Fourier transform $\FT$ (see~\cite[Sec.~2.4.c]{AiBSVolI} for further information) and the Fubini--Tonelli theorem (its application is justified by integrability of $\FT f$ and exponential decay of the semigroup), calculate
\begin{align}
\begin{split}
\label{Eq: S2 identity I}
	\int_0^\tt \e^{-(\tt-s) (\cB_\tt + \shift)} f(s) \d s &= \int_{-\infty}^\infty \e^{-(\tt-s) (\cB_\tt + \shift)} \ind_{[0,\infty)}(\tt-s) f(s) \d s \\
	&= \int_{-\infty}^\infty \e^{-(\tt-s) (\cB_\tt + \shift)} \ind_{[0,\infty)}(\tt-s) \int_{-\infty}^\infty \FT f(\tau) \e^{2\pi i s \tau} \d \tau \d s \\
	&= \int_{-\infty}^\infty \int_{-\infty}^\infty \e^{-(\tt-s) (\cB_\tt + \shift)} \ind_{[0,\infty)}(\tt-s) \e^{2\pi i s \tau} \d s \, \FT f(\tau) \d \tau \\
	&= \int_{-\infty}^\infty \I(\tau, \tt) \FT f(\tau) \d \tau,
\end{split}
\end{align}
where $\I(\tau, \tt)$ is implicitly defined by the latest identity. Using the transformation $u=\tt-s$ and the relationship between a semigroup and its generator in virtue of the Laplace transform (apply for instance~\cite[Prop.~3.4.1 d)]{Haase} to $\cB_\tt + \nicefrac{\shift}{2}$), deduce
\begin{align}
	\I(\tau, \tt) = \e^{2\pi i \tau \tt} \int_0^\infty \e^{-u (\cB_\tt + \shift)} \e^{-2\pi i u \tau} \d u = \e^{2\pi i \tau \tt} (2\pi i \tau + (\cB_\tt + \shift))^{-1}.
\end{align}
Plug this back into~\eqref{Eq: S2 identity I} to conclude with the definition of $S_2$ that
\begin{align}
\label{Eq: S2 identity II}
	S_2(f)(\tt) = (\cB_\tt + \shift)  \int_{-\infty}^\infty (2\pi i \tau + (\cB_\tt + \shift))^{-1} \FT f(\tau) \e^{2\pi i \tau \tt} \d \tau.
\end{align}
The integral $\int_{-\infty}^\infty \| (\cB_\tt + \shift) (2\pi i \tau + (\cB_\tt + \shift))^{-1} \FT f(\tau) \e^{2\pi i \tau \tt } \|_{\Lx^2} \d \tau$ is finite, so we can commute $(\cB_\tt + \shift)$ with the integral in~\eqref{Eq: S2 identity II} owing to Hille's theorem.
This means that $S_2(f)$ can be represented as the pseudo-differential operator with symbol $$(\tau,s) \mapsto (\cB_s + \shift) (2\pi i \tau + (\cB_s + \shift))^{-1}.$$ Of course, by expansion, we can equally study boundedness of the pseudo-differential operator associated with the symbol $(\tau, s) \mapsto 2\pi i \tau (2\pi i \tau + (\cB_s + \shift))^{-1}$. In the following lemma, we study this symbol thoroughly.
\begin{lemma}
\label{Lem: R-Yamazaki}
	For all $\ell \geq 0$, the symbol $a(\tau, s) = 2\pi i \tau (2\pi i \tau + (\cB_s + \shift))^{-1}$ is in $\mathcal{S}^0_{1,0}(\eps, \ell, \Lx^p)$, that is to say, there is some constant $C>0$ such that, for $k=0,\dots,\ell$, one has the $R$-bound
	\begin{align}
		\Rbound \bigl\{ (1+|\tau|)^k \partial_\tau^k a(\tau, s) \SetSep s,\tau \in \R \bigr\} \leq C,
	\end{align}
	and for $s,h,\tau \in \R$ one has the regularity condition
	\begin{align}
		\bigl\| \partial_\tau^k \bigl[ a(\tau, s) - a(\tau, s+h) \bigr] \bigr\|_{\Lx^p \to \Lx^p} \leq C |h|^\eps (1+|\tau|)^{-k}.
	\end{align}
\end{lemma}
\begin{proof}
	For brevity, we rescale $2 \pi \tau$ to $\tau$ in the definition of the symbol $a$.
	Fix $\varphi \in (\omega, \nicefrac{\pi}{2})$ and $\psi \in (\nicefrac{\pi}{2}, \pi)$ such that $\varphi + \psi < \pi$, and let $s,h\in \R$. Define on $\Sec_\psi \cup \B(0,\nicefrac{\shift}{2})$ the function $A(\lambda) = (1+\lambda) \bigl[ (\lambda + (\cB_s + \shift))^{-1} - (\lambda + (\cB_{s+h} + \shift))^{-1} \bigr]$.

	\textbf{Step 1}: Reduction to the case $k=0$. Since the function $A$ is holomorphic in $\lambda$ and defined on a sector that strictly includes the half-plane as well as a ball around the origin of fixed radius, the regularity condition reduces, as a consequence of Cauchy's formula for derivatives, to boundedness of $A$, which is the case $k=0$ (but in a larger region). With a similar auxiliary function, the same is true for the $R$-boundedness condition, see~\cite[Ex.~2.16]{KW}.

	\textbf{Step 2}: Verification of the case $k=0$. The $R$-boundedness condition follows directly from Corollary~\ref{Cor: R-bounded resolvent}. Hence, it only remains to show that the function $A$ is bounded in operator norm with control against $|h|^\eps$. For $\lambda\in \Sec_\psi \cup \B(0,\nicefrac{\shift}{2})$, expand $A(\lambda)$ using the functional calculus as
		\begin{align}
		 (1+\lambda)^\frac{1}{2} \left[ \frac{\z^\frac{1}{2}}{\lambda + \z + \nicefrac{\shift}{2}} \right](B_s + \nicefrac{\shift}{2}) \tag{F1}\label{eq:F1}\\
		 \times{} (\cB_s + \nicefrac{\shift}{2})^{-\frac{1}{2}} \bigl[ \cB_{s+h} - \cB_s \bigr] (\cB_{s+h} + \nicefrac{\shift}{2})^{-\frac{1}{2}} \tag{F2}\label{eq:F2}\\
		\times{} (1+\lambda)^\frac{1}{2} \left[ \frac{\z^\frac{1}{2}}{\lambda + \z + \nicefrac{\shift}{2}} \right](B_{s+h} + \nicefrac{\shift}{2}). \tag{F3}\label{eq:F3}
		\end{align}
	Using composition, we can treat all three factors separately. The decay in $|h|$ comes from \eqref{eq:F2}, whereas the other two are merely bounded. Moreover, \eqref{eq:F1} and \eqref{eq:F3} have the same structure, so we only present the estimate for \eqref{eq:F1}. Recall that, according to Remark~\ref{Rem: DK works with half shift}, all results from Section~\ref{Sec: Elliptic} can be applied to $\cB_s + \nicefrac{\shift}{2}$.

	Define on $\Sec_\varphi$ the function $g_\lambda=\z^\frac{1}{2} (\lambda + \z + \nicefrac{\shift}{2})^{-1}$. As a consequence of Theorem~\ref{Thm: Lp bounds semigroup}, the $\mathrm{H}^\infty$-calculus of $B_s + \nicefrac{\shift}{2}$ is bounded on $\Lx^p$, compare with the proof of Lemma~\ref{Lem: estimate kernel of S1 part 2}. This means that we have to bound $\| g_\lambda \|_\infty$ in an appropriate way. Using the reverse triangle inequality on sectors and the case distinction $|z| \geq |\lambda| + \nicefrac{\shift}{2}$ and $|z| \leq |\lambda| + \nicefrac{\shift}{2}$, we indeed find readily $\| g_\lambda \|_\infty \lesssim (\nicefrac{\shift}{2}+|\lambda|)^{-\nicefrac{1}{2}}$. Using $(\nicefrac{\shift}{2}+|\lambda|)^{-\nicefrac{1}{2}} \approx (1+|\lambda|)^{-\nicefrac{1}{2}}$, the factor in front of $g_\lambda(\cB_s + \nicefrac{\shift}{2})$ cancels out, which completes the treatment of~\eqref{eq:F1}.

	It remains to treat \eqref{eq:F2}.
	The crucial ingredient is the estimate
	\begin{align}
	\label{Eq: Hoelder for elliptic operators}
		\| \cB_{s} - \cB_{s+h} \|_{\Wx^{1,p} \to \Wx^{-1,p}} \lesssim |h|^\eps,
	\end{align}
	whose proof follows the lines of Lemma~\ref{Lem: estimate kernel of S1 part 1}, but it suffices to have coefficients in $\Cont^\eps(\R; \Lx^\infty)$.
	Recall from Theorem~\ref{Thm: square root Lp bounds} the estimate $\| (\cB_s + \nicefrac{\shift}{2})^{-\frac{1}{2}} f \|_{\Wx^{1,p}} \lesssim \| f \|_{\Lx^p}$ for $f\in \Lx^p \cap \Lx^2$. The same estimate holds of course if $\cB_s$ and $p$ are replaced by $(\cB_s)^*$ and $p'$. Hence, we can estimate by duality and using~\eqref{Eq: Hoelder for elliptic operators} that, for $g \in \Lx^{p'}$,
	\begin{align}
		&|( (\cB_s + \nicefrac{\shift}{2})^{-\frac{1}{2}} (\cB_{s+h} - \cB_s) (\cB_{s+h} + \nicefrac{\shift}{2})^{-\frac{1}{2}} f \SP g)| \\
		={} &|( (\cB_{s+h} - \cB_s) (\cB_{s+h} + \nicefrac{\shift}{2})^{-\frac{1}{2}} f \SP ((\cB_s)^* + \nicefrac{\shift}{2})^{-\frac{1}{2}} g)| \\
		\leq{} &\| (\cB_{s+h} - \cB_s) (\cB_{s+h} + \nicefrac{\shift}{2})^{-\frac{1}{2}} f \|_{\Wx^{-1,p}} \| ((\cB_s)^* + \nicefrac{\shift}{2})^{-\frac{1}{2}} g \|_{\Wx^{1,p'}} \\
		\lesssim{} &|h|^\eps \| (\cB_{s+h} + \nicefrac{\shift}{2})^{-\frac{1}{2}} f \|_{\Wx^{1,p}} \| g \|_{\Lx^{p'}} \\
		\lesssim{} &|h|^\eps \| f \|_{\Lx^{p}} \| g \|_{\Lx^{p'}}.
	\end{align}
	Consequently,
	\begin{align}
		\| (\cB_s + \nicefrac{\shift}{2})^{-\frac{1}{2}} (\cB_{s+h} - \cB_s) (\cB_{s+h} + \nicefrac{\shift}{2})^{-\frac{1}{2}} \|_{\Lx^p \to \Lx^p}
		\lesssim |h|^\eps. &\qedhere
	\end{align}
\end{proof}

\begin{proof}[Proof of Proposition~\ref{Prop: S2 bdd}]
	We have already seen that the bound for $S_2$ follows from the bound for the pseudo-differential operator associated with the symbol $a(\tau, t) = 2\pi i \tau (2\pi i \tau + (\cB_t + \shift))^{-1}$. It was shown in~\cite[Thm.~5]{PS06} that boundedness for such a pseudo-differential operator follows if the symbol $a$ is in $\mathcal{S}^0_{1,0}(\eps, 6, \Lx^p)$ and $w=1$. The condition on $a$ was just verified in Lemma~\ref{Lem: R-Yamazaki}. Dependence of implicit constants becomes apparent from an inspection of the proof. Moreover, the result in~\cite{PS06} extends to $w\in \A_q$, see Remark~\ref{Rem: pseudo weighted}.
\end{proof}

\begin{remark}[Weighted operator-valued pseudo-differential operators]
\label{Rem: pseudo weighted}
	The proof of~\cite[Thm.~5]{PS06} consists of 4 steps: 1) Decomposition of a general symbol into an \enquote{error symbol} and a symbol that is smooth in $s$. 2) Representation of smooth symbols by elementary symbols. 3) Estimate for pseudo-differential operators associated with an error symbol. 4) Estimate for pseudo-differential operators associated with an elementary symbol. Steps 1) and 2) stay, of course, valid. In Step 3), Schur's test is used, but the kernel estimate directly falls under the scope of Lemma~\ref{Lem: weighted convolution}. Finally, in Step 4), a vector-valued Littlewood--Paley decomposition, a vector-valued Mikhlin theorem and $R$-boundedness of Littlewood--Paley operators are used. These ingredients remain true in the weighted setting, see~\cite{MV} or~\cite{Nick}. Finally, Schur's test is used once again, this time with a more complicated kernel bound, which nevertheless can be captured by Lemma~\ref{Lem: weighted convolution}.
\end{remark}

\section{Higher regularity of weak solutions}
\label{Sec: higher regularity}

In this section, we consider a family of operators $\{ \cB_t \}_{0<t<T}$ associated with coefficients $B(t,\cdot) \in \E(\Lambda, \lambda, \eps, M)$. If $p < \nicefrac{d}{\alpha}$, we assume in addition that $B\in \Lt^\infty(\Hx^{\alpha+\eps, \nicefrac{d}{\alpha}})$, otherwise we require $B\in \Lt^\infty(\Contx^{\alpha+\eps})$. Note that we do not require any regularity in time in this section. Provided that the associated problem~\eqref{Eq: shifted non-autonomous equation} admits a solution, we show higher spatial regularity for this solution in Proposition~\ref{Prop: higher regularity}. This is based on a commutator argument that already appeared in~\cite{AE16}. Implicit constants are allowed to depend on $p$, $q$, $[w]_{\A_q}$, $\Lambda$, $\lambda$, $\alpha$, $\eps$, $\shift$, H\"older constants, and dimensions.

Recall the fractional derivative $\partial^\alpha_x$ from Section~\ref{Sec: spaces and weights}. We use the representation of $\partial^\alpha_x$ as a hypersingular integral to show the following commutator estimates.

\begin{lemma}[Commutator estimates]
\label{Lem: commutator estimate}
	Let $p\in (1,\infty)$. Assume that $b$ is a smooth and bounded scalar function on $\R^d$.
	Then the commutator $[\partial_x^\alpha, b] \coloneqq \partial_x^\alpha b - b \partial_x^\alpha$, initially defined on $\Wx^{\alpha,p}$, extends to a bounded operator on $\Lx^p$,
	and satisfies the estimate
	\begin{align}
	\label{Eq: commutator estimate}
		\| [\partial_x^\alpha, b] f \|_{\Lx^p} \lesssim \| b \|_{\Contx^{\alpha+\eps}} \| f \|_{\Lx^p} \qquad (f\in \Wx^{\alpha,p}).
	\end{align}
	Moreover, if $p < \nicefrac{d}{\alpha}$, then, for all $\eps > 0$, there exists a constant $C_\eps > 0$ such that
	\begin{align}
		\| [\partial_x^\alpha, b] f \|_{\Lx^p} \leq C_\eps \| b \|_{\Lx^\infty} \| f\|_{\Lx^p} + \eps \| b \|_{\Hx^{\alpha+\eps, \nicefrac{d}{\alpha}}} \| \partial_x^\alpha f \|_{\Lx^p} \qquad (f\in \Wx^{\alpha,p}).
	\end{align}
\end{lemma}

\begin{proof}
	Observe that, since $[\partial_x^\alpha, b] \colon \Wx^{\alpha,p} \to \Lx^p$ is bounded, it suffices, in virtue of density and Fatou's lemma, to establish~\eqref{Eq: commutator estimate} for $f$ smooth and bounded.

	According to~\cite[Sec.~25.4]{Samko}, the fractional derivative $\partial_x^\alpha$ acts on bounded and smooth functions $g$ as the \emph{hypersingular integral} given for $x\in \R^d$ by
	\begin{align}
		\partial_x^\alpha g(x) = c \int_{\R^d} \frac{g(y)-g(x)}{|y-x|^{d+\alpha}} \d y.
	\end{align}
	We can apply this identity to $f$ and $b f$ in virtue of the assumption on $b$ and the reduction at the beginning of this proof. Consequently, the commutator can be written as
	\begin{align}
		[\partial_x^\alpha, b] f(x) &= c \Bigl(\int_{\R^d} \frac{b(y)f(y)-b(x)f(x)}{|y-x|^{d+\alpha}} \d y - b(x) \int_{\R^d} \frac{f(y)-f(x)}{|y-x|^{d+\alpha}} \d y \Bigr) \\
		&= c \int_{\R^d} \frac{(b(y)-b(x))f(y)}{|y-x|^{d+\alpha}} \d y.
	\end{align}
	Let $h\leq 1$ and split the integral into the regions $|y-x| \leq h$ and $|y-x| \geq h$ to rewrite the latest expression as
	\begin{align}
		c\int_{|y-x| \geq h} \frac{(b(y)-b(x))f(y)}{|y-x|^{d+\alpha}} \d y + c\int_{|y-x| \leq h} \frac{(b(y)-b(x))f(y)}{|y-x|^{d+\alpha}} \d y
		\eqqcolon \I + \II.
	\end{align}
	Use boundedness of $b$ for term $\I$ to estimate $|\I| \lesssim \| b\|_{\Lx^\infty} \bigl(\ind_{|\cdot| \geq h} |\cdot|^{-d-\alpha} \ast |f|\bigr)(x)$. Note that $\| \ind_{|\cdot| \geq h} |\cdot|^{-d-\alpha} \|_1 = h^{-\alpha} \| \ind_{|\cdot| \geq 1} |\cdot|^{-d-\alpha} \|_1 \lesssim h^{-\alpha}$ by scaling.

	\textbf{Part 1}: H\"older coefficients.
	We specify $h=1$. Use H\"older-regularity of $b$ to bound $|\II| \lesssim [ b ]_{\Contx^{\alpha+\eps}} \bigl( \ind_{|\cdot| \leq 1} |\cdot|^{-d+\eps} \ast |f|\bigr)(x)$. The convolution kernel $\ind_{|y| \leq 1} |y|^{-d+\eps}$ is likewise integrable. In summary, Young's convolution inequality yields the claim.

	\textbf{Part 2}: Sobolev coefficients.
	Write $|y-x|^\alpha = |y-x|^{-\nicefrac{\eps}{4}} |y-x|^{\alpha+\nicefrac{\eps}{4}}$, and use H\"older's inequality to estimate
	\begin{align}
		|\II| \leq \Bigl( \int_{|y-x| \leq 1} |y-x|^{-d+\nicefrac{\eps p'}{4}} \d y \Bigr)^\frac{1}{p'} \Bigl( \int_{|y-x| \leq h} \left| \frac{|b(y)-b(x)||f(y)|}{|y-x|^{\alpha+\nicefrac{\eps}{4}}} \right|^p \frac{\d y}{|y-x|^d} \Bigr)^\frac{1}{p}.
	\end{align}
	The first factor is bounded by a constant depending on $d$, $p$, and $\eps$. Using this estimate and the bound for $\I$ in conjunction with Young's convolution inequality yields
	\begin{align}
	\label{Eq: Sobolev commutator estimate}
		\| [\partial_x^\alpha, b] f \|_{\Lx^p} \lesssim h^{-\alpha} \| b \|_{\Lx^\infty} \| f \|_{\Lx^p} + \Bigl( \int_{\R^d} \int_{|y-x| \leq h} \left| \frac{(b(y)-b(x))f(y)}{|y-x|^{\alpha+\nicefrac{\eps}{4}}} \right|^p \frac{\d y \d x}{|y-x|^d} \Bigr)^\frac{1}{p}.
	\end{align}
	Now use H\"older's inequality with $\nicefrac{1}{p} - \nicefrac{\alpha}{d} \eqqcolon \nicefrac{1}{q}$ (observe that $q$ is finite by the assumption $p < \nicefrac{d}{\alpha}$) to bound the second term in~\eqref{Eq: Sobolev commutator estimate} by
	\begin{align}
		\Bigl( \int_{\R^d} \int_{|y-x| \leq h} \left| \frac{b(y)-b(x)}{|y-x|^{\alpha+\nicefrac{\eps}{2}}} \right|^\frac{d}{\alpha} \frac{\d y \d x}{|y-x|^d} \Bigr)^\frac{\alpha}{d} \times \Bigl( \int_{\R^d} \int_{|y-x| \leq h} |f(y)|^q |y-x|^{\nicefrac{q \eps}{4}} \frac{\d y \d x}{|y-x|^d} \Bigr)^\frac{1}{q}.
	\end{align}
	The first factor is dominated by the $\B^{\alpha+\nicefrac{\eps}{2}}_{\nicefrac{d}{\alpha},\nicefrac{d}{\alpha}}$ norm of $b$, which in turn is under control by the $\Hx^{\alpha+\eps, \nicefrac{d}{\alpha}}$ norm of $b$. Furthermore, the second factor is controlled by $h^{\nicefrac{\eps}{4}} \| f \|_{\Lx^q}$ in virtue of Fubini's theorem. Finally, we bound $\| f \|_{\Lx^q} \lesssim \| \partial_x^\alpha f \|_{\Lx^p}$ using boundedness of the fractional integral, see for instance~\cite[Thm.~2.5.2]{Samko}, where we use again the restriction on $p$.

	Plugging everything back into~\eqref{Eq: Sobolev commutator estimate} gives
	\begin{align}
		\| [\partial_x^\alpha, b] f \|_{\Lx^p} \lesssim h^{-\alpha} \| b \|_{\Lx^\infty} \| f \|_{\Lx^p} + h^{\nicefrac{\eps}{4}} \| b \|_{\Hx^{\alpha+\eps, \nicefrac{d}{\alpha}}} \| \partial_x^\alpha f \|_{\Lx^p}.
	\end{align}
	Choosing $h$ sufficiently small gives the claim.
\end{proof}

\begin{proposition}
\label{Prop: higher regularity}
	Given a weak $(p,q)$-solution $u$ of~\eqref{Eq: shifted non-autonomous equation} for some right-hand side $f\in \Lt^q(w;\Lx^p)$, one has higher spatial regularity in the sense $u\in \Lt^q(w;\Wx^{1+\alpha,p})$ together with the estimate
	\begin{align}
	\label{Eq: higher regularity estimate}
		\| u \|_{\Lt^q(w;\Wx^{1+\alpha,p})} \lesssim \| f\|_{\Lt^q(w;\Lx^p)}.
	\end{align}
\end{proposition}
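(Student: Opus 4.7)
The plan is to apply the fractional derivative $\partial_x^\alpha$ to~\eqref{Eq: shifted non-autonomous equation} and exploit that $\partial_x^\alpha$ commutes with $\partial_t$, $\nabla_x$, and $\div_x$ (all being Fourier multipliers). Formally, one obtains
\begin{align}
	(\partial_t + \cB_t + \shift)(\partial_x^\alpha u) = \partial_x^\alpha f + \div_x [\partial_x^\alpha, B(t,\cdot)] \nabla_x u \eqqcolon \tilde f.
\end{align}
Since $\alpha < 1$, we have $\partial_x^\alpha f \in \Lt^q(\Wx^{-\alpha,p}) \hookrightarrow \Lt^q(\Wx^{-1,p})$ with norm $\lesssim \|f\|_{\Lt^q(\Lx^p)}$. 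By Lemma~\ref{Lem: commutator estimate}, the spatial commutator $[\partial_x^\alpha, B(t,\cdot)]$ is $\Lx^p$-bounded with norm controlled by $M$, so $\div_x [\partial_x^\alpha, B(t,\cdot)] \nabla_x u \in \Lt^q(\Wx^{-1,p})$ with norm $\lesssim \|\nabla_x u\|_{\Lt^q(\Lx^p)} \lesssim \|f\|_{\Lt^q(\Lx^p)}$, where the last step uses~\eqref{Eq: MR estimate weak solution} applied to $u$ itself. If one can rigorously identify $\partial_x^\alpha u$ as the weak $(p,q)$-solution of~\eqref{Eq: shifted non-autonomous equation} with right-hand side $\tilde f$, then Theorem~\ref{Thm: weak solutions} yields $\partial_x^\alpha u \in \Lt^q(\Wx^{1,p})$ with bound $\lesssim \|\tilde f\|_{\Lt^q(\Wx^{-1,p})} \lesssim \|f\|_{\Lt^q(\Lx^p)}$, giving~\eqref{Eq: higher regularity estimate}.

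The technical obstruction is that $u$ is only in $\Lt^q(\Wx^{1,p})$, which is insufficient for the manipulations above to make literal sense. I would resolve this via a two-stage approximation. First, mollify $B(t,\cdot)$ in $x$ by convolution with a standard nonnegative mollifier $\rho_\delta$ to obtain $B_\delta(t,\cdot)$; since convolution against an $\L^1$-normalized nonnegative kernel preserves $\Lambda$, $\lambda$, and all Hölder seminorms, $B_\delta(t,\cdot) \in \E(\Lambda,\lambda,\alpha+\eps,M)$ with constants uniform in $\delta$. Let $u_\delta$ be the unique weak $(p,q)$-solution of the corresponding regularized equation with the same $f$; subtracting the equations for $u$ and $u_\delta$ and invoking~\eqref{Eq: MR estimate weak solution} for the difference with right-hand side $\div_x(B-B_\delta)\nabla_x u$ shows $u_\delta \to u$ in $\Lt^q(\Wx^{1,p})$.

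Second, at the regularized level, $B_\delta$ is smooth in $x$, so the Nirenberg difference-quotient method is available: the translate $u_\delta(\cdot,\cdot+he_i)$ solves the translated equation, and applying~\eqref{Eq: MR estimate weak solution} to $\Delta_h^i u_\delta$ with right-hand side involving $\Delta_h^i f$ and $\div_x(\Delta_h^i B_\delta)\nabla_x u_\delta(\cdot+he_i)$ gives uniform bounds in $h$ and upgrades $u_\delta$ to $\Lt^q(\Wx^{2,p})$. This legitimately validates the formal commutator identity, so $\partial_x^\alpha u_\delta$ is the weak $(p,q)$-solution of the regularized problem for the data $\tilde f_\delta \coloneqq \partial_x^\alpha f + \div_x[\partial_x^\alpha, B_\delta(t,\cdot)]\nabla_x u_\delta$. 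Theorem~\ref{Thm: weak solutions} combined with Lemma~\ref{Lem: commutator estimate} (applied with the uniform bound $\|B_\delta\|_{\Cont_x^{\alpha+\eps}} \leq M$) yields
\begin{align}
	\|\partial_x^\alpha u_\delta\|_{\Lt^q(\Wx^{1,p})} \lesssim \|f\|_{\Lt^q(\Lx^p)}
\end{align}
uniformly in $\delta$. A reflexivity and weak-compactness argument, together with $u_\delta \to u$ in $\Lt^q(\Wx^{1,p})$ and the continuity of $\partial_x^\alpha \colon \Lt^q(\Wx^{1,p}) \to \Lt^q(\Wx^{1-\alpha,p})$, identifies the weak limit as $\partial_x^\alpha u$ and transfers the uniform bound to~\eqref{Eq: higher regularity estimate}.

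The main obstacle throughout is the bookkeeping of constants: all estimates delivered by Theorem~\ref{Thm: weak solutions} and Lemma~\ref{Lem: commutator estimate} must depend only on $\Lambda$, $\lambda$, $\alpha$, $\eps$, $M$, $\shift$, and dimensions, and never on the smoothness of $B_\delta$. This is precisely the content of the explicit constant-tracking in Section~\ref{Sec: existence weak solutions}, and it is what makes the two-stage limit feasible; without it, the circular appeal (using the solvability theory to bound $\partial_x^\alpha u_\delta$ while simultaneously sending $\delta \to 0$) would collapse.
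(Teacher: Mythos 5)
Your proposal is correct and follows essentially the same route as the paper: mollify the coefficients in $x$ (preserving the class $\E(\Lambda,\lambda,\alpha+\eps,M)$), use the difference-quotient method to obtain qualitative $\Lt^q(\Wx^{2,p})$ regularity for the regularized solutions, run the commutator argument with Lemma~\ref{Lem: commutator estimate} and Theorem~\ref{Thm: weak solutions} to get the uniform $\Lt^q(\Wx^{1+\alpha,p})$ bound, and pass to the limit. The only cosmetic difference is in the final limit: you derive strong convergence $u_\delta \to u$ in $\Lt^q(\Wx^{1,p})$ from the a priori estimate applied to the difference equation, whereas the paper uses weak compactness plus uniqueness of weak $(p,q)$-solutions; both are valid and rest on the same uniform constants from Theorem~\ref{Thm: weak solutions}.
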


\begin{proof}
	The proof divides into four steps.

	\textbf{Step 1}: Regularization of the equation.
	Let $\rho \in \Cont^\infty_0(\R^d)$ be positive with integral one and define the usual mollifier sequence $\rho_n(x) \coloneqq n^d \rho(n x)$. Put $B_n \coloneqq \rho_n \ast_x B$, where $\ast_x$ denotes convolution in the $x$-variable. One has
	\begin{align}
		B_n(t,x) \xi \cdot \eta = \int_{\R^d} \rho_n(y) B(t,x-y) \xi \cdot \eta \d y \qquad (\xi, \eta \in \C^{dm}),
	\end{align}
	hence $B_n$ is elliptic with the same bounds as $B$. In conjunction with the calculation
	\begin{align}
	\label{Eq: regularized coefficients Hoelder}
		\| B_n(t,x) - B_n(t,y) \| &\leq \int_{\R^d} \rho_n(z) \| B(t,x-z) - B(t,y-z) \| \d z \\
		&\leq M |x-y|^{\eps},
	\end{align}
	this shows that $B_n$ is again in the class $\E(\Lambda, \lambda, \eps, M)$. If $p \geq \nicefrac{d}{\alpha}$, the calculation in~\eqref{Eq: regularized coefficients Hoelder} moreover shows $B_n \in \Lt^\infty(\Contx^{\alpha+\eps})$, where the norm is controlled by $M$. Otherwise, $B_n \in \Lt^\infty(\Hx^{\alpha+\eps, \nicefrac{d}{\alpha}})$, since the Bessel potential commutes with mollification. Similarly to~\eqref{Eq: regularized coefficients Hoelder}, we derive for fixed $n$ using smoothness of $\rho$ that $B_n$ is Lipschitz in the $x$ variable uniformly in $t$.
	Now, according to Theorem~\ref{Thm: weak solutions}, there exist unique weak $(p,q)$-solutions $u_n$ to equation~\eqref{Eq: shifted non-autonomous equation} with $B$ replaced by $B_n$ in the definition of $\cB_t$.

	\textbf{Step 2}: Qualitative higher regularity for solutions of the regularized equations.
	Using the method of difference quotients, we show that the solutions $u_n$ from Step~1 belong to the class $\Lt^q(w;\Wx^{2,p})$. This is a non-quantitative technical necessity to justify certain calculations in Step~3. To keep the notation concise, we will omit the subscript $n$ and simply write $u$ instead of $u_n$ for the solution, and $B$ instead of $B_n$ for the coefficients. We emphasize that, in this step, the only quantitative property of the regularized coefficients that we are going to use is the Lipschitz property in $x$ uniform in $t$.

	For $y\in \R^d$, define the translation operator $S_y$ in the $x$-variable by $f\mapsto f(\cdot + y)$. We extend $S_y$ by pointwise application in $t$ to parabolic spaces like $\Lt^q(w;\Lx^p)$ (for simplicity, we keep writing the symbol $S_y$ for this extension). Then, set for $j=1,\dots,d$ and $h\in \R$ the difference quotient operator $D^j_h u \coloneqq \frac{1}{h}(S_{h \mathrm{e}_j} u - u)$, where $\mathrm{e}_j$ is the $j$th unit vector in $\R^d$. Observe that the operator $D^j_h$ leaves the space of test functions invariant.

	Using the chain rule and translation in the $x$-variable, one gets for $\tt$ fixed, $y\in \R^d$, and $g\in \Wx^{1,p'}$ the identity
	\begin{align}
	\label{Eq: form applied to translation}
	\begin{split}
		&b_\tt(S_{y} u(\tt), g) \\
		={} &\int_{\R^d} B(\tt, x) \nabla u(\tt, x+y) \cdot \overline{\nabla g(x)} \d x \\
		={} &\int_{\R^d} \bigl[ B(\tt,x) - B(\tt, x+y) \bigr] \nabla u(\tt, x+y) \cdot \overline{\nabla g(x)} \d x + b_\tt(u(\tt), S_{-y} g).
	\end{split}
	\end{align}
	Note that $S_{-y}$ is the adjoint of $S_{y}$, and, consequently, $-D^j_{-h}$ is the adjoint of $D^j_h$. Hence, if we plug $D^j_h u$ in \eqref{Eq: weak pq-solution}, and use the adjoint of $D^j_h$ for the first and third, and~\eqref{Eq: form applied to translation} for the second term, we obtain
	\begin{align}
		&\int_0^T -\varphi'(s) (D^j_h u(s) \SP g) + \varphi(s) b_s(D^j_h u(s), g) + \varphi(s) \shift ( D^j_h u(s) \SP g) \d s \\
		={} &\int_0^T \varphi(s) \int_{\R^d} \Bigl( \frac{B(s,x)-B(s,x+h \mathrm{e}_j)}{h} \Bigr) \nabla u(s,x+h \mathrm{e}_j) \cdot \overline{\nabla g(x)} \d x \d s \\
		&-\int_0^T -\varphi'(s) (u(s) \SP D^j_{-h} g ) + \varphi(s) b_s(u(s), D^j_{-h} g ) + \varphi(s) \shift (u(s) \SP D^j_{-h} g ) \d s \\
		\eqqcolon{} &\mathrm{I} + \mathrm{II}.
	\end{align}
	To bound term $\mathrm{II}$, we use first that $u$ is a solution for the right-hand side $f$, followed by the fact that we can estimate the difference quotients of $g$ by $\nabla g$, see for instance~\cite[Sec.~5.8.2.~Thm.~3]{Evans}. So, write
	\begin{gather}
		\int_0^T -\varphi'(s) (u(s) \SP D^j_{-h} g ) + \varphi(s) b_s\bigl(u(s), D^j_{-h} g \bigr) + \varphi(s) \shift (u(s) \SP D^j_{-h} g ) \d s \\
                = \int_0^T \varphi(s) ( f(s) \SP D^j_{-h} g) \d s,
	\end{gather}
	and for $s\in (0,T)$ fixed and all $h\in \R$, estimate the pairing in its integrand by
	\begin{align}
		|(f(s) \SP D^j_{-h} g )| \leq \| f(s) \|_{\Lx^p} \| D^j_{-h} g \|_{\Lx^{p'}} \lesssim \| f(s) \|_{\Lx^p} \| \nabla g \|_{\Lx^{p'}}.
	\end{align}
	Using H\"{o}lder's inequality in the $t$-variable reveals that term $\mathrm{II}$ defines a functional on $\Lt^{q'}(w';\Wx^{1,p'})$ and is thus induced by a function in $\Lt^q(w;\Wx^{-1,p})$, with bound independent of $h$. To treat term~$\mathrm{I}$, use that $B$ is Lipschitz in the $x$-variable uniformly in $s\in (0,T)$, along with H\"{o}lder's inequality and translation invariance of the $\Lx^p$-norm.

	Eventually, we see that $D^j_h u$ is a weak $(p,q)$-solution to some right-hand side in $\Lt^q(w;\Wx^{-1,p})$, where the norm of the right-hand side can be controlled independently of $h$. Consequently, the estimate from Theorem~\ref{Thm: weak solutions} gives
	\begin{align}
	\label{Eq: difference quotients uniformly bounded I}
		\| D^j_h u \|_{\Lt^q(w;\Wx^{1,p})} \lesssim_n \| f \|_{\Lt^q(w;\Lx^p)} \qquad (h\in \R, j=1,\dots,d).
	\end{align}
	With the symbol $\lesssim_n$ we emphasize that the implicit constant here depends on the regularization from Step~1.
	In particular, we deduce from~\eqref{Eq: difference quotients uniformly bounded I} that there is a sequence $(h_n)_n$ of positive numbers such that $h_n$ converges to $0$, and such that $D^j_{h_n} u$ converges to a weak limit point $v\in \Lt^q(w;\Wx^{1,p})$. We claim that, for almost every $s\in (0,T)$, the function $v(s)$ is the $j$th weak derivative in the $x$-variable of $u(s)$. Indeed, it follows from the \enquote{integration by parts}-identity
	\begin{align}
		\int_{\R^d} (D^j_h f) g \d x = - \int_{\R^d} f (D^j_{-h} g) \d x,
	\end{align}
	which is a simple consequence of translation in the integral, that one has, for $\varphi\in \Cont_0^\infty(\R^d)$ and $\psi \in \Cont_0^\infty(0,T)$, the identity
	\begin{align}
		-\int_0^T \int_{\R^d} \partial_j \varphi(x) u(s,x) \d x \; \psi(s) \d s &= -\lim_n \int_0^T \int_{\R^d} D^j_{-h_n} \varphi(x) u(s,x) \d x \; \psi(s) \d s \\
		&= \lim_n \int_0^T \int_{\R^d} \varphi(x) D^j_{h_n} u(s,x) \d x \; \psi(s) \d s.
	\end{align}
	Integration against $\varphi(x)\psi(s)$ gives rise to a functional on $\Lt^q(w;\Lx^p)$, hence weak convergence of $D^j_{h_n} u$ identifies the latest limit with
	\begin{align}
		\int_0^T \int_{\R^d} \varphi(x) v(s,x) \d x \; \psi(s) \d s.
	\end{align}
	Finally, the fundamental lemma of the calculus of variations shows
	\begin{align}
		- \int_{\R^d} \partial_j \varphi(x) u(s,x) \d x  = \int_{\R^d} \varphi(x) v(s,x) \d x \qquad \text{for almost every } s\in (0,T),
	\end{align}
	which reveals $\partial_j u(s,x) = v(s,x)$ for almost every $s\in (0,T)$ and $j=1,\dots,d$. But as $v\in \Lt^q(w;\Wx^{1,p})$, the lifting property for Sobolev spaces shows $u\in \Lt^q(w;\Wx^{2,p})$.

	\textbf{Step 3}: Uniform bounds using a commutator argument.
	The current step is the essence of this proof, filling in the details of the heuristic given in the roadmap in Section~\ref{Subsec: Roadmap}. As in Step~2, we continue to work with the regularized coefficients, but still omit the subscript $n$ in the notation. However, now we will also rely on the properties established in Step~1 that are uniform in $n$.

	Note that $\partial_x^\alpha$ commutes with $\nabla_x$ and $\partial_t$, where the former fact is a consequence of its definition as a Fourier multiplier.

	Our goal is to show that $\partial_x^\alpha u$ is a weak $(p,q)$-solution to some admissible right-hand side. Note that $\partial_x^\alpha u \in \Lt^q(w;\Wx^{1,p})$, owing to the higher spatial regularity of $u$ established in Step~2, which allows us to plug this term into the equation. That being said, calculate
	\begin{gather}
		\int_0^T - \varphi'(s) ( \partial_x^\alpha u(s) \SP g ) + \varphi(s) b_s(\partial_x^\alpha u(s), g) + \varphi(s) \shift ( \partial_x^\alpha u(s) \SP g )\d s \\
		= \int_0^T - \varphi'(s) ( u(s) \SP \partial_x^\alpha g ) + \varphi(s) b_s(u(s), \partial_x^\alpha g) + \varphi(s) \shift ( u(s) \SP \partial_x^\alpha g ) \d s \\
		+ \int_0^T \varphi(s) \bigl[ b_s(\partial_x^\alpha u(s), g) - b_s(u(s), \partial_x^\alpha g) \bigr] \d s.
	\end{gather}
	Note that $\partial^\alpha_x g \in \Wx^{1,p'}$ since $g$ is smooth and compactly supported. Hence, in the light of Remark~\ref{Rem: weak solution}~\ref{Item: more general g}, use the equation for $u$, and expand the definition of $b_s$, to rewrite the last expression as
	\begin{gather}
		\int\limits_0^T \varphi(s) ( f(s) \SP \partial_x^\alpha g ) \d s + \int\limits_0^T \varphi(s) \int\limits_{\R^d} B(s,x) \nabla \partial_x^\alpha u(s) \cdot \overline{\nabla g} - B(s,x) \nabla u(s) \cdot \overline{\nabla \partial_x^\alpha g} \d x \d s \\
		\eqqcolon \mathrm{I} + \mathrm{II}.
	\end{gather}
	We have to check that the terms $\mathrm{I}$ and $\mathrm{II}$ are induced by right-hand sides in $\Lt^q(w;\Wx^{-1,p})$. For term~$\mathrm{I}$, this is a direct consequence of the mapping properties of $\partial_x^\alpha$ described in Definition~\ref{Def: fractional derivative}, and the $\Lt^q(w;\Wx^{-1,p})$-norm can be controlled by $\| f\|_{\Lt^q(w;\Lx^p)}$.

	Let us proceed with term~$\mathrm{II}$. Keep in mind that $B(s,x)$ is Lipschitz in $x$, and thus is a multiplier on $\Wx^{1,p}$. We use this fact and higher regularity of $u$ from Step~2 to commute $\partial_x^\alpha$ with $\nabla_x$ to rewrite the integral over $\R^d$ in $\mathrm{II}$ as
	\begin{align}
		\int_{\R^d} B(s,x) \nabla \partial_x^\alpha u(s) \cdot \overline{\nabla g} - B(s,x) \nabla u(s) \cdot \overline{\nabla \partial_x^\alpha g} \d x = \int_{\R^d} \Bigl[ B(s,x), \partial_x^\alpha \Bigr] \nabla u(s) \cdot \overline{\nabla g} \d x.
	\end{align}
	Now, we apply the commutator estimates from Lemma~\ref{Lem: commutator estimate} for all times $s$. We only present the case $p < \nicefrac{d}{\alpha}$, the other case is even easier. Keep in mind that $B(s, \cdot)$ is smooth and bounded by the regularization in Step~1. Hence, the latter part of Lemma~\ref{Lem: commutator estimate} along with H\"{o}lder's inequality show that term $\mathrm{II}$ is induced by an $\Lt^q(w;\Wx^{-1,p})$ function as well. This time, the $\Lt^q(w;\Wx^{-1,p})$-norm is controlled by $C_\eps \| u \|_{\Lt^q(w;\Wx^{1,p})} + \eps \| \partial_x^\alpha u \|_{\Lt^q(w;\Wx^{1,p})}$ for any $\eps > 0$, where implicit constants depend on the Sobolev regularity of the coefficients, which is also under control by Step~1. Observe that we have used once more that $\partial_x^\alpha$ and $\nabla$ commute.

	Under the line, Theorem~\ref{Thm: weak solutions} gives $\partial_x^\alpha u \in \Lt^q(w;\Wx^{1,p})$ with estimate
	\begin{align}
	\label{Eq: final bound from Step 3 in higher regularity}
		\| \partial_x^\alpha u \|_{\Lt^q(w;\Wx^{1,p})} \lesssim \| f\|_{\Lt^q(w;\Lx^p)} + C_\eps \| u \|_{\Lt^q(w;\Wx^{1,p})} + \eps \| \partial_x^\alpha u \|_{\Lt^q(w;\Wx^{1,p})}.
	\end{align}
	Choosing $\eps$ sufficiently small, we can absorb the term $\eps \| \partial_x^\alpha u \|_{\Lt^q(w;\Wx^{1,p})}$ into the left-hand side. Finally, apply Theorem~\ref{Thm: weak solutions} once more, but this time for $u$ instead of $\partial_x^\alpha u$, to deduce
	\begin{align}
		\| u \|_{\Lt^q(w;\Wx^{1+\alpha,p})} &\lesssim \| u \|_{\Lt^q(w;\Lx^{p})} + \| \partial_x^\alpha u \|_{\Lt^q(w;\Wx^{1,p})} \\
		&\lesssim \| f\|_{\Lt^q(w;\Lx^p)} + C_\eps \| u \|_{\Lt^q(w;\Wx^{1,p})} \\
		&\lesssim \| f\|_{\Lt^q(w;\Lx^p)}.
	\end{align}

	\textbf{Step 4}: Taking the limit in Step 1.
	The solutions $u_n$ to the regularized equations from Step~1 satisfy the identity
	\begin{align}
	\begin{split}
	\label{Eq: Weak formulation for limit argument}
		&\int\limits_0^T \varphi'(s) ( u_n(s) \SP g ) + \varphi(s) ( f(s) \SP g ) - \varphi(s) \shift ( u_n(s) \SP g ) \d s \\
		={} &\int\limits_0^T \varphi(s) \int\limits_{\R^d} B_n(s,x) \nabla u_n(s) \cdot \overline{\nabla g} \d x \d s.
	\end{split}
	\end{align}
	Moreover, we have seen in Step~3 that $\| u_n \|_{\Lt^q(w;\Wx^{1+\alpha,p})} \lesssim \|f\|_{\Lt^q(w;\Lx^p)}$ holds uniformly in $n$. Since $p$ and $q$ are in the reflexive range, we find a subsequence (which we still denote by $u_n$) for which $u_n$ and $\nabla u_n$ converge weakly in $\Lt^q(w;\Lx^p)$ to some limit $v\in \Lt^q(w;\Wx^{1+\alpha,p})$. Moreover, $\| v \|_{\Lt^q(w;\Wx^{1+\alpha,p})} \lesssim \|f\|_{\Lt^q(w;\Lx^p)}$. The former fact directly enables us to pass to the limit
	\begin{align}
		\int\limits_0^T \varphi'(s) ( v(s) \SP g ) + \varphi(s) ( f(s) \SP g ) + \varphi(s) \shift ( v(s) \SP g ) \d s
	\end{align}
	on the left-hand side of~\eqref{Eq: Weak formulation for limit argument}. For the right-hand side, write
	\begin{align}
		\int\limits_0^T \varphi(s) \int\limits_{\R^d} B_n(s,x) \nabla u_n(s,x) \cdot \overline{\nabla g(x)} \d x \d s = \int\limits_0^T \varphi(s) \int\limits_{\R^d} \nabla u_n(s,x) \cdot \overline{B_n(s,x)^* \nabla g(x)} \d x \d s.
	\end{align}
	Clearly, $B_n(s,x)^*$ is uniformly bounded, and, by $\Contx^\eps$ regularity of $B$, one has $B_n(s,x)^* \to B(s,x)^*$ pointwise. Hence, the dominated convergence theorem gives $\varphi(s) B_n(s,x)^* \nabla g(x) \to \varphi(s) B(s,x)^* \nabla g(x)$ strongly in $\Lt^{q'}(w';\Lx^{p'})$. Hence, the right-hand side of~\eqref{Eq: Weak formulation for limit argument} converges to
	\begin{align}
		\int\limits_0^T \varphi(s) \int\limits_{\R^d} B(s,x) \nabla v(s,x) \cdot \overline{\nabla g(x)} \d x \d s.
	\end{align}
	In summary, taking the limit in~\eqref{Eq: Weak formulation for limit argument} results in
	\begin{gather}
		\int\limits_0^T \varphi'(s) ( v(s) \SP g ) + \varphi(s) ( f(s) \SP g ) - \varphi(s) \shift ( v(s) \SP g ) \d s \\
		= \int\limits_0^T \varphi(s) \int\limits_{\R^d} B(s,x) \nabla v(s,x) \cdot \overline{\nabla g(x)} \d x \d s.
	\end{gather}
	This shows that $u$ and $v$ solve the same equation. Uniqueness of solutions leads to $u = v \in \Lt^q(w;\Wx^{1+\alpha,p})$ as desired. The corresponding estimate was already mentioned above.
\end{proof}

\begin{remark}
\label{Rem: Commutation reason for whole space}
	In Step~3, we have used that the fractional derivative can be written as a Fourier multiplier, and hence commutes with $\nabla$. This is the central reason that ties us to the whole-space in the $x$ variable.
	Moreover, the limiting argument in Step~4 relies on the control of the implied constants from Theorem~\ref{Thm: weak solutions}.
\end{remark}

\section{Proof of Theorem~\ref{Thm: main result}}
\label{Sec: proof main result}

Following the plan outlined in the roadmap in Section~\ref{Subsec: Roadmap} we assemble the results from the previous sections to prove Theorem~\ref{Thm: main result}.

\begin{proof}[Proof of Theorem~\ref{Thm: main result}]
	Let $f\in \Lt^q(w;\Lx^p)$. In virtue of Remark~\ref{Rem: weak solution}~\ref{Item: shift equation}, we consider the shifted problem~\eqref{Eq: shifted non-autonomous equation} with $\cB_t = \cL_t$ instead of~\eqref{Eq: non-autonomous equation}. Let $u$ be its unique $(p,q)$-solution from Theorem~\ref{Thm: weak solutions}. We want to show $\cL_t u(t) \in \Lt^q(w;\Lx^p)$ with estimate against $\|f\|_{\Lt^q(w;\Lx^p)}$. This happens in three steps.

	\textbf{Step~1}: Reduction to right-hand sides in~$\Cont_0^\infty(\Lx^p\cap \Lx^2)$.
	Let $(f_n)_n$ be a sequence in $\Cont_0^\infty(\Lx^p\cap \Lx^2)$ that converges to $f$ in $\Lt^q(w;\Lx^p)$. Let $u_n$ be the weak $(p,q)$-solution of~\eqref{Eq: shifted non-autonomous equation} with $\cB_t = \cL_t$ and right-hand side $f_n$ provided by Theorem~\ref{Thm: weak solutions}. Suppose the maximal regularity estimate
	\begin{align}
	\label{Eq: MR estimate for aproximation}
		\| \cL_t u_n(t) \|_{\Lt^q(w;\Lx^p)} \lesssim \| f_n \|_{\Lt^q(w;\Lx^p)}
	\end{align}
	with implicit constant independent of $n$. In the sequel, we allow tacitly passing to subsequences, even without changing the notation. Since the $u_n$ are weak $(p,q)$-solutions, arguing as in Step~4 of the proof of Proposition~\ref{Prop: higher regularity}, we see that $u_n$ converges weakly to $u$ in $\Lt^q(w;\Lx^p)$ and that $\nabla u_n$ converges weakly to $\nabla u$ in $\Lt^q(w;\Lx^p)$. Let $v\in \Lt^{q'}(w';\Lx^{p'}) \cap \Lt^2(\Lx^2)$. Then, in particular, $(\cL_\tt u_n(\tt) \SP v(\tt)) \to (\cL_\tt u(\tt) \SP v(\tt))$ for almost every $\tt$. Consequently, we find by Fatou's lemma and~\eqref{Eq: MR estimate for aproximation} that
	\begin{align}
		\Bigl| \int_0^T \langle \cL_t u(t), v(t) \rangle \d t \Bigr| &\leq \liminf_n \Bigl| \int_0^T \langle \cL_t u_n(t), v(t) \rangle \d t \Bigr| \\
		&\lesssim \liminf_n \| f_n \|_{\Lt^q(\Lx^p)} \| v \|_{\Lt^{q'}(w;\Lx^{p'})} \\
		&= \| f \|_{\Lt^q(w;\Lx^p)} \| v \|_{\Lt^{q'}(w';\Lx^{p'})}.
	\end{align}
	Hence, duality yields $\| \cL_t u(t) \|_{\Lt^q(w;\Lx^p)} \lesssim \| f \|_{\Lt^q(w;\Lx^p)}$, provided we can show~\eqref{Eq: MR estimate for aproximation}.

	\textbf{Step~2}: Treating the first term in~\eqref{Eq: representation formula}.
	We write $u$ and $f$ instead of $u_n$ and $f_n$ for this part to emphasize that this step does not rely on the regularization of the right-hand side.
	Let $v\in \Lt^{q'}(w';\Lx^{p'}) \cap \Lt^2(\Lx^2)$. We aim to estimate $S_1$ by duality. To this end, write
	\begin{align}
	\label{Eq: S1 duality estimate start}
	\begin{split}
		&\Bigl| \int_0^T \int_0^t ((\cL_t+\shift) \e^{-(t-s) (\cL_t + \shift)} (\cL_t - \cL_s) u(s) \SP v(t)) \d s \d t \Bigr| \\
		={} &\Bigl| \int_0^T \int_0^t (u(s) \SP \bigl( (\cL_t+\shift) \e^{-(t-s) (\cL_t + \shift)} (\cL_t - \cL_s) \bigr)^* v(t)) \d s \d t \Bigr|.
	\end{split}
	\end{align}
	For $t$ and $s$ fixed, the operator $(\cL_t+\shift) \e^{-(t-s) (\cL_t + \shift)} (\cL_t - \cL_s)$ maps $\Wx^{1+\alpha,p} \to \Lx^p$ with norm controlled by $|t-s|^{-1+\eps}$ as combining Lemmas~\ref{Lem: estimate kernel of S1 part 1} and~\ref{Lem: estimate kernel of S1 part 2} shows. Consequently, its adjoint maps $\Lx^{p'} \to \Wx^{-1-\alpha,p'}$ with the same bound. Use this together with the $\Wx^{1+\alpha,p}$--$\Wx^{-1-\alpha, p'}$ duality pairing in~\eqref{Eq: S1 duality estimate start} to bound its right-hand side by
	\begin{align}
		&\int_0^T \int_0^t \| u(s) \|_{\Wx^{1+\alpha,p}} \| \bigl( (\cL_t+\shift) \e^{-(t-s) (\cL_t + \shift)} (\cL_t - \cL_s) \bigr)^* v(t) \|_{\Wx^{-1-\alpha,p'}} \d s \d t \\
		\lesssim{} &\int_0^T \int_0^t \| u(s) \|_{\Wx^{1+\alpha,p}} |t-s|^{-1+\eps} \| v(t) \|_{\Lx^{p'}} \d s \d t.
	\end{align}
	By H\"older's inequality, this can be bounded by
	\begin{align}
		\label{Eq: velo}
		\Bigl\| \int_0^t |t-s|^{-1+\eps} \| u(s) \|_{\Wx^{1+\alpha,p}} \d s  \Bigr\|_{\Lt^q(w)} \| v(t) \|_{\Lt^{q'}(w';\Lx^{p'})}.
	\end{align}
	By Lemma~\ref{Lem: weighted convolution} (the convolution kernel $s\mapsto |s|^{-1+\eps}$ is radial, decreasing, and integrable over $(0,T)$) and Proposition~\ref{Prop: higher regularity}, control~\eqref{Eq: velo} by $\| u \|_{\Lt^q(w;\Wx^{1+\alpha,p})} \| v \|_{\Lt^{q'}(w';\Lx^{p'})} \lesssim \| f \|_{\Lt^q(w;\Lx^p)} \| v \|_{\Lt^{q'}(w';\Lx^{p'})}$.
	Hence, duality shows in summary
	\begin{align}
	\label{Eq: S1 estimate}
		\| \int_0^t (\cL_t+\shift) \e^{-(t-s) (\cL_t + \shift)} (\cL_t - \cL_s) u(s) \d s \|_{\Lt^q(w;\Lx^p)} \lesssim \| f \|_{\Lt^q(w;\Lx^p)}.
	\end{align}
	In particular, the above calculation (applied with $v$ constant) shows that $$\int_0^t \| (\cL_t+\shift) \e^{-(t-s) (\cL_t + \shift)} (\cL_t - \cL_s) u(s) \|_{\Lx^p} \d s < \infty \qquad \text{for almost every }t \in (0,T).$$
	Whence, Hille's theorem shows
	\begin{align}
		(\cL_t+\shift) \int_0^t \e^{-(t-s) (\cL_t + \shift)} (\cL_t - \cL_s) u(s) \d s = S_1(u)(t),
	\end{align}
	so that~\eqref{Eq: S1 estimate} translates to
	\begin{align}
		\| S_1(u) \|_{\Lt^q(w;\Lx^p)} \lesssim \| f \|_{\Lt^q(w;\Lx^p)}.
	\end{align}

	\textbf{Step~3}: Treating the second term in~\eqref{Eq: representation formula}.
	Thanks to the reduction to more regular right-hand sides in Step~1, Proposition~\ref{Prop: S2 bdd} directly yields $\| S_2(f_n) \|_{\Lt^q(w;\Lx^p)} \lesssim \| f_n \|_{\Lt^q(w;\Lx^p)} \lesssim \| f \|_{\Lt^q(w;\Lx^p)}$.

	In summary, Steps~2 and~3 in conjunction with Theorem~\ref{Thm: weak solutions} give
	\begin{align}
		\| \cL_t u_n(t) \|_{\Lt^q(w;\Lx^p)} \lesssim \| S_1(u_n) \|_{\Lt^q(w;\Lx^p)} + \| S_2(f_n) \|_{\Lt^q(w;\Lx^p)} + \shift \| u_n \|_{\Lt^q(w;\Lx^p)} \lesssim \| f \|_{\Lt^q(w;\Lx^p)},
	\end{align}
	which is~\eqref{Eq: MR estimate for aproximation}. Hence, $\| \cL_t u(t) \|_{\Lt^q(w;\Lx^p)} \lesssim \| f \|_{\Lt^q(w;\Lx^p)}$ as was demonstrated in Step~1. This completes the proof.
\end{proof}

\nocite{*}

\end{document}